\definecolor{red}{rgb}{1.0,0.0,0.0}
\definecolor{blu}{rgb}{0.0,0.0,1.0}
\definecolor{gre}{rgb}{0.03,0.50,0.03}
\newtheorem{lemma}{Lemma}[section]
\newtheorem{theorem}[lemma]{Theorem}
\newtheorem{proposition}[lemma]{Proposition}
\newtheorem{corollary}[lemma]{Corollary}
\newtheorem{definition}[lemma]{Definition}
\newtheorem{problem}[lemma]{Problem}
\newtheorem{remark}[lemma]{Remark}
\newtheorem{hypothesis}[lemma]{Hypothesis}
\newtheorem{notation}[lemma]{Notation}
\def\sqr#1#2{{\vcenter{\vbox{\hrule height .#2pt \hbox{\vrule
 width .#2pt height#1pt \kern#1pt \vrule
width .#2pt} \hrule height .#2pt}}}}
\def\ds{\begin{displaystyle}}
\def\eds{\end{displaystyle}}
\def\<{\left\langle }
\def\>{\right\rangle }
\def\R{\mathbb R}
\def\N{\mathbb N}
\def\E{\mathbb E}
\def\P{\mathbb P}
\def\calb{{\cal B}}
\def\cald{{\cal D}}
\def\calf{{\cal F}}
\def\calh{{\cal H}}
\def\1{\mathbf 1}
\def\to{\rightarrow}
\begin{document}

\title{\bf Optimal portfolio choice with path dependent labor income:
the infinite horizon case}
\author{Enrico Biffis\footnote{Biffis (\texttt{e.biffis@imperial.ac.uk}) is at the Department of Finance, Imperial College Business School, London SW7 2AZ, UK.}
\and Fausto Gozzi$^{\dagger}$ \and
Cecilia Prosdocimi\footnote{Gozzi (\texttt{f.gozzi@luiss.it}) and Prosdocimi (\texttt{c.prosdocimi@luiss.it}) are at the
 Dipartimento di Economia e Finanza,
 Libera Universit\`{a}  Internazionale degli Studi Sociali  "Guido Carli", Rome, Italy.}
}

\date{\today}

\maketitle

\begin{abstract}
%
%
\noindent We consider an infinite horizon portfolio problem with borrowing constraints, in which an agent receives labor income which adjusts to financial market shocks in a path dependent way. This path-dependency is the novelty of the model, and leads to an infinite dimensional stochastic optimal control problem. We solve the problem completely, and find explicitly the optimal controls in feedback form. This is possible because we are able to find an explicit solution to the associated infinite dimensional Hamilton-Jacobi-Bellman (HJB) equation, even if state constraints are present. To the best of our knowledge, this is the first infinite dimensional generalization of Merton's optimal portfolio problem for which explicit solutions can be found. The explicit solution allows us to study the properties of optimal strategies
and discuss their financial implications.


\end{abstract}


\bigskip

\bigskip

\textbf{Key words}:
Stochastic functional (delay) differential equations;
Optimal control problems in infinite dimension with state constraints;
Second order Hamilton-Jacobi-Bellman equations in infinite dimension;
Verification theorems and optimal feedback controls;
Life-cycle optimal portfolio with labor income;
Wages with path dependent dynamics (sticky).

\bigskip

\bigskip

\noindent

\textbf{AMS classification}:
34K50 (Stochastic functional-differential equations),
93E20 (Optimal stochastic control),
49L20 (Dynamic programming method),
35R15 (Partial differential equations on infinite-dimensional spaces),
91G10 (Portfolio theory),
91G80 (Financial applications of other theories (stochastic control, calculus of variations, PDE, SPDE, dynamical systems))

\newpage

\tableofcontents

\section{Introduction}
We consider the life-cycle optimal portfolio choice problem faced by an agent receiving labor income and allocating her wealth to risky assets and a riskless bond subject to a borrowing constraint.
The main novelty of the model is that the dynamics of labor income is path dependent, in line
with the empirical literature showing that wages adjust slowly to financial market shocks, and
income shocks have modest persistency when individuals can learn about their earning potential.
The resulting optimal control problem is infinite dimensional, and can be seen as an infinite dimensional generalization of Merton's  optimal portfolio problem.

The problem entails maximization of the expected power utility from lifetime consumption and bequest, subject to a linear state equation containing delay, as well as a state constraint, which is well known to make the problem considerably harder to solve. We are nonetheless able to exploit the structure of the model to solve it completely, and obtain
the optimal controls in feedback form (Theorem~\ref{TEO_MAIN_INF_RET}), thus allowing us to fully understand the economic implications of the setting.
To the best of our knowledge, the model presented here offers the first infinite dimensional generalization of the explicit solution to Merton's optimal portfolio problem.

Solving the problem is possible because we are able to find an explicit solution
(which we call $v$) of the associated infinite dimensional HJB equation, even if state constraints are present (Proposition~\ref{PROP_COMPARISON_FINITENESS_VAL_FUN_INF_RET}). Availability of the explicit solution, however, is not the end of the story, as proving that $v$ is indeed the actual value function and finding the feedback map (Theorems \ref{th:VERIFICATION_THEOREM_INF_RET} and \ref{th:VERIFICATIONgamma>1}) require considerable technical work. 
The solution strategy developed in this paper can be used to solve other
types of problems with structure similar to the one considered here.
As such structure arises naturally in finite dimensional economic and financial
models, we think that our solution method could open the way to solving
infinite dimensional generalizations of several interesting  models.

Our interest in path-dependent labor income dynamics
originates from at least three strands of literature addressing the empirical evidence on lifecycle consumption and portfolio decisions with stochastic
labor income.
First, a common approach used to model the stochastic component of the income process is to use auto-regressive moving average (ARMA) processes (e.g., \cite{MaCurdy_1982}, \cite{ABOWD_CARD_1989}, \cite{MEGHIR_PISTAFERRI_2004}),
and several authors have shown that a parsimonious AR(1) process provides a good description of wage dynamics (see, e.g., \cite{HUBBARD_SKINNER_ZELDES_1995},\cite{MOFFITT_GOTTSCHALK_2002},
\cite{STORESLETTEN_TELMER_YARON_2004},\cite{GUVENEN_AER}).
As demonstrated by \cite{REISS_2002}, \cite{LORENZ_2006}, and
\cite{DUNSMUIR_GOLDYS_TRAN_2016}, stochastic delay
differential equations (SDDEs) can be understood, in some cases, as the weak limit of discrete time
ARMA processes: it would therefore seem natural to extend the continuous time
Merton's model to include a labor income process with a delayed dynamics
which is simple enough to deliver closed form solutions.
Second, as discussed in \cite{GUVENEN_AER}-\cite{GUVENEN_RED}, shocks in labor income
have modest persistence when heterogeneity in income growth rates is taken
into account. In particular, \cite{GUVENEN_AER} shows that allowing individuals
to learn about their income growth rate in a Bayesian way
can match several features of consumption data.
As is well known, bounded rationality and rational inattention can support the use of
moving averages instead of an optimal filter (e.g., \cite{ZHU-ZHOU}), which is exactly
what our path dependent labor income dynamics can deliver, while retaining tractability
and offering explicit solutions to the  portfolio optimization problem.
Finally,  the empirical evidence on wage rigidity (e.g., \cite{KHAN_1997}, \cite{DICKENS_ET_AL_2007},  and \cite{LEBIHAN_ET_AL_2012}, among others)
suggests that delayed dynamics may represent a very tractable way of modelling wages that adjust slowly to financial market shocks (e.g., \cite{DYBVIG_LIU_JET_2010}, section~6).
Although the model solved here is infinite horizon, it is apparent that
our findings would provide important insights in settings where an agent can retire, due to the growing relative importance of the past vs. future component of human capital as the retirement date approaches. This problem will be addressed in future work.


The structure of the paper is as follows. In the next Section \ref{Problem formulation}, we
outline the model and provide the economic motivation for the setting.
In Section~3, we rewrite the state equation (Subsection~3.1) by exploiting the representation of human wealth provided in \cite{BGP}, as well as the relevant state constraints (Subsection~3.2), both in an infinite dimensional setting where the states are Markovian.
In Section~4, the core of the paper, we solve the problem explicitly after recalling the infinite dimensional formulation (Subsection~4.1):
\begin{itemize}
  \item In Subsection~4.2, we find the explicit solution of the associated HJB equation.
  \item In Subsection~4.3, we provide a lemma to understand what happens to admissible strategies when the boundary of the constraint set is reached, a key feature in dealing with state constraints problems.
  \item In Subsections~4.4-4.5, we prove the fundamental identity and the verification theorem, which allow us to find the optimal strategies in feedback form. Here, we pay special attention to the case of risk aversion coefficient $\gamma >1$, which involves some technical complications relative to the more standard case of $\gamma \in (0,1)$.
\end{itemize}
Finally, Section~5 summarizes the main results of the paper, which are collected in
Theorem~\ref{TEO_MAIN_INF_RET}, and discusses the implications for optimal portfolio choice, as well as possible extensions of the model.

\section{Problem formulation}\label{Problem formulation}

\color{black}

Consider a filtered probability space $(\Omega, \mathcal F, \mathbb F, \mathbb P)$, where we define the $\mathbb F$-adapted vector valued process $(S_0,S)$ representing the price evolution of a riskless asset, $S_0$, and $n$ risky assets, $S=(S_1,\ldots,S_n)^\top$, with dynamics
\begin{eqnarray}\label{DYNAMIC_MARKET}
\left\{\begin{array}{ll}
dS_0(t)= S_0(t) r  dt\\
dS(t) =\text{diag}(S(t)) \left(\mu dt + \sigma dZ(t)\right)\\
S_0(0)=1\\
S(0)\in {\mathbb R}^n_{+},
\end{array}
\right.
\end{eqnarray}
where we assume the following.
\begin{hypothesis}\label{hp:S}
\begin{itemize}
  \item[]
  \item[(i)]
$Z$ is a $n$-dimensional Brownian motion.
{The filtration $\mathbb F=(\mathcal F_t)_{t \ge 0}$, is the one generated by $Z$, augmented with the $\P$-null sets.}

  \item[(ii)] $\mu \in \mathbb R^n$, and the matrix $\sigma \in  \mathbb R^n \times  \mathbb R^n $ is invertible.
\end{itemize}
\end{hypothesis}


An agent is endowed with initial wealth $w\ge 0$, and receives
labor income $y$ until the {\color{black} random} time $\tau_{\delta}>0$, which represents the agent's time of death (see the discussion in Section~\ref{SE:DISCUSSION} for possible extensions).
We assume the following.
\begin{hypothesis}\label{hp:tau}
\begin{itemize}
  \item[]
  \item[(i)]
$\tau_{\delta}$ is independent of $Z$, and it has exponential law with parameter $\delta >0$.

  \item[(ii)] The reference filtration is accordingly
given by the enlarged filtration {\color{black} $\mathbb G := \big( \mathcal G_t \big)_{t \ge 0}$,}
where each sigma-field {\color{black} $\mathcal G_t$}
is defined as
\begin{equation*}\
	{\color{black} \mathcal G_t}:= \cap_{u>t} \left(\mathcal F_u \vee  \sigma_g\left(\tau_{\delta}\wedge u\right)\right),
\end{equation*}
 and augmented with the $\mathbb P$-null sets. Here by $\sigma_g (U)$ we denote the sigma-field generated by the random variable $U$.
\end{itemize}
\end{hypothesis}
Note that, with the above choice, {\color{black} $\mathbb G$} is the minimal enlargement of the Brownian filtration satisfying the usual assumptions and making $\tau_\delta$ a stopping time {\color{black}
(see \cite[Section VI.3, p.370]{Protter} or \cite[Section 7.3.3, p.420]{JYC}).
Moreover, see \cite[Proposition 2.11-(b)]{AKSAMITJEANBLANC17}, we have the following result.
If a process $A$ is ${\mathbb G}$-predictable then there exists a process $a$
which is ${\mathbb F}$-predictable and such that
\begin{equation}\label{eq:GFpred}
A(s,\omega) =  a(s,\omega),\qquad
\forall \omega\in \Omega,\; \forall s \in [0,\tau_\delta(\omega)]
\end{equation}
We will therefore introduce the problem relative to the larger filtration $\mathbb G$, and then solve it by first working with pre-death processes (i.e. $\mathbb F$-predictable processes associated with $\mathbb G$-predictable processes as in \eqref{eq:GFpred}) and then finally expressing our results in terms of the original filtration $\mathbb G$ in Section~6.

\color{black}
{During her lifetime,} the agent can invest her resources in the riskless and risky assets, and can consume her wealth $W(t)$ at rate $c(t)\geq 0$. We denote by $\theta(t)\in \mathbb R^n$ the amounts allocated to the risky assets at each time $t\geq 0$. The agent can also purchase life insurance to reach a bequest target $B(\tau_\delta)$ at death, where $B(\cdot)\geq 0$ is also chosen by the agent. We let the agent pay an insurance premium of amount $\delta(B(t)-W(t))$ to purchase coverage of face value
$B(t)-W(t)$ for $t<\tau_\delta$. As in \cite{DYBVIG_LIU_JET_2010}, we interpret a negative face value $B(t)-W(t)<0$ as a  life annuity trading wealth at death for a positive income flow $\delta(W(t)-B(t))$ while living.
{We assume the pre-death controls $(c,B,\theta)$
to live in}
	\begin{eqnarray}\label{DEF_PI0_FIRST_DEFINITION}
	\Pi^0
	:= & \Bigg\{ \mathbb F-\mbox{predictable}
	\ c(\cdot), B(\cdot), \theta(\cdot), \ \mbox{such that:} \ c(\cdot), B(\cdot) \in L^1 (\Omega \times [0, +\infty);
	\mathbb R_{+});
	\\
	\nonumber
	&\theta(\cdot) \in L^2(\Omega \times \mathbb R; 
	\mathbb R^n)\Bigg\}.
	\end{eqnarray}
{Again using \cite[Proposition 2.11-(b)]{AKSAMITJEANBLANC17}, we see that the processes
\begin{equation}\label{eq:barproc}
\overline c(t)=1_{\tau_\delta \ge t} c(t),
\quad
\overline \theta(t)=1_{\tau_\delta \ge t} \theta(t),
\quad
\overline B(t)=1_{\tau_\delta \ge t} B(t),
\quad
\overline W(t)=1_{\tau_\delta \ge t} W(t)
\end{equation}
are all $\mathbb G$-predictable processes (we say that $c(\cdot),B(\cdot),\theta(\cdot),W(\cdot)$ are their pre-death counterparts), so that
the controls $(\overline c, \overline B,\overline \theta)$ live in
\begin{eqnarray}\label{DEF_PI0_FIRST_DEFINITION-G}
\overline\Pi^0
:= & \Bigg\{\mathbb G-\mbox{predictable}
\ \overline c(\cdot), \overline B(\cdot), \overline \theta(\cdot), \ \mbox{such that:} \ \overline c(\cdot), \overline B(\cdot) \in L^1 (\Omega \times [0, +\infty);
\mathbb R_{+});
\\
\nonumber
&\overline \theta(\cdot) \in L^2(\Omega \times \mathbb R; 
\mathbb R^n)\Bigg\}.
\end{eqnarray}
}
{The agent's pre-death wealth $W$ is assumed to obey the following dynamics,
\begin{align}\label{DYNAMICS_WEALTH_LABOR_INCOME-G}
\begin{split}
\left\{\begin{array}{ll}
dW(t) = & \left[W(t) r + \theta(t)^\top (\mu-r\mathbf{1})
+ y(t)-c(t)-\delta\left(B(t)-W(t)\right)\right] dt
 +\theta(t)^\top \sigma dZ(t),
\qquad t\ge 0.\\ [2mm]
\overline W(0) = & w,\\
\end{array}\right. \end{split}
\end{align}
where the $y(\cdot)$ is the pre-death labour income process (similarly to what we did above in \eqref{eq:barproc}, we set
$\overline y(t):=1_{\tau_{\delta}\ge t} y(t)$)
whose dynamics
is described by the following SDDE:
\begin{align}\label{DYNAMICS_LABOR_INCOME}
\begin{split}
\left\{\begin{array}{ll}
d  y(t) = & \left[ y(t) \mu_y+\int_{-d}^0 \phi(s) y(t+s) ds  \right] dt + y(t)\sigma_y^\top    dZ(t),\\[2mm]
y(0)= & x_0, \quad y(s) = x_1(s) \mbox{ for $s \in  [-d,0)$},
\end{array}\right. \end{split}
\end{align}
where $\mu_y \in  \mathbb R$, $\sigma_y \in  \mathbb R^n$,
$\mathbf 1 = (1,\dots, 1)^\top$ is the unitary vector in $\mathbb R^n$,
and the functions
$\phi(\cdot), x_1(\cdot)$ live in $L^2\left(-d,0; \mathbb R\right)$.
Existence and uniqueness of a strong solution (with $\P$-a.s. continuous paths) to the SDDE for $y$ are ensured by \cite[Theorem I.1 and Remark I.3(iv)]{MOHAMMED_BOOK_96} (see also, for a more general result, \cite[Section 3]{Rosestolato17}).
Existence and uniqueness of a strong solution to the SDE for $W$ are ensured, e.g., by the results of \cite[Chapter 5.6]{KARATZAS_SHREVE_91}.
}


{\begin{remark}
{\color{black} We note that the function $\phi$ allows one to modulate the contribution of different subsets of the labor income path in shaping its dynamics going forward. For example, $\phi$ could give more weight to the most recent labor income realizations relative to wage level in the more distant past. One could similarly introduce an additional delay term in the volatility  component of $y$, writing for example}
 \begin{equation*}
 \left\{\begin{array}{ll}
\text{d}y(t) =& \left[ y(t) \mu_y+\int_{-d}^0 y(t+s)  \phi(s) \text{d}s  \right] \text{d}t \\
&\\
 &+\left[ y(t)\sigma_y^\top  + \begin{pmatrix}
 \int_{-d}^0 y(t+s)  \varphi_1(s) \text{d}s \\
 \vdots
 \\
 \int_{-d}^0y(t+s) \varphi_n(s) \text{d}s
\end{pmatrix}^\top    \right] \text{d}Z(t),\\
&\\
y(0)= & x_0, \quad y(s) = x_1(s) \mbox{ for $s \in  [-d,0)$},
\end{array}\right.
 \end{equation*}
 {\color{black}with $\varphi_1, \ldots, \varphi_n$ belonging to $L^2\left(-d,0; \mathbb R\right)$. This is the setup considered, for example, in \cite{BGP} where no control problem is considered.
In this paper, we consider path dependency in the drift of $y$ only: the extension of our results to the above general case seems possible, although it would entail an increase in complexity of
notation and technicalities. We leave it for future work.}
\end{remark}
}

{\color{black} We study the problem of
maximizing the expected utility from lifetime consumption and bequest,
\begin{eqnarray}\label{DEF_OBJECTIVE FUNCTION_DEATH TIMEbar}
\mathbb E \left(\int_{0}^{+\infty} e^{-\rho t }
\frac{\overline c(t)^{1-\gamma}}{1-\gamma} dt
+ e^{-\rho t } \frac{\big(k \overline B(t)\big)^{1-\gamma}}{1-\gamma}dN(t)
\right),
\end{eqnarray}
over all triplets $\left(\overline c,\overline\theta,\overline B\right)\in \overline\Pi^0$
satisfying a suitable state constraint introduced further below in
\eqref{NO_BORROWING_WITHOUT_REPAYMENT_CONDITIONLA_MEAN},
where we denote by $N_t:=1_{\tau_\delta\leq t}$ the death indicator process
and let parameteres $k,\gamma,\rho$ satisfy
\begin{equation}\label{eq:HPkgammarho}
k>0, \qquad \gamma \in (0,1) \cup (1, +\infty), \qquad \rho >0,
\end{equation}
an assumption that will stand throughout the paper.}



As the death time is independent of $Z$ and exponentially distributed, we can rewrite the objective functional in
\eqref{DEF_OBJECTIVE FUNCTION_DEATH TIMEbar}
as follows  (e.g., \cite[Section 3.6.2]{PHAM_BOOK_2009})
\begin{eqnarray}\label{OBJECTIVE_FUNCTION}
\mathbb E \left(\int_{0}^{+\infty} e^{-(\rho+ \delta) t }
\left( \frac{c(t)^{1-\gamma}}{1-\gamma}
+ \delta \frac{\big(k B(t)\big)^{1-\gamma}}{1-\gamma}\right) dt
\right).
\end{eqnarray}
{\color{black} Here we work with the pre-death controls $(c,B,\theta)\in \Pi^0$
and with the pre-death state variables $(W,y)$ whose dynamics is given by the state equation:
\begin{align}\label{DYNAMICS_WEALTH_LABOR_INCOME}
\begin{split}
\left\{\begin{array}{ll}
dW(t) = & \left[W(t) r + \theta(t)^\top (\mu-r\mathbf{1})  + y(t) - c(t)
-\delta\left(B(t)-W(t)\right)\right] dt + \theta(t)^\top \sigma dZ(t) \\[2mm]
dy(t) = & \left[ y(t) \mu_y+\int_{-d}^0 \phi(s) y(t+s) ds  \right] dt + y(t)\sigma_y^\top    dZ(t),\\[2mm]
W(0) = & w,\\
y(0)= & x_0, \quad y(s) = x_1(s) \mbox{ for $s \in  [-d,0)$}.
\end{array}\right. \end{split}
\end{align}
}

Let us now introduce a state constraint which is natural in our context.
We first observe that, given the financial market described by \eqref{DYNAMIC_MARKET}, the pre-death state-price density of the agent obeys the stochastic differential equation
\begin{equation}\label{DYN_STATE_PRICE_DENSITY}
\left\{\begin{array}{ll}
d \xi (t)& = - \xi(t)(r +\delta) dt  -\xi(t) \kappa^\top dZ(t),\\
\xi(0)&=1.
\end{array}\right.
\end{equation}
where $\kappa$ is the market price of risk and is defined as follows (e.g., \cite{KARATZAS_SHREVE}):
\begin{equation}\label{DEF_KAPPA}
\kappa:= (\sigma)^{-1} (\mu- r \mathbf 1).
\end{equation}
We will then require the agent to satisfy the following constraint
\begin{equation}\label{NO_BORROWING_WITHOUT_REPAYMENT_CONDITIONLA_MEAN}
W(t) +   \xi^{-1}(t)\mathbb E\left( \int_t^{+\infty} \xi(u) y(u) du \Bigg\vert \mathcal F_t\right)  \geq 0,
\end{equation}
which is a no-borrowing-without-repayment constraint,
{as the second term in
\eqref{NO_BORROWING_WITHOUT_REPAYMENT_CONDITIONLA_MEAN}
represents the agent's market value of
human capital at time $t$.
In other words, human capital can be pledged as collateral,
and represents the agent's maximum borrowing capacity. The agent cannot default on his/her debt upon death, as the bequest target, $B$, is nonnegative.
We note that by ignoring the delay term (i.e., setting $\phi=0$ a.e.), the constraint reduces to $W(t)\geq - \beta^{-1}y(t)$,
with
\begin{equation}\label{DEF_BETA}
\beta:= r+\delta - \mu_y+  \sigma_y^\top \kappa,
\end{equation}
a parameter expressing the effective discount rate for labor income.   We
thus recover the borrowing constraints considered in the benchmark model of \cite{DYBVIG_LIU_JET_2010}, for example.}

Let us denote by $W^{w,x_0,x_1}\left(t; c,B,\theta\right)$ and $y^{x_0,x_1}(t)$
the solutions at time $t$ of system \eqref{DYNAMICS_WEALTH_LABOR_INCOME},
where we emphasize the dependence of the solutions on the initial conditions $(w,x_0,x_1)$
and strategies $(c,B,\theta)$. 
We can then define the set of admissible controls as follows:
\begin{equation}\label{DEF_PI_FIRST_DEFINITION}
\begin{split}
\Pi\left(w,x_0,x_1\right):= \Bigg\{ & c(\cdot), B(\cdot), \theta(\cdot)
\in \Pi^0,
 \ \mbox{such that:}\\
 & W^{w,x_0,x_1}\left(t; c,B,\theta\right) +  \xi^{-1}(t)  \mathbb E\left( \int_t^{+\infty} \xi(u) y^{x_0,x_1}(u) du \Big\vert  \mathcal F_t\right)  \geq 0\,\quad \forall t \geq 0\Bigg\}.
\end{split}
\end{equation}
Our problem is then to maximize the functional given in \eqref{OBJECTIVE_FUNCTION}.
over all controls in
$\Pi\left(w,x_0,x_1\right)$.

\color{black}

We now introduce some standing  assumptions. {\color{black} Let us first define the following quantities:}
\begin{equation}\label{DEF_BETA_INFTY}
\beta_{\infty}:= \int_{-d}^0   e^{(r+\delta)s}  \phi (s)   ds,
\end{equation}
\begin{equation}\label{DEF_BETA_INFTYBAR}
\overline\beta_{\infty}:= \int_{-d}^0   e^{(r+\delta)s}  |\phi (s)|   ds.
\end{equation}
We have that $\overline\beta_{\infty}\ge \beta_{\infty}$, with the equality holding if and only if
$\phi\ge 0$ a.e.. We then introduce the following standing assumptions:
%

\begin{hypothesis}\label{HYP_BETA-BETA_INFTY}
\begin{itemize}
  \item[]
  \item[(i)]
  \begin{equation}\label{EQ_HYP_BETA-BETA_INFTY}
\beta-\overline\beta_{\infty} >0.
\end{equation}

  \item[(ii)]
\begin{equation}\label{HYP_POSITIVITY_DEN_NU}
\rho + \delta -(1-\gamma) (r + \delta +\frac{\kappa^\top \kappa}{2\gamma }) >0{.}
\end{equation}
\end{itemize}
\end{hypothesis}

\noindent
\begin{remark}\label{rm:hp1}
Hypothesis~\ref{HYP_BETA-BETA_INFTY} is needed
to rewrite in a convenient way constraint \eqref{NO_BORROWING_WITHOUT_REPAYMENT_CONDITIONLA_MEAN}. In particular, it implies that $\beta >0$, and hence that the effective discount rate for labor income is positive (e.g., \cite{DYBVIG_LIU_JET_2010}). This allows us to apply Theorem 2.1 of \cite{BGP}, which is recalled, in the form we need here, in Proposition \ref{PROP_CONSTRAINT_INTERPRETATION_INF_RET}.
When $\phi \ge 0$ a.e. (which also implies $\beta_{\infty}>0$),
strict positivity of the labor income process is ensured, as shown in Proposition~\ref{pr:ypositive} below, when the initial data are positive.
\end{remark}

\begin{remark}\label{rm:hp2}
Hypothesis~\ref{HYP_BETA-BETA_INFTY}-(ii) is required to ensure that the value function is finite, as proved in Proposition~\ref{PROP_COMPARISON_FINITENESS_VAL_FUN_INF_RET}.
When $\rho+\delta>0$, such hypothesis is always satisfied if the relative risk aversion $\gamma>1$, but not in the case when $\gamma \in (0,1)$. It can actually be proved that,
when $\gamma\in (0,1)$ and
$$
\rho + \delta -(1-\gamma) (r + \delta +\frac{\kappa^\top \kappa}{2\gamma }) <0,
$$
the value function is infinite; for example, see \cite{FreniGozziSalvadori06} for the deterministic case.
\end{remark}


Even if it is not necessary to solve the problem, it is useful to provide conditions guaranteeing the positivity of the labor income process.
One is given in the following proposition.

\begin{proposition}\label{pr:ypositive}
Let $y(t)=y^{x_0,x_1}(t)$ be the solution at time $t$ of the second equation of system \eqref{DYNAMICS_WEALTH_LABOR_INCOME}
with initial data $x_0\in \R$, $x_1\in L^2(-d,0;\R)$.
Defining
\begin{eqnarray}
	E(t)&:=& e^{(\mu_y - \frac{1}{2} \sigma_y^\top \sigma_y) t + \sigma_y^\top Z(t)}\\
	I(t)&:= &\int_0^t     E^{-1}(u) \left(\int_{-d}^0 \phi(s)y(u+s) ds\right) du,
\end{eqnarray}
we have
\begin{eqnarray}\label{VARIATION_CONSTANTS_FORMULA}
y(t)= E(t)\big(x_0 + I(t)\big).
\end{eqnarray}
Moreover, if $x_0>0$, $x_1\ge 0$ a.e. and $\phi\ge 0$ a.e., then $y(t)>0$ must hold  $\P$-a.s..
\end{proposition}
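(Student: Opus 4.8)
The statement has two parts: first the variation-of-constants representation $y(t)=E(t)(x_0+I(t))$, and then the strict positivity under the sign assumptions on the data. I would prove the representation first, since the positivity claim reduces to it.

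\textbf{Step 1: the variation-of-constants formula.} The idea is to treat the delay term $\int_{-d}^0\phi(s)y(t+s)\,ds$ as an (adapted, a.s.\ continuous) forcing term $f(t)$, so that $y$ solves the linear SDE $dy(t)=[\mu_y y(t)+f(t)]\,dt+\sigma_y^\top y(t)\,dZ(t)$ with $y(0)=x_0$. The process $E(t)=\exp\big((\mu_y-\tfrac12\sigma_y^\top\sigma_y)t+\sigma_y^\top Z(t)\big)$ is the stochastic exponential solving the homogeneous equation $dE(t)=\mu_y E(t)\,dt+\sigma_y^\top E(t)\,dZ(t)$, $E(0)=1$, and it is strictly positive with a locally bounded inverse $E^{-1}(t)=\exp\big(-(\mu_y-\tfrac12\sigma_y^\top\sigma_y)t-\sigma_y^\top Z(t)\big)$. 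Applying It\^o's formula to $E^{-1}(t)y(t)$ — note $d(E^{-1})(t)=E^{-1}(t)\big[(-\mu_y+\sigma_y^\top\sigma_y)\,dt-\sigma_y^\top\,dZ(t)\big]$ — the drift and martingale terms involving $\mu_y$, the quadratic variation cross term $-\sigma_y^\top\sigma_y E^{-1}y$, and the $\sigma_y^\top y$ volatility all cancel, leaving $d\big(E^{-1}(t)y(t)\big)=E^{-1}(t)f(t)\,dt$. Integrating from $0$ to $t$ gives $E^{-1}(t)y(t)=x_0+\int_0^t E^{-1}(u)f(u)\,du = x_0+I(t)$, which is the claimed formula after multiplying by $E(t)$. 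One should record that $I(t)$ is well defined: $y$ has a.s.\ continuous paths (hence so does $t\mapsto\int_{-d}^0\phi(s)y(t+s)\,ds$ by dominated convergence, using $\phi\in L^2$ and continuity of the shifted path on $[-d,\infty)$, with the initial segment $x_1\in L^2$), and $E^{-1}$ is a.s.\ continuous, so the integrand is a.s.\ locally bounded and the integral is finite.

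\textbf{Step 2: strict positivity.} Assume $x_0>0$, $x_1\ge 0$ a.e., and $\phi\ge 0$ a.e. Since $E(t)>0$ for all $t$ $\P$-a.s., from the formula it suffices to show $x_0+I(t)>0$ for all $t\ge 0$, a.s. The natural route is a pathwise continuity/first-exit-time argument: fix a path, let $T:=\inf\{t\ge 0: y(t)\le 0\}$ (with $\inf\emptyset=+\infty$), and suppose for contradiction $T<\infty$. By continuity $y(T)=0$ and $y(u)\ge 0$ for $u\in[0,T]$; also $y(u)=x_1(u)\ge 0$ for a.e.\ $u\in[-d,0)$. Then for every $u\in[0,T]$ the integrand $\int_{-d}^0\phi(s)y(u+s)\,ds\ge 0$ because $\phi\ge 0$ a.e.\ and $y(u+s)\ge 0$ for a.e.\ $s\in[-d,0]$ (the shifted argument $u+s$ ranges in $[-d,T]$). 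Hence $I(u)$ is nondecreasing on $[0,T]$, so $I(T)\ge I(0)=0$, and therefore $y(T)=E(T)(x_0+I(T))\ge E(T)\,x_0>0$, contradicting $y(T)=0$. Thus $T=+\infty$, i.e.\ $y(t)>0$ for all $t\ge 0$ on a set of probability one.

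\textbf{Expected main obstacle.} The computations in Step 1 are routine It\^o calculus. The only genuine care is in Step 2, at the interface of the pathwise argument with the null sets: one must fix a single $\P$-null set outside of which simultaneously $E(\cdot)>0$ everywhere, $y(\cdot)$ is continuous, and the representation formula holds for all $t$; and one must handle the "a.e.\ $s$" qualifier in $x_1\ge 0$ and $\phi\ge 0$ carefully, noting that it is harmless because it only enters through the Lebesgue integral $\int_{-d}^0\phi(s)y(u+s)\,ds$. A secondary subtlety is making the first-exit-time $T$ a legitimate (pathwise) object and checking $y(u)\ge 0$ on the closed interval $[0,T]$ up to and including $T$, which follows from left-continuity at $T$. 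None of this is deep, but it is where the proof must be written precisely rather than waved through.
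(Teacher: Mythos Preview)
Your proposal is correct and follows essentially the same approach as the paper. For the representation formula the paper simply cites the stochastic variation-of-constants result of Bonaccorsi (1999), whereas you spell out the It\^o computation on $E^{-1}(t)y(t)$; for positivity both arguments use the first hitting time of zero and the nonnegativity of $I$ on $[0,\tau]$ to reach the same contradiction $y(\tau)=E(\tau)(x_0+I(\tau))>0$.
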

\begin{proof}
Expression \eqref{VARIATION_CONSTANTS_FORMULA},
immediately follows by the stochastic variation of constants formula
(see, e.g., \cite{Bonaccorsi99}, Theorem 1.1). Concerning the next part of
the proposition, let $x_0>0$, $x_1\ge 0$ a.e., and $\phi\ge 0$ on $[-d,0]$ a.e..
Let $\tau:=\inf\{t\ge 0: \; y(t)=0\}$. Since $x_0>0$ and since $y$
is continuous $\P$-a.s., then it must be $\tau>0$, $\P$-a.s..
Moreover, assume that $\tau<+\infty$ in a set $\Omega_0\subseteq \Omega$ of positive probability.
Then, from \eqref{VARIATION_CONSTANTS_FORMULA} and the fact that $E(t)>0$ for every $t\ge 0$, we immediately get $I(\tau)=-x_0$ in $\Omega_0$.
Since $x_1\ge 0$ a.e., $\phi\ge 0$ a.e., and $y(t)\ge 0$ $\P$-a.s.
where $t \le \tau$, a contradiction follows.
\end{proof}

%
%
%
%
%
%
%

\color{black}

\section{Reformulation of the problem}
\label{Section_Mathematical_tools}
In this section we rewrite the problem in a form which is then solved in the subsequent Section \ref{SECTION_INFINITE_tau_R}.
In Subsection \ref{SSE:MARKOV} we show how to rewrite the stochastic delay equation for the labor income as a Markov SDE in a Hilbert space. In Subsection
\ref{Rephrasing the no-borrowing constraint} we show how to rewrite the no borrowing constraint \eqref{NO_BORROWING_WITHOUT_REPAYMENT_CONDITIONLA_MEAN}
in a way which is suitable for our needs.

\subsection{Reformulating the SDE for the labor income}
\label{SSE:MARKOV}

We aim to solve the stochastic optimal control problem introduced in the previous section by using the dynamic programming method.
The state equation for the labor income $y$ is a stochastic delay differential equation, and hence $y$ is not Markovian, and the same applies to the state $(W,y)$ of the control problem.
Thus, the dynamic programming principle
in its standard formulation does not apply.
As usual, (see on this e.g. \cite{VINTER75},
\cite{CHOJNOWSKA-MICHALIK_1978} or the books \cite[Section 0.2]{DAPRATO_ZABCZYK_RED_BOOK}
\cite[Section 2.6.8]{FABBRI_GOZZI_SWIECH_BOOK}),
it is convenient to reformulate the problem in an
infinite dimensional Hilbert space,
where the Markov property of the state holds,
and which takes into account both the present and the past values of the states.
To be precise, let us introduce the Delfour-Mitter Hilbert space $M_2$
(see e.g. \cite[Part II - Chapter 4]{BENSOUSSAN_DAPRATO_DELFOUR_MITTER}):
\begin{equation*}
M_2 := \mathbb R \times L^2\big( -d, 0; \mathbb R \big),
\end{equation*}
with inner product, for $x=(x_0, x_1), y=(y_0, y_1) \in M_2$, defined as
$$\langle x,y \rangle_{M_2}:= x_0 y_0 + \langle x_1, y_1 \rangle_{L^2},$$
where
$$\langle x_1, y_1 \rangle_{L^2}:=\int_{-d}^0 x_1(s) y_1(sd) ds.$$
(For ease of notation, we will drop below the subscript $L^2$ from the inner product of such space, writing simply $\langle x_1, y_1\rangle$).
To embed the state $y$ of the original problem
in the space $M_2$ we now introduce the linear operators $A$ (unbounded)
and $C$ (bounded). Define the domain $\mathcal D(A)$
as follows
\begin{equation*}
\mathcal D(A) :=\left\{(x_0,x_1) \in M_2: x_1(\cdot) \in
W^{1,2}\left( [-d, 0]; \mathbb R \right), x_0 = x_1(0)\right\}.
\end{equation*}
The operator $A: \mathcal D (A) \subset M_2 \rightarrow M_2$ is then defined as
\begin{equation}
\label{DEF_A}
A(x_0,x_1) := \left(\mu_y x_0 +\langle \phi,x_1 \rangle, x_1^{\prime}  \right),
\end{equation}
with $\mu_y,\phi$ appearing in equation (\ref{DYNAMICS_WEALTH_LABOR_INCOME}).
The operator $C:M_2 \rightarrow  \mathbb R^n \times L^2(-d,0; \mathbb R)$
is bounded and defined as
\begin{equation*}
C(x_0,x_1):= \left(x_0  \sigma_y , 0\right),
\end{equation*}
where $\sigma_y $ shows up in (\ref{DYNAMICS_WEALTH_LABOR_INCOME})
and where $0$ in the expression above stays for the null function in $L^2(-d,0; \mathbb R)$.
Proposition A.27 in \cite{DAPRATO_ZABCZYK_RED_BOOK} shows that $A$ generates a strongly continuous semigroup in $M_2$.

Consider the following stochastic differential equation in $M_2$
\begin{eqnarray}\label{INFINITE_DIMENSIONAL_STATE_EQUATION}
\begin{split}
\left\{\begin{array}{ll}
d X(t)& = A X(t) dt+( C X(t))_0^\top dZ(t),\\
X_0(0) &= x_0,\\
X_1(0)(s)& = x_1(s) \mbox{ for $s \in  [-d,0)$},
 \end{array}\right. \end{split}
\end{eqnarray}
where $( C X(t))_0^\top$ denotes the transpose of the $\mathbb R^n$-component
of the operator $C $ applied to $X(t)$.
The equation above admits a (mild) solution in $M_2$,
as ensured by Theorem 7.2
in \cite{DAPRATO_ZABCZYK_RED_BOOK},
(see the same reference for the definition of mild solution).
Denote the solution of (\ref{INFINITE_DIMENSIONAL_STATE_EQUATION}) with $X(\cdot)$
(or $X^x(\cdot)$ if we want to underline its dependence on the initial condition).
Note that $X$ is Markovian.
Theorem 3.9 and Remark 3.7 in \cite{CHOJNOWSKA-MICHALIK_1978}
(see also \cite{GOZZI_MARINELLI}, \cite{FabbriFederico14})
show that we can identify the solution $X(t)= \left(X_0(t), X_1(t) \right)$ of (\ref{INFINITE_DIMENSIONAL_STATE_EQUATION})
 with the couple $\big(y(t), y (t+s)_{\mid s \in [-d,0)} \big)$,
where $y(t)$ is the solution of the second equation in (\ref{DYNAMICS_WEALTH_LABOR_INCOME}).

{For the reader's convenience we rewrite the system
\eqref{INFINITE_DIMENSIONAL_STATE_EQUATION}
decoupling the two components of $X$.
\begin{eqnarray}\label{INFINITE_DIMENSIONAL_STATE_EQUATIONdecoupled}
\begin{split}
\left\{\begin{array}{ll}
d X_0(t)& = \left[\mu_y X_0(t) + \<\phi,X_1(t)\> \right]dt+
X_0(t)\sigma_y^\top dZ(t),\\
d X_1(t)& = \frac{\partial}{\partial s}X_1(t) dt,\\
X_0(0) &= x_0,\\
X_1(0)(s)& = x_1(s) \mbox{ for $s \in  [-d,0)$},
 \end{array}\right. \end{split}
\end{eqnarray}
We also clarify, for the reader's convenience, that the derivative of $X_1$ must be intended, in general, in distributional sense.}

%
%

 \medskip


We will need the following standard result on the adjoint operator of $A$.
\begin{proposition}\label{pr:adjoint}
The adjoint of $A$ is $A^*: \mathcal D(A^*)\subset M_2 \longrightarrow M_2$,
defined as follows:
\begin{equation}\label{EQ_DOMAINADJOINT_OPERATOR_A*}
\mathcal D(A^*):=\left\{(y_0,y_1) \in M_2: y_1 (\cdot) \in
W^{1,2}\left( [-d, 0]; \mathbb R \right),\, y_1(-d) = 0\right\}{,}
\end{equation}
\begin{equation}\label{EQ_ADJOINT_OPERATOR_A*}
A^*(y_0,y_1):= \left(\mu_y y_0 +y_1(0), -  y_1^{\prime} + y_0 \phi \right){.}
\end{equation}
\end{proposition}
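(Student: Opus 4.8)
The plan is to verify the defining relation of the adjoint, namely $\langle Ax, y\rangle_{M_2} = \langle x, A^*y\rangle_{M_2}$ for all $x\in\mathcal D(A)$ and all $y$ in the claimed domain, and then to check that the claimed domain is in fact the full domain of $A^*$ (not merely contained in it). Throughout I write $x=(x_0,x_1)$, $y=(y_0,y_1)$, with $x_1\in W^{1,2}(-d,0;\R)$ and $x_0=x_1(0)$ as required by $\mathcal D(A)$.

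First I would compute the left-hand side directly from the definition \eqref{DEF_A}:
\begin{equation*}
\langle Ax,y\rangle_{M_2} = \big(\mu_y x_0 + \langle\phi,x_1\rangle\big)y_0 + \langle x_1', y_1\rangle = \mu_y x_0 y_0 + y_0\int_{-d}^0 \phi(s)x_1(s)\,ds + \int_{-d}^0 x_1'(s)y_1(s)\,ds.
\end{equation*}
The key step is an integration by parts on the last term. For $y_1\in W^{1,2}(-d,0;\R)$ one has $\int_{-d}^0 x_1'(s)y_1(s)\,ds = [x_1 y_1]_{-d}^0 - \int_{-d}^0 x_1(s)y_1'(s)\,ds = x_1(0)y_1(0) - x_1(-d)y_1(-d) - \int_{-d}^0 x_1(s)y_1'(s)\,ds$. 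Now I impose the boundary condition $y_1(-d)=0$ to kill the lower endpoint term, and I use $x_1(0)=x_0$ (valid since $x\in\mathcal D(A)$) to rewrite the upper endpoint term as $x_0 y_1(0)$. Collecting everything,
\begin{equation*}
\langle Ax,y\rangle_{M_2} = x_0\big(\mu_y y_0 + y_1(0)\big) + \int_{-d}^0 x_1(s)\big(y_0\phi(s) - y_1'(s)\big)\,ds = \langle x, (\mu_y y_0 + y_1(0),\, -y_1' + y_0\phi)\rangle_{M_2},
\end{equation*}
which is exactly $\langle x, A^*y\rangle_{M_2}$ with $A^*$ as in \eqref{EQ_ADJOINT_OPERATOR_A*}. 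This shows the claimed operator and domain are contained in the true adjoint.

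For the reverse inclusion — that $\mathcal D(A^*)$ is no larger than claimed — I would argue as follows. Suppose $y=(y_0,y_1)\in M_2$ is such that the linear functional $x\mapsto\langle Ax,y\rangle_{M_2}$ is continuous on $\mathcal D(A)$ with respect to the $M_2$-norm. Testing first against $x=(0,x_1)$ with $x_1\in C_c^\infty(-d,0)$ (so that automatically $x_1(0)=0$ and $x\in\mathcal D(A)$), continuity of $x_1\mapsto\int_{-d}^0 x_1'(s)y_1(s)\,ds$ in $L^2$ forces $y_1$ to have a weak derivative in $L^2$, i.e. $y_1\in W^{1,2}(-d,0;\R)$; this is the standard characterization of $W^{1,2}$ as the space against which differentiation is $L^2$-continuous. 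Once $y_1\in W^{1,2}$ is known, the integration-by-parts identity above is valid for all $x\in\mathcal D(A)$, and the surviving boundary term is $-x_1(-d)y_1(-d)$ (the $x_1(0)y_1(0)$ term having been absorbed into the $x_0$-coefficient via $x_0=x_1(0)$). Choosing a sequence $x^{(k)}\in\mathcal D(A)$ with $x_1^{(k)}(-d)=1$ but $\|x^{(k)}\|_{M_2}\to 0$ (e.g. sharp spikes near $-d$) shows that boundedness of the functional forces $y_1(-d)=0$. Hence $y\in\mathcal D(A^*)$ as claimed. I expect this reverse-inclusion part to be the only place requiring genuine care; the forward computation is a routine integration by parts. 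I would either include this density/boundary-trace argument in full or simply cite the standard identification of adjoints of first-order differential operators on $M_2$ (e.g. the Delfour--Mitter framework in \cite[Part II - Chapter 4]{BENSOUSSAN_DAPRATO_DELFOUR_MITTER}), since the result is classical.
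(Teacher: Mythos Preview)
Your proof is correct and follows the same route as the paper's own proof: compute $\langle Ax,y\rangle_{M_2}$, integrate by parts, and read off both the formula for $A^*$ and the two conditions $y_1\in W^{1,2}$, $y_1(-d)=0$ from the requirement that $x\mapsto\langle Ax,y\rangle_{M_2}$ be $M_2$-bounded. In fact you supply more detail than the paper, which merely asserts ``it can be proved that'' boundedness forces $y_1\in W^{1,2}$ and then $y_1(-d)=0$, whereas you spell out the test-function and spike arguments explicitly.
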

\begin{proof}
We provide a sketch of the proof for the reader's convenience.
We have to check that $\mathcal D (A^*)$ coincides with the set
of points $(y_0,y_1 )\in M_{{2}}$ such that
\begin{equation}\label{eq:adjointproof1}
|\langle A(x_0,x_1), (y_0,y_1)  \rangle_{M_2} |\le M |(x_0,x_1)|_{M_2}
\quad\forall (x_0,x_1 )\in \mathcal D (A){.}
\end{equation}
Moreover we have to check that
for any $(x_0,x_1 )\in \mathcal D (A)$, $(y_0,y_1 )\in \mathcal D (A^*)$,
\begin{equation}\label{eq:adjointproof2}
\langle A(x_0,x_1), (y_0,y_1)  \rangle_{M_2}  =
\langle (x_0,x_1), A^* (y_0,y_1)  \rangle_{M_2} .
\end{equation}
By \eqref{DEF_A} we have, for all $x\in \mathcal D (A)$,
\begin{equation}\label{DEF_A_INNERPRODUCT}
\langle A(x_0,x_1), (y_0,y_1)  \rangle_{M_2} =   \mu_y x_0 y_0+y_0 \langle \phi , x_1\rangle+ \langle  x_1^{\prime}, y_1 \rangle.
\end{equation}
It can be proved that \eqref{eq:adjointproof1}
holds only if $y_{{1}}\in W^{1,2}\left( [-d, 0]; \mathbb R \right)${.}
Then, using integration by parts we get
\begin{equation}\label{eq:A*new}
\langle  x_1^{\prime}, y_1 \rangle=
 x_1(0) y_1(0) - x_1(-d) y_1(-d)-\< x_1, y_1^{\prime} \>.
\end{equation}
From this we get that \eqref{eq:adjointproof1} also implies  $y_1(-d)=0$, hence
$y\in\mathcal D (A^*)$.
When $x\in\mathcal D (A)$ and $y\in\mathcal D (A^*)$ we get
from \eqref{DEF_A_INNERPRODUCT} and \eqref{eq:A*new}
\begin{equation}\label{DEF_A_INNERPRODUCTNEWbis}
\langle A(x_0,x_1), (y_0,y_1)  \rangle_{M_2} =
\mu_y x_0 y_0+y_0 \< \phi , x_1\>
+x_1(0) y_1(0) -\< x_1, y_1^{\prime}\>
\end{equation}
and \eqref{eq:adjointproof1} is satisfied thanks to the boundary condition
$x_0=x_1(0)$.
Finally \eqref{eq:adjointproof1} follows by straightforward computations.
\end{proof}

\subsection{Rephrasing the no-borrowing constraint}
\label{Rephrasing the no-borrowing constraint}
In this section we express the no-borrowing constraint at each time $t\geq 0$ in terms of $X_0(t)$ and $X_1(t)$ given in the previous subsection. To do so,
we introduce the constant $g_{\infty}>0$ and the function $h_{\infty}: [-d,0]\longrightarrow \mathbb R$ defined as follows:
 \begin{eqnarray}\label{DEF_g_infty_h_infty}
 \left\{
\begin{array}{ll}
g_{\infty} &:= \dfrac{1}{\beta- \beta_{\infty} },\\
\\
h_{\infty} (s)&: = g_{\infty}    { \displaystyle 
	\int_{-d}^s   e^{-(r+\delta)(s-\tau)} \phi (\tau) d \tau,}
\end{array}
\right.
\end{eqnarray}
with $\beta $ and $\beta_{\infty}$ defined in (\ref{DEF_BETA}) and (\ref{DEF_BETA_INFTY}) respectively.
\begin{lemma}\label{LEMMA_DIFF_EQ_G_INFTY_H_INFTY}
For a.e. $s \in [-d,0]$, the function $h_{\infty}$ defined in (\ref{DEF_g_infty_h_infty}) is differentiable and it satisfies
\begin{eqnarray}\label{EQ_g_h_infty}
\left\{
\begin{array}{ll}
 h_{\infty}^{\prime} (s) &=  g_{\infty}  \phi (s) -(r+\delta) h_{\infty}(s)  ,\\
  h_{\infty} (0)&  = \beta g_{\infty} -  1.
\end{array}
\right.
\end{eqnarray}
Moreover $(g_{\infty},h_{\infty}) \in \mathcal D(A^*)$.
\end{lemma}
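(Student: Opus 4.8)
The statement has three parts: (a) $h_\infty$ is differentiable a.e.\ and solves the ODE-type system \eqref{EQ_g_h_infty}; (b) the boundary value $h_\infty(0) = \beta g_\infty - 1$; (c) $(g_\infty, h_\infty) \in \mathcal D(A^*)$. I would address them in this order, since (c) will follow almost immediately once (a) and (b) are in hand.

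For part (a), the plan is to write $h_\infty(s) = g_\infty e^{-(r+\delta)s}\int_{-d}^s e^{(r+\delta)\tau}\phi(\tau)\,d\tau$ by pulling the factor $e^{-(r+\delta)s}$ out of the integral in \eqref{DEF_g_infty_h_infty}. The inner function $s \mapsto \int_{-d}^s e^{(r+\delta)\tau}\phi(\tau)\,d\tau$ is absolutely continuous on $[-d,0]$ because $e^{(r+\delta)\tau}\phi(\tau) \in L^1(-d,0)$ (indeed $\phi \in L^2(-d,0;\mathbb R) \subset L^1$), so it is differentiable a.e.\ with derivative $e^{(r+\delta)s}\phi(s)$. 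Multiplying by the smooth factor $g_\infty e^{-(r+\delta)s}$ and applying the product rule gives, a.e.,
\[
h_\infty'(s) = -(r+\delta)\, g_\infty e^{-(r+\delta)s}\!\int_{-d}^s e^{(r+\delta)\tau}\phi(\tau)\,d\tau + g_\infty e^{-(r+\delta)s} e^{(r+\delta)s}\phi(s) = -(r+\delta)h_\infty(s) + g_\infty \phi(s),
\]
which is the first line of \eqref{EQ_g_h_infty}. This also shows $h_\infty \in W^{1,2}(-d,0;\mathbb R)$: $h_\infty$ is bounded and $h_\infty' = g_\infty\phi - (r+\delta)h_\infty \in L^2(-d,0;\mathbb R)$.

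For part (b), I evaluate at $s=0$: $h_\infty(0) = g_\infty\int_{-d}^0 e^{(r+\delta)\tau}\phi(\tau)\,d\tau = g_\infty\beta_\infty$ by the definition \eqref{DEF_BETA_INFTY} of $\beta_\infty$. Then I use the definition $g_\infty = 1/(\beta - \beta_\infty)$, i.e.\ $g_\infty\beta - g_\infty\beta_\infty = 1$, so $g_\infty\beta_\infty = g_\infty\beta - 1 = \beta g_\infty - 1$, which is the second line of \eqref{EQ_g_h_infty}. (Hypothesis~\ref{HYP_BETA-BETA_INFTY}-(i), together with $\overline\beta_\infty \ge \beta_\infty$, guarantees $\beta - \beta_\infty > 0$, so $g_\infty$ is well defined and positive.) Finally, for part (c): the description of $\mathcal D(A^*)$ in \eqref{EQ_DOMAINADJOINT_OPERATOR_A*} requires the second component to lie in $W^{1,2}([-d,0];\mathbb R)$ and to vanish at $-d$. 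We have just shown $h_\infty \in W^{1,2}$, and $h_\infty(-d) = g_\infty\int_{-d}^{-d}(\cdots)\,d\tau = 0$ directly from \eqref{DEF_g_infty_h_infty}; since $g_\infty \in \mathbb R$ is the first component, $(g_\infty, h_\infty) \in \mathcal D(A^*)$.

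None of the steps is a genuine obstacle; the only point requiring a little care is the justification that $s \mapsto \int_{-d}^s e^{(r+\delta)\tau}\phi(\tau)\,d\tau$ is absolutely continuous (hence differentiable a.e.\ with the expected derivative) given only $\phi \in L^2$, which is exactly why the conclusion is stated "for a.e.\ $s$" rather than everywhere. Everything else is bookkeeping with the definitions of $\beta$, $\beta_\infty$, $g_\infty$, and $\mathcal D(A^*)$.
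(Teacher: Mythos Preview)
Your proof is correct and follows essentially the same approach as the paper: factor the integral formula for $h_\infty$, differentiate using absolute continuity of the indefinite integral of an $L^2\subset L^1$ function to obtain the ODE, check $h_\infty(0)=g_\infty\beta_\infty=\beta g_\infty-1$ from the definitions, and then verify $(g_\infty,h_\infty)\in\mathcal D(A^*)$ via $h_\infty\in W^{1,2}$ and $h_\infty(-d)=0$. The only cosmetic differences are that the paper treats the boundary value first and proves $h_\infty\in L^2$ by an explicit Jensen-type estimate, whereas you simply note $h_\infty$ is bounded; both routes are valid.
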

\begin{proof}
	By definition of $\beta_{\infty}$ we have
	$$  h_{\infty} (0)  =\beta_{\infty}g_{\infty}  ,  $$
	and therefore
	\begin{equation*}
	\beta g_{\infty}  -  h_{\infty} (0)  = \beta g_{\infty} - \beta_{\infty}g_{\infty}  =1,
	\end{equation*}
	thus $h_{\infty}$ satisfies the terminal condition in (\ref{EQ_g_h_infty}).
Differentiability a.e. of $h_{\infty}(s)$ follows by standard differentiability of
integral functions. Differentiating we then get
	\begin{eqnarray*}
		\begin{split}
			h_{\infty}^{\prime} (s)
			= -(r+\delta) h_{\infty}(s) + g_{\infty}  \phi (s) ,
		\end{split}
	\end{eqnarray*}
	and (\ref{EQ_g_h_infty}) then follows.
	Let us now check that $h_{\infty} \in L^2 (-d,0, \mathbb R)$:
	\begin{multline*}
	\int_{-d}^0 h_{\infty}^2(s) ds =
	\frac{1}{(\beta - \beta_{\infty})^2}\int_{-d}^0 \Big(   \int_{-d}^s e^{-(r+\delta)(s-\tau)}  \phi(\tau)d \tau\Big)^2 ds \\
	\leq   \frac{1}{(\beta - \beta_{\infty})^2} \int_{-d}^0   \int_{-d}^s e^{-2(r+\delta)(s-\tau)}  \phi^2(\tau)d \tau ds
	\leq \frac{1}{(\beta - \beta_{\infty})^2}  \int_{-d}^0   \int_{-d}^s  \phi^2(\tau)d \tau ds\\
	\leq   \frac{1}{(\beta - \beta_{\infty})^2}\int_{-d}^0   \int_{-d}^0  \phi^2(\tau)d \tau ds
	=  \frac{d}{(\beta - \beta_{\infty})^2} \|\phi  \|_2^2 <+\infty,
	\end{multline*}
	where the first inequality follows by Jensen's inequality.
Using (\ref{EQ_g_h_infty}) we then immediately get that
$h_{\infty}\in W^{1,2}\left( [-d, 0]; \R \right)$.
Finally, since $h_{\infty}(-d)=0$ we get
$(g_{\infty}, h_{\infty})$ is in $\mathcal D(A^*)$.
\end{proof}

The following Proposition, which is a direct consequence of Theorem~2.1 of \cite{BGP},
 provides an explicit expression for the market value of human capital.
 \begin{proposition}\label{PROP_CONSTRAINT_INTERPRETATION_INF_RET}
Let $X(t)$ solve \eqref{INFINITE_DIMENSIONAL_STATE_EQUATION} and $W$
solve the first of \eqref{DYNAMICS_WEALTH_LABOR_INCOME} with $X_0(t)$ in place of $y(t)$.
Let $\xi(t)$ solve (\ref{DYN_STATE_PRICE_DENSITY}).
Then, the market value of human capital admits the following representation
\begin{equation}\label{eq:BGP}
\xi(t)^{-1 }\mathbb E\left( \int_t^{+\infty} \xi(u) X_0(u) du \Bigg| \mathcal F_t\right) = g_{\infty}X_0(t)+ \langle h_{\infty}, X_1(t) \rangle
\qquad \mbox{  $\mathbb P $-a.s.}.
\end{equation}
\end{proposition}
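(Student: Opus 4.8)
The plan is to verify the representation formula $\xi(t)^{-1}\mathbb E\left(\int_t^{+\infty}\xi(u)X_0(u)\,du\,\big|\,\mathcal F_t\right)=g_\infty X_0(t)+\langle h_\infty,X_1(t)\rangle$ by an infinite-dimensional martingale / Itô argument, using the fact (established in Lemma~\ref{LEMMA_DIFF_EQ_G_INFTY_H_INFTY}) that $(g_\infty,h_\infty)\in\mathcal D(A^*)$ together with the explicit ODE \eqref{EQ_g_h_infty} it satisfies. First I would reduce the claim to a statement about the finite time horizon: writing $v(t):=\xi(t)\big(g_\infty X_0(t)+\langle h_\infty,X_1(t)\rangle\big)=\xi(t)\langle (g_\infty,h_\infty),X(t)\rangle_{M_2}$, the assertion \eqref{eq:BGP} is equivalent to
\begin{equation*}
v(t)=\mathbb E\left(\int_t^{+\infty}\xi(u)X_0(u)\,du\,\Big|\,\mathcal F_t\right),\qquad \mathbb P\text{-a.s.}
\end{equation*}
which in turn follows if I show that $M(T):=\int_0^T\xi(u)X_0(u)\,du+v(T)$ is an $\mathbb F$-martingale and that $v(T)\to 0$ in $L^1$ as $T\to+\infty$ (so that $v(t)=\mathbb E(M_\infty\mid\mathcal F_t)-\int_0^t\xi(u)X_0(u)\,du$).

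The core computation is the drift term in $d v(t)$. Using the product rule for $\xi(t)$ (which solves \eqref{DYN_STATE_PRICE_DENSITY}) and the mild-solution Itô formula for $\langle(g_\infty,h_\infty),X(t)\rangle_{M_2}$ — legitimate precisely because the test element lies in $\mathcal D(A^*)$, so that $\langle (g_\infty,h_\infty),AX(t)\rangle_{M_2}=\langle A^*(g_\infty,h_\infty),X(t)\rangle_{M_2}$ — one computes
\begin{equation*}
d\langle(g_\infty,h_\infty),X(t)\rangle_{M_2}=\langle A^*(g_\infty,h_\infty),X(t)\rangle_{M_2}\,dt+g_\infty X_0(t)\sigma_y^\top\,dZ(t).
\end{equation*}
By Proposition~\ref{pr:adjoint} and \eqref{EQ_g_h_infty}, $A^*(g_\infty,h_\infty)=\big(\mu_y g_\infty+h_\infty(0),-h_\infty'+g_\infty\phi\big)=\big(\mu_y g_\infty+h_\infty(0),(r+\delta)h_\infty\big)$, so that $\langle A^*(g_\infty,h_\infty),X(t)\rangle_{M_2}=(\mu_y g_\infty+h_\infty(0))X_0(t)+(r+\delta)\langle h_\infty,X_1(t)\rangle$. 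Combining with the $-\xi(t)(r+\delta)$ drift from $\xi$, the cross-variation term $-\xi(t)g_\infty X_0(t)\sigma_y^\top\kappa$ from the correlation between the $dZ$ parts, and using $\mu_y g_\infty+h_\infty(0)-(r+\delta)g_\infty-g_\infty\sigma_y^\top\kappa=g_\infty(\mu_y-\beta)+h_\infty(0)=-g_\infty\beta_\infty+h_\infty(0)+g_\infty\beta_\infty\cdot 0$... more precisely, using $h_\infty(0)=\beta g_\infty-1$ and $\beta=r+\delta-\mu_y+\sigma_y^\top\kappa$, the coefficient of $\xi(t)X_0(t)\,dt$ collapses exactly to $-1$, while the coefficients of $\xi(t)\langle h_\infty,X_1(t)\rangle\,dt$ cancel between the $(r+\delta)h_\infty$ term and the $-(r+\delta)$ discount term. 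Hence $d v(t)=-\xi(t)X_0(t)\,dt+(\text{martingale part})$, which is exactly what makes $M(T)=\int_0^T\xi(u)X_0(u)\,du+v(T)$ a local martingale; integrability (Hypotheses~\ref{hp:S}, \ref{HYP_BETA-BETA_INFTY} give exponential moment bounds on $X_0$ and $\xi$, hence on $\langle h_\infty,X_1\rangle$ via boundedness of $h_\infty$) upgrades it to a true martingale and also yields $v(T)\to 0$ in $L^1$.

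Alternatively, and perhaps more cleanly given that the statement is advertised as a direct consequence of Theorem~2.1 of \cite{BGP}, I would simply invoke that theorem: it provides, in the uncontrolled setting, the representation of human capital as $g_\infty y(t)+\langle h_\infty, y(t+\cdot)\rangle$ under exactly Hypothesis~\ref{HYP_BETA-BETA_INFTY}-(i), and then note that by the identification of $X(t)=(X_0(t),X_1(t))$ with $(y(t),y(t+\cdot)_{|[-d,0)})$ established after \eqref{INFINITE_DIMENSIONAL_STATE_EQUATION}, the claim is immediate; one only needs to check that the constants $\beta,\beta_\infty$ and the function $h_\infty$ here coincide with those of \cite{BGP}, which is the content of \eqref{DEF_g_infty_h_infty}–\eqref{EQ_g_h_infty}.

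I expect the main obstacle to be the integrability / transversality argument at $T\to+\infty$: one must control $\mathbb E[\xi(T)|X(T)|_{M_2}]$ and show it vanishes, which requires combining the explicit variation-of-constants formula \eqref{VARIATION_CONSTANTS_FORMULA} (or a Gronwall estimate on $\mathbb E|X(t)|_{M_2}^2$ in the mild sense) with the geometric decay of $\mathbb E[\xi(T)^2]^{1/2}$ coming from $r+\delta>0$, and to ensure the growth rate of $y$ — governed by $\mu_y$ and the delay kernel $\phi$, i.e. ultimately by the sign of $\beta-\overline\beta_\infty$ in Hypothesis~\ref{HYP_BETA-BETA_INFTY}-(i) — is strictly dominated by $r+\delta$. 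This is precisely where Hypothesis~\ref{HYP_BETA-BETA_INFTY}-(i) is used, mirroring the role of $\beta>0$ in the no-delay benchmark of \cite{DYBVIG_LIU_JET_2010}; the interior drift/martingale computation, by contrast, is routine once the adjoint identity and the ODE for $h_\infty$ are in hand.
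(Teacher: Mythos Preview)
Your proposal is correct. The paper's own proof consists of a single line citing \cite[Theorem~2.1]{BGP}, which is exactly your second alternative; your first approach --- computing $d\big(\xi(t)\langle(g_\infty,h_\infty),X(t)\rangle_{M_2}\big)$ via the adjoint identity and the ODE \eqref{EQ_g_h_infty}, showing the drift collapses to $-\xi(t)X_0(t)$, and then handling the transversality term with Hypothesis~\ref{HYP_BETA-BETA_INFTY}-(i) --- is a correct self-contained reconstruction of what that cited theorem establishes, and the drift computation you sketch is accurate.
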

\begin{proof}
See \cite[Theorem~2.1]{BGP}.
\end{proof}
%

Expression \eqref{eq:BGP} shows that the market value of human capital can be decomposed into two terms: one capturing the current market value of the past trajectory of labor income over $[t-d,t]$, and one capturing the current market value of the future labor income stream (see the discussion in \cite{BGP}). This distinction will be important when interpreting the solution to our optimization problem.

Proposition \ref{PROP_CONSTRAINT_INTERPRETATION_INF_RET} implies that
the original constraint \eqref{NO_BORROWING_WITHOUT_REPAYMENT_CONDITIONLA_MEAN}
can be reformulated as follows:
\begin{equation}\label{NO_BORROWING_WITHOUT_REPAYMENT_EXPLICIT_INF_RET}
W(t) +g_{\infty}X_0(t)+ \langle h_{\infty}, X_1(t) \rangle  \geq0 \mbox{   for all $t$.}
\end{equation}

\begin{notation}\label{not1}
We note that, when $t=0$, the above implies that the initial datum $(w,x)\in \calh:=\R\times M_2$ must belong to the half-space
\begin{equation}\label{eq:defH+}
    \calh_+=\left\{(w,x)\in \calh:\;
    w +g_{\infty}x_0+ \< h_{\infty}, x_1 \> \geq 0 \right\}
\end{equation}
It is also convenient to introduce the open half-space
\begin{equation}\label{eq:defH++}
    \calh_{++}=\left\{(w,x)\in \calh:\;
    w +g_{\infty}x_0+ \< h_{\infty}, x_1 \> > 0 \right\}
\end{equation}
Moreover, defining the linear map $\Gamma_\infty:\calh \to \R$ as
\begin{equation}\label{eq:defGamma}
 \Gamma_{\infty}(w,x):= w+g_{\infty} x_0+\langle h_{\infty},x_1 \rangle,
\end{equation}
we have $\calh_+=\left\{\Gamma_\infty \ge 0\right\}$,
$\calh_{++}=\left\{\Gamma_\infty > 0\right\}$.
We observe that $\calh_+$ contains the cone of positive functions in $\calh$ if and only if $\phi \ge 0$ a.e..
\end{notation}

\section{Solving the problem}
\label{SECTION_INFINITE_tau_R}

\subsection{Statement of the reformulated problem}
\label{SSE:PBREFORMOLATED}

Using the results of the previous Section \ref{Section_Mathematical_tools},
the problem exposed in Section \ref{Problem formulation}
can be reformulated as follows.


\begin{problem}\label{Problem1}
The state space is $\calh:=\R\times M_2$. The control space is
$U:=\R_+\times \R_+\times \R^n$.
The state equation is
\begin{eqnarray}\label{DYN_W_X_INFINITE_RETIREMENT_II}
\begin{split}
\left\{\begin{array}{ll}
dW(t) =&  \left[ (r+\delta) W(t)+ \theta^\top(t) (\mu-r \mathbf 1)  +  X_0(t) - c(t) - \delta B(t) \right] dt  \\
  &+  \theta^\top (t) \sigma dZ(t),\\
  dX(t) =& A X(t) dt + \big(C X(t) \big)^\top dZ_t,\\
  W(0)=& w,\\
X_0(0) =& x_0,\quad \quad
 X_1(s) = x_1(s) \mbox{ for $s \in  [-d,0)$}{.}
  \end{array}\right. \end{split}
\end{eqnarray}
Denote by $W^{w,x}(s;c,B, \theta)$ the solution at time $s$ of
the first equation in (\ref{DYN_W_X_INFINITE_RETIREMENT_II})
starting at time $0$ in $\left(w,x\right)$ and following the strategy $\left(c,B,\theta \right)$, and by $X^{x} (s)$ the solution at time $s$ of the second equation in (\ref{DYN_W_X_INFINITE_RETIREMENT_II}), starting at time $0$ in $x$.
The set of admissible controls is (see (\ref{DEF_PI_FIRST_DEFINITION})
and Proposition \ref{PROP_CONSTRAINT_INTERPRETATION_INF_RET}):
\begin{equation*}\label{DEF_PI_SECOND_DEFINITION}
\begin{split}
\Pi\left(w,x_0,x_1\right)=\Pi\left(w,x\right)= \Bigg\{ &  \mathbb F-\mbox{adapted} \ c(\cdot), B(\cdot), \theta(\cdot), \ \mbox{such that:} \ c(\cdot), B(\cdot) \in L^1 (\Omega \times [0, +\infty),\mathbb R_{+}); \\ &\theta(\cdot) \in L^2(\Omega \times [0, +\infty) , \mathbb R^n);\\
 & W^{w,x}\left(t; c,B,\theta\right) +    g_{\infty}X^x_0(t)+ \langle h_{\infty}, X^x_1(t) \rangle \geq 0\,\quad \forall t \geq 0\Bigg\},
\end{split}
\end{equation*}
(the last constraint being equivalent to ask
$(W(t),X(t))\in  \calh_+$ for every $t \ge 0$),
find a strategy $\left(c,B,\theta \right)\in \Pi\left(w,x_0,x_1\right)$
maximizing the functional
\begin{equation}\label{DEF_J_INF_RET}
J \big(w,x; c,B, \theta\big) := \mathbb E \left(\int_{0}^{+\infty} e^{-(\rho+ \delta) t }
\left( \frac{c(t)^{1-\gamma}}{1-\gamma}
+ \delta \frac{\big(k B(t)\big)^{1-\gamma}}{1-\gamma}\right) dt
\right),
\end{equation}
assuming Hypotheses \ref{hp:S}, \ref{hp:tau} and \ref{HYP_BETA-BETA_INFTY}.
\end{problem}
Note that the functional $J$ may possibly take value $-\infty$ (e.g. when $\gamma>1$ and both $c(\cdot)$ and $B(\cdot)$ are both identically zero) or $+\infty$ (when $\gamma \in (0,1)$ but, thanks to Hypothesis \ref{HYP_BETA-BETA_INFTY},
this will be proved to be impossible, see Corollary \ref{cr:FINITENESS_VALUE_FUNCTION}
and Proposition \ref{pr:Vfinitegamma>1}).

We solve the problem by using the dynamic programming method.
Define, for $(w,x) \in \calh_+$, the value function $V(w,x)$ as
\begin{align}\label{DEF_VALUE_FUNCTION_INF_RET}
V\left(w,x\right):&= \sup_{\big(c,B, \theta\big) \in   \Pi\left(w,x \right)} J \big( w,x; c,B, \theta\big).
\end{align}
Similarly to what we noted above for the functional $J$ we see that, up to now,
$V$ may possibly take the values $-\infty$ or $+\infty$ in $\calh_+$.

\begin{notation}\label{not:shorthand}
Sometimes, given an initial point $(w,x)\in \calh_+$ and an admissible strategy
$(c,B,\theta)\in \Pi(w,x)$, for readability, we will use the shorthand notations:
$$
\pi:=(c,B,\theta)
$$
and
\begin{align*}
 W_{\pi}(s) :=  W^{w,x}(s;c,B,\theta), \qquad
 X(s ) :=  X^{x} (s),
\end{align*}
shrinking the dependence on the controls and omitting
the dependence on the initial conditions.
\end{notation}

\bigskip

\subsection{The HJB equation and its explicit solution}
\label{SSE:HJBEXPLICIT}

\begin{notation}
Let $p= (p_1,p_2)$
be a generic vector of $\calh=\mathbb R \times M_2$,
and let $S(2)$ denote the space of real symmetric matrices of dimension $2$, and
$P$ an element of $S(2)$, with
\begin{equation*}\label{NOTATIONS_P}
P=\left(\begin{array}{cc}
P_{11}&  P_{12}  \\
 P_{2 1}  &  P_{22}
\end{array} \right){.}
\end{equation*}
For any given function $u: \calh \longrightarrow \mathbb R$,
we denote by $Du = \left(u_w,u_{x}\right)= \left(u_w,(u_{x_0},u_{x_1})\right)\in \calh$
its gradient and by
$$D^2_{wx_0} u = \left(\begin{array}{cc}
 u_{ww}&  u_{wx_0}  \\
 u_{x_0 w}  &  u_{x_0 x_0}
\end{array} \right)\in S(2)
$$
its second derivatives with respect to the first two components $(w,x_0)$,
whenever they exist.
\end{notation}

\bigskip
The HJB equation  associated with Problem \ref{Problem1} is
\begin{equation} \label{eq:HJB1}
(\rho+\delta) v = \mathbb H\left(w,x,D v, D^2_{wx_0} v\right),
\end{equation}
where the Hamiltonian
$\mathbb H: \mathbb R \times M_2 \times  (\mathbb R \times \cald (A^*))   \times S(2) \longrightarrow \overline\R$ \footnote{Note that we allow the Hamiltonian to take values in $\overline \R$, hence to be possibly $\pm\infty$.} is, informally speaking, defined as follows
\begin{align}\label{DEF_HAMILTONIAN_INF_RETnew}
\mathbb H(w,x,p,P)  :=
\sup_{(c,B,\theta)\in \R_+\times \R_+\times \mathbb R^n}
\<\calb(w,x,\theta,c,B),p\>_{\R\times M_2}+ \frac12 Tr \Sigma(\theta,x_0) P \Sigma^*(\theta,x_0) + U(c,B)
\end{align}
where we call $\calb(w,x,\theta,c,B)$ and $\Sigma(\theta,x_0)$,\footnote{Note that $\Sigma (\theta,x_0)$ is a $2\times 2$ matrix since the component $X_1$ of the state in \eqref{DYN_W_X_INFINITE_RETIREMENT_II} has no diffusion coefficient.}  the drift and the diffusion of \eqref{DYN_W_X_INFINITE_RETIREMENT_II}, while $U$
is the utility function in the integral
\eqref{DEF_J_INF_RET}.
To compute the Hamiltonian we separate the part depending on the controls from the other one, which can be taken out of the supremum. Hence we write:
\begin{align}\label{DEF_HAMILTONIAN_INF_RET}
\mathbb H(w,x,p,P)  :=
\mathbb H_1(w,x,p,P_{22}) + \mathbb H_{max}(x_0,p_1,P_{11},P_{12}),
\end{align}
where
\begin{equation}\label{DEF_H_1_INF_RET}
\mathbb H_1(w,x,p,P_{22}) :=  (r+\delta) w p_1+x_0 p_1 + \langle x,A^*p_2  \rangle_{M_2}+
\frac{1}{2}   \sigma_y^\top  \sigma_y x_0^2 P_{22},
\end{equation}
and
\begin{equation}\label{DEF_H_MAX_INF_RET}
 \mathbb H_{max}(x_0,p_1,P_{11},P_{12}) := \sup_{(c,B,\theta)\in \R_+\times \R_+ \times \mathbb R^n} \mathbb H_{cv}(x_0,p_1,P_{11},P_{12}; c,B,\theta)
\end{equation}
with
\begin{align}\label{DEF_H_cv_INF_RET}
\mathbb H_{cv}
(x_0,p_1,P_{11},P_{12}; c,B,\theta):= & \frac{ c^{1-\gamma}}{1-\gamma}
+ \frac{\delta \big(k B\big)^{1-\gamma}}{1-\gamma}
+[\theta^\top(\mu-r\mathbf 1) - c-\delta B]p_1
\\
&+\frac{1}{2} \theta^\top  \sigma  \sigma^\top   \theta P_{11}  +  \theta^\top  \sigma \sigma_y x_0   P_{12}
\nonumber
\\
\nonumber
:= &
\frac{ c^{1-\gamma}}{1-\gamma}- c p_1
+ \frac{\delta \big(k B\big)^{1-\gamma}}{1-\gamma} -\delta B p_1    \\
\nonumber
&+\theta^\top(\mu-r\mathbf 1)p_1
+\frac{1}{2} \theta^\top  \sigma  \sigma^\top   \theta P_{11}  +  \theta^\top  \sigma \sigma_y x_0   P_{12}.
\end{align}

Now note that, thanks to the last equality above, whenever $p_1>0$ and $P_{11}<0$,
the maximum in (\ref{DEF_H_MAX_INF_RET}) is achieved at
\begin{eqnarray}\label{MAX_POINTS_HAMILTONIAN__INF_RET}
\left\{
\begin{split}
c^{*} &:=  
p_1^{-\frac{1}{\gamma}}  {,}  \\
B^{*}&:=k^{ -b} p_1^{-\frac{1}{\gamma}}   {,}  \\
\theta^{*} &:=   - (\sigma\sigma^\top)^{-1} \frac{(\mu-r\mathbf 1) p_1 + \sigma  \sigma_y x_0  P_{12}}
{P_{11}},
\end{split}
\right.
\end{eqnarray}
where
\begin{equation}\label{DEB_b}
b= 1-\frac{1}{\gamma}.
\end{equation}
Hence, for $p_1>0$ and $P_{11}<0$ we have, by simple computations,
\begin{eqnarray}\label{EQ_EXPL_HAMILTONIAN_INF_RET}
\begin{split}
\mathbb H(w,x,p,P)=
&  (r+\delta) w p_1+x_0 p_1+\langle x,A^*p_2  \rangle_{M_2}
+\frac{\gamma}{1-\gamma}  p_1^{b} \big(
1+\delta k^{-b}  \big)\\
&+\frac{1}{2}   \sigma_y^\top  \sigma_y x_0^2 P_{22}\\
&-\frac{1}{2P_{11}}
\left[(\mu-r \mathbf 1)  p_1+ \sigma \sigma_y x_0 P_{12}\right]^\top
(\sigma\sigma^\top)^{-1}
\left[(\mu-r \mathbf 1) p_1+ \sigma \sigma_y  x_0 P_{12}\right].
\end{split}
\end{eqnarray}
Therefore, if the unknown $v$ satisfies $v_w>0$ and $v_{ww}< 0$,
the HJB equation in (\ref{eq:HJB1}) reads
\begin{eqnarray}\label{HJB_CLEAN_INF_RET}
\begin{split}
(\rho+\delta) v=
&  (r+\delta) w v_w+x_0  v_w +\langle x,A^* v_x  \rangle_{M_2}+
\frac{\gamma}{1-\gamma}  v_w^{b} \big(1 
+\delta k^{-b}  \big)\\
&+\frac{1}{2}   \sigma_y^\top  \sigma_y x_0^2 v_{x_0x_0}\\
&-\frac{1}{2v_{ww}}
\left[(\mu-r \mathbf 1)  v_w+ \sigma \sigma_y x_0 v_{w x_0}\right]^\top
(\sigma\sigma^\top)^{-1}
\left[(\mu-r \mathbf 1) v_w+ \sigma \sigma_y  x_0 v_{w x_0}\right].
\end{split}
\end{eqnarray}
On the other hand we must also note that, when $p_1<0$ or $P_{11}>0$,
the Hamiltonian $\mathbb H$ is $+\infty$, while, when $p_1P_{11}=0$,
different cases may arise depending on $\gamma$ and on the sign of other terms.

%
\begin{definition}\label{DEF_SUPERSOLUTION_INF_RET}
A function $ u : \calh_{++} \longrightarrow \mathbb R $
is a \emph{classical solution}
of the HJB equation (\ref{eq:HJB1}) in $\calh_{++}$
if the following holds:
\begin{itemize}
  \item[(i)] $u$ is continuously Fr\'{e}chet differentiable in $\calh_{++}$ and admits
      continuous second derivatives with respect to $(w,x_0)$ in $\calh_{++}$;
  \item[(ii)] $u_x(w,x) \in \cald(A^*)$ for every $(w,x)\in \calh_{++}$
  and $A^*u_x$ is continuous in $\calh_{++}$;
  \item[(iii)]
for all $(w,x) \in \calh_{++} $ we have
\begin{equation}\label{EQ_SOLUTION_INF_RET}
\begin{split}
(\rho + \delta) u -
\mathbb H \big(w,x,Du, D^2_{wx_0} u\big)= 0.
\end{split}
\end{equation}
\end{itemize}
\end{definition}

\begin{remark}\label{REMDEF_SUPERSOLUTION_INF_RET}
It is important to note that, in the Hamiltonian $\mathbb H_1$,
the natural infinite dimensional term involving $A$ should be written as
$\<Ax,p_2\>_{M_2}$, which would make sense only for $x \in \cald(A)\cap  \calh_{++}$.
Here we decided to write it in the different form $\<x,A^*p_2\>_{M_2}$,
which makes sense for all $x\in \calh_{++}$ but only if $p_2$ (which stands for $u_{x}$)
belongs to $\cald(A^*)$. This choice substantially means that,
on classical solutions, we assume a further regularity
on the gradient. This will simplify the application of Ito's formula in next proposition and will be enough for our needs, as the explicit
solution that we compute satisfies such a regularity.
\end{remark}


\begin{remark}\label{REMzerosol}
In Definition \ref{DEF_SUPERSOLUTION_INF_RET} above (classical solution $u$ of the HJB equation \eqref{eq:HJB1})
we did not include the requirement $u_w>0$ and $u_{ww}< 0$
even if it seems natural in this context.
A reason is that, in the case $\gamma>1$,
the function $u\equiv 0$ is indeed a solution
(see Subsection \ref{SSE:GAMMA>1}).
\end{remark}

\begin{proposition}\label{PROP_COMPARISON_FINITENESS_VAL_FUN_INF_RET}
Define, for $(w,x) \in \calh_{++}$,
\begin{equation}\label{EQ_GUESS_BAR V_INF_RET}
\bar v (w,x):=\frac{f_{\infty}^{\gamma}
\Gamma_\infty^{1-\gamma}(w,x)}{1-\gamma},
\end{equation}
with $f_{\infty}>0$
defined as
\begin{equation}\label{EXPRESSIONS_f_INF_RET}
 f_{\infty}:= (1 + \delta k^{-b}) \nu,
 \end{equation}
where
\begin{equation}\label{DEF_nu}
\nu  := \frac{\gamma}{\rho + \delta -(1-\gamma) (r + \delta +\frac{\kappa^\top \kappa}{2\gamma })}>0,
\end{equation}
$\Gamma_{\infty}$ defined in Notation \ref{not1},
and $b$ as in (\ref{DEB_b}).
Then, $\bar v$ is a classical solution of the HJB equation
(\ref{eq:HJB1}) on $\calh_{++}$.
%
\end{proposition}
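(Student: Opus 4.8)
The plan is to verify directly the three requirements of Definition~\ref{DEF_SUPERSOLUTION_INF_RET}, exploiting the fact that $\bar v$ depends on $(w,x)$ only through the bounded linear functional $\Gamma_\infty$ of \eqref{eq:defGamma}. Write $\bar v=F\circ\Gamma_\infty$ with $F(z):=\tfrac{f_\infty^\gamma}{1-\gamma}z^{1-\gamma}$, which is $C^\infty$ on $(0,+\infty)$. Since $\Gamma_\infty>0$ on $\calh_{++}$ and $f_\infty>0$ (because $\nu>0$ by Hypothesis~\ref{HYP_BETA-BETA_INFTY}-(ii)), the chain rule gives, on $\calh_{++}$,
\begin{equation*}
\bar v_w=f_\infty^\gamma\,\Gamma_\infty^{-\gamma}>0,\qquad
\bar v_x=f_\infty^\gamma\,\Gamma_\infty^{-\gamma}\,(g_\infty,h_\infty),\qquad
\bar v_{ww}=-\gamma f_\infty^\gamma\,\Gamma_\infty^{-\gamma-1}<0,
\end{equation*}
together with $\bar v_{wx_0}=g_\infty\bar v_{ww}$ and $\bar v_{x_0x_0}=g_\infty^2\bar v_{ww}$, all Fr\'echet-continuous on $\calh_{++}$; this gives (i). Since $\mathcal D(A^*)$ is a linear subspace and $(g_\infty,h_\infty)\in\mathcal D(A^*)$ by Lemma~\ref{LEMMA_DIFF_EQ_G_INFTY_H_INFTY}, we get $\bar v_x\in\mathcal D(A^*)$ for every $(w,x)\in\calh_{++}$, with $A^*\bar v_x=f_\infty^\gamma\Gamma_\infty^{-\gamma}A^*(g_\infty,h_\infty)$ continuous; this gives (ii).

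For (iii), since $\bar v_w>0$ and $\bar v_{ww}<0$, the Hamiltonian at $(w,x,D\bar v,D^2_{wx_0}\bar v)$ is given by the explicit expression \eqref{EQ_EXPL_HAMILTONIAN_INF_RET}, so it suffices to plug the derivatives above into the reduced HJB equation \eqref{HJB_CLEAN_INF_RET}. Two computations drive the substitution. First, combining Proposition~\ref{pr:adjoint} with the two relations $h_\infty'=g_\infty\phi-(r+\delta)h_\infty$ and $h_\infty(0)=\beta g_\infty-1$ of Lemma~\ref{LEMMA_DIFF_EQ_G_INFTY_H_INFTY} and the definition \eqref{DEF_BETA} of $\beta$ yields
\begin{equation*}
A^*(g_\infty,h_\infty)=\bigl((r+\delta+\sigma_y^\top\kappa)\,g_\infty-1,\ (r+\delta)\,h_\infty\bigr),
\end{equation*}
whence $\langle x,A^*\bar v_x\rangle_{M_2}=f_\infty^\gamma\Gamma_\infty^{-\gamma}\bigl[(r+\delta)\Gamma_\infty-x_0+\sigma_y^\top\kappa\,g_\infty x_0\bigr]$. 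Second, using $\kappa=\sigma^{-1}(\mu-r\mathbf 1)$ one has $(\mu-r\mathbf 1)^\top(\sigma\sigma^\top)^{-1}(\mu-r\mathbf 1)=\kappa^\top\kappa$, $(\mu-r\mathbf 1)^\top(\sigma\sigma^\top)^{-1}\sigma\sigma_y=\sigma_y^\top\kappa$, $(\sigma\sigma_y)^\top(\sigma\sigma^\top)^{-1}\sigma\sigma_y=\sigma_y^\top\sigma_y$, which, together with $\bar v_w=-\tfrac{\Gamma_\infty}{\gamma}\bar v_{ww}$ and $\bar v_{wx_0}=g_\infty\bar v_{ww}$, turn the last line of \eqref{HJB_CLEAN_INF_RET} into $f_\infty^\gamma\bigl[\tfrac{\kappa^\top\kappa}{2\gamma}\Gamma_\infty^{1-\gamma}-g_\infty x_0\sigma_y^\top\kappa\,\Gamma_\infty^{-\gamma}+\tfrac\gamma2 g_\infty^2 x_0^2\sigma_y^\top\sigma_y\,\Gamma_\infty^{-\gamma-1}\bigr]$.

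Now collect all contributions in \eqref{HJB_CLEAN_INF_RET}. The $x_0^2$-term above cancels exactly against $\tfrac12\sigma_y^\top\sigma_y x_0^2\bar v_{x_0x_0}$; the $x_0$-linear terms cancel using the adjoint identity and the very definition of $\Gamma_\infty$; and, since $\bar v_w^{\,b}=f_\infty^{\gamma-1}\Gamma_\infty^{1-\gamma}$ with $b$ as in \eqref{DEB_b}, what remains is proportional to $f_\infty^\gamma\Gamma_\infty^{1-\gamma}$. Dividing through by $f_\infty^\gamma\Gamma_\infty^{1-\gamma}\neq0$ reduces the HJB equation to the scalar identity
\begin{equation*}
\frac{\rho+\delta}{1-\gamma}=(r+\delta)+\frac{\kappa^\top\kappa}{2\gamma}+\frac{\gamma}{1-\gamma}\cdot\frac{1+\delta k^{-b}}{f_\infty},
\end{equation*}
and since $f_\infty=(1+\delta k^{-b})\nu$ by \eqref{EXPRESSIONS_f_INF_RET}, this is equivalent to $\gamma/\nu=\rho+\delta-(1-\gamma)\bigl(r+\delta+\tfrac{\kappa^\top\kappa}{2\gamma}\bigr)$, i.e.\ exactly the definition \eqref{DEF_nu} of $\nu$; hence \eqref{EQ_SOLUTION_INF_RET} holds on $\calh_{++}$. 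The only delicate point is the bookkeeping of the several terms and the cancellation of all $x_0$-dependent contributions in the last two steps; this is not accidental but reflects the fact that $g_\infty,h_\infty$ were constructed in Lemma~\ref{LEMMA_DIFF_EQ_G_INFTY_H_INFTY} (following \cite{BGP}) precisely so that $\Gamma_\infty$ — financial wealth plus human capital — is the natural state variable on which the value function collapses, which is what forces the adjoint identity above. Once that structural fact is used, the verification is routine algebra.
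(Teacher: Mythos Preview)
Your proof is correct and follows essentially the same approach as the paper's: compute the derivatives of $\bar v=F\circ\Gamma_\infty$, use Lemma~\ref{LEMMA_DIFF_EQ_G_INFTY_H_INFTY} both for $(g_\infty,h_\infty)\in\mathcal D(A^*)$ and for the identities on $h_\infty'$ and $h_\infty(0)$, substitute into the explicit form \eqref{HJB_CLEAN_INF_RET}, and reduce to the definition \eqref{DEF_nu} of $\nu$. There is one harmless slip to fix: your intermediate expression for $\langle x,A^*\bar v_x\rangle_{M_2}$ should have $(r+\delta)(\Gamma_\infty-w)$ in place of $(r+\delta)\Gamma_\infty$ (since $g_\infty x_0+\langle h_\infty,x_1\rangle=\Gamma_\infty-w$), and the missing $-(r+\delta)w$ is then absorbed by the term $(r+\delta)w\,\bar v_w$ from $\mathbb H_1$, so your final scalar identity and the overall argument remain correct.
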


\begin{proof}
Let $\bar v $ as in (\ref{EQ_GUESS_BAR V_INF_RET}).
Since $\Gamma_\infty$ is linear, it is immediate to check that $\bar v$ satisfies the assumption required at point (i) of Definition \ref{DEF_SUPERSOLUTION_INF_RET}.
Moreover we have, in $\calh_{++}$,
					 \begin{align}\label{DERIVATIVES_BAR_V_INF_RET}
						\begin{split}
							\bar v_w(w,x)&=  f_{\infty}^{\gamma}
\Gamma_{\infty}^{-\gamma}(w,x),
\\
\bar v_x(w,x)&= f_{\infty}^{\gamma}\Gamma_{\infty}^{-\gamma}(w,x) (g_\infty{,}h_\infty),\\
							\bar v_{w w}(w,x)&= - \gamma f_{\infty}^{\gamma}\Gamma_{\infty}^{-\gamma-1}(w,x),\\
							\bar v_{w x_0}(w,x)&=   - \gamma f_{\infty}^{\gamma}\Gamma_{\infty}^{-\gamma-1}(w,x) g_{\infty},\\
							\bar v_{x_0 x_0}(w,x)&= - \gamma f_{\infty}^{\gamma}\Gamma_{\infty}^{-\gamma-1}(w,x)  g_{\infty}^2.
						\end{split}
					\end{align}
Hence, since $(g_\infty,h_\infty)\in \cald (A^*)$ by
Lemma \ref{LEMMA_DIFF_EQ_G_INFTY_H_INFTY}, then also point (ii)
of Definition \ref{DEF_SUPERSOLUTION_INF_RET} holds for $\bar v$.
Concerning point (iii) of Definition \ref{DEF_SUPERSOLUTION_INF_RET}
we observe first that $\bar v_w>0$ and $\bar v_{ww}<0$ on $\calh_{++}$
and then we can use the explicit expression of the HJB equation
\eqref{HJB_CLEAN_INF_RET} which holds in such a case.
Using the expression of $A^*$ given in Proposition \ref{pr:adjoint} we have, in $\calh_{++}$,
					\begin{align*}
						\langle x,A^*\bar v_x(w,x)  \rangle_{M_2} &= f_{\infty}^{\gamma} \Gamma_{\infty}^{-\gamma}(w,x)  \langle x, A^*(g_{\infty},  h_{\infty}) \rangle_{M_2}\\
						& = f_{\infty}^{\gamma} \Gamma_{\infty}^{-\gamma} (w,x)\left[ x_0\mu_y g_{\infty}+ x_0  h_{\infty}(0) +\langle x_1, -  h^{\prime}_{\infty} +g_{\infty}\phi\rangle\right]{.}
					\end{align*}
Moreover we have, omitting the variables $(w,x)$ for simplicity of notation,
$$
\frac{\gamma}{1-\gamma}  \bar v_w^{b} \big(1+\delta k^{-b}  \big)
=
 \frac{\gamma}{1-\gamma} f_{\infty}^{\gamma-1} \Gamma_{\infty}^{1-\gamma}
(1+\delta k^{-b}  )
$$
$$
\frac{1}{2} \sigma_y^\top  \sigma_y x_0^2 \bar v_{x_0x_0}
=
-\frac{\gamma}{2} \sigma_y^\top  \sigma_y x_0^2
f_{\infty}^{\gamma}\Gamma_{\infty}^{-\gamma-1}g_{\infty}^2
$$
and, using, in the last line below, the definition of $\kappa$ in \eqref{DEF_KAPPA},
\begin{align*}
&- \frac{1}{2\bar v_{ww}}
\left[(\mu-r \mathbf 1) \bar  v_w+ \sigma \sigma_y x_0 \bar v_{w x_0}\right]^\top
(\sigma\sigma^\top)^{-1}
\left[(\mu-r \mathbf 1) \bar v_w+ \sigma \sigma_y  x_0 \bar v_{w x_0}\right]
=
\\
&=
\frac{1}{2\gamma}f_{\infty}^{-\gamma}\Gamma_{\infty}^{1+\gamma}
\left[
(\mu -r {\bf 1}) f_{\infty}^{\gamma}\Gamma_{\infty}^{-\gamma}
-\gamma x_0 f_{\infty}^{\gamma}\Gamma_{\infty}^{-1-\gamma}g_\infty \sigma \sigma_y
\right]^\top
(\sigma \sigma^\top)^{-1}
\left[
(\mu -r {\bf 1}) f_{\infty}^{\gamma}\Gamma_{\infty}^{-\gamma}
-\gamma x_0 f_{\infty}^{\gamma}\Gamma_{\infty}^{-1-\gamma}g_\infty \sigma \sigma_y
\right]=
\\
&=
\frac{1}{2\gamma}f_{\infty}^{\gamma}\Gamma_{\infty}^{1-\gamma}
\left[
\mu -r {\bf 1} -\gamma x_0 \Gamma_{\infty}^{-1}g_\infty \sigma \sigma_y
\right]^\top
(\sigma \sigma^\top)^{-1}
\left[
\mu -r {\bf 1} -\gamma x_0 \Gamma_{\infty}^{-1}g_\infty \sigma \sigma_y
\right]
=
\\
&=
\frac{1}{2\gamma}f_{\infty}^{\gamma}\Gamma_{\infty}^{1-\gamma}
\left[\kappa^\top \kappa- \gamma x_0 \Gamma_{\infty}^{-1}g_\infty
\left(
\kappa^\top \sigma_y + \sigma_y^\top \kappa
\right)
+ \gamma^2 x_0^2 \Gamma_{\infty}^{-2}g_\infty^2
\sigma_y^\top \sigma_y
\right].
\end{align*}
Now we substitute $\bar v$ inside \eqref{HJB_CLEAN_INF_RET}
using the last three equalities. Then we multiply by $f_{\infty}^{-\gamma}\Gamma_{\infty}^{\gamma}(w,x)$,
obtaining, in $\calh_{++}$,
\begin{align}
\begin{split}
\frac{(\rho +\delta)}{ 1-\gamma}\Gamma_{\infty} (w,x)
=
(r+\delta) w +  x_0
+x_0 \mu_y g_{\infty}  + x_0  h_{\infty}(0)
+\< x_1,-  h^{\prime}_{\infty}+ g_{\infty}\phi\>
\\
+ \frac{\gamma}{1-\gamma} f_{\infty}^{-1} \Gamma_{\infty}(w,x)
(1+\delta k^{-b}  )
 +\frac{\kappa^\top \kappa}{2\gamma }\Gamma_{\infty}  (w,x) -\kappa^\top\sigma_y x_0 g_{\infty}.
						\end{split}
					\end{align}
					Recalling (\ref{EQ_g_h_infty})
					the equality above can be rewritten as
					\begin{align}
						\begin{split}
\frac{(\rho +\delta)}{ 1-\gamma}\Gamma_{\infty}(w,x)
=
(r+\delta) w +x_0 +x_0 \mu_y g_{\infty}  + x_0 \big(   \beta g_{\infty} -1\big) +(r+\delta)\langle x_1,h_{\infty} \rangle
\\
+\frac{\gamma}{1-\gamma} f_{\infty}^{-1} \Gamma_{\infty}(w,x)(1
							+\delta k^{-b}  )
+\frac{\kappa^\top \kappa}{2\gamma }\Gamma_{\infty}  (w,x) -\kappa^\top\sigma_y x_0  g_{\infty},
						\end{split}
\end{align}
which, by the definitions of $\beta$ in \eqref{DEF_BETA} and of $\Gamma_{\infty}$
in \eqref{eq:defGamma}, reads
\begin{align*}
\begin{split}
\frac{(\rho +\delta)}{ 1-\gamma}\Gamma_{\infty}(w,x)
=
(r+\delta) \Gamma_{\infty}(w,x)+ \frac{\gamma}{1-\gamma} f_{\infty}^{-1} \Gamma_{\infty}(w,x)(1 							 +\delta k^{-b}  )
+ \frac{\kappa^\top \kappa}{2\gamma }\Gamma_{\infty}(w,x) .
						\end{split}
					\end{align*}
We can now cancel $\Gamma_\infty(w,x)$, since it is strictly positive,
and use the definition of $f_\infty$ in \eqref{EXPRESSIONS_f_INF_RET},
getting
\begin{align*}
\begin{split}
\frac{(\rho +\delta)}{ 1-\gamma}
- (r+\delta) -\frac{\gamma}{1-\gamma}\nu^{-1}
-\frac{\kappa^\top \kappa}{2\gamma }=0.
						\end{split}
					\end{align*}
This holds true thanks to the definition of $\nu$ given in (\ref{DEF_nu}),
so the claim is proved.
\end{proof}

\begin{remark}\label{rm:vonboundary}
The function $\bar v$ can be defined also in $\calh_+$ by setting,
on $\partial\calh_+=\{\Gamma_\infty(w,x)=0\}$,
$$\bar v(w,x)=0, \quad \hbox{when $\gamma\in (0,1)$},
\qquad and \qquad
\bar v(w,x)=-\infty, \quad \hbox{when $\gamma\in (1,+\infty)$}.
$$
From now on we will consider $\bar v$ defined on $\calh_+$.
\end{remark}

In next subsection we use the function $\bar v$ to prove the fundamental identity, the key step to get the Verification Theorem and the existence of optimal feedbacks, see
Theorems \ref{th:VERIFICATION_THEOREM_INF_RET} and \ref{th:VERIFICATIONgamma>1}.

\subsection{A lemma on the evolution of $\Gamma_\infty$ on the admissible paths}
\label{SSE:LEMMAGAMMA}

We need the following lemma.

\begin{lemma}\label{lm:Gammabar}
Fix $\left(w,x\right) \in \calh_{+}$ and let  $\pi=\left(c,B,\theta\right) \in \Pi\left(w,x\right)$. Then we have the following:
\begin{itemize}
  \item[(i)] The process
$$
\bar\Gamma_\infty (t):=\Gamma_\infty (W^{w,x}(t;c,B, \theta),X^{x} (t))
$$
satisfies the equation
	\begin{align}\label{eq:GammaProcessSDE}
	\begin{split}
	d\bar\Gamma_{\infty} (t)
	=
	&\left[(r+\delta)\bar\Gamma_{\infty} (t)    - c(t) - \delta B(t)
	+ \Big(  \theta^\top (t) \sigma +  g_{\infty}  X_0(t)  \sigma_y^\top \Big)\kappa\right] dt\\
	&+ \left[\theta^\top(t) \sigma +  g_{\infty}   X_0(t)  \sigma_y^\top \right]dZ_t,
	\end{split}
	\end{align}
  \item[(ii)]
Assume that $\Gamma_\infty(w,x)=0$, i.e. that $\bar \Gamma_\infty(0)=0$.
Then for every $t\ge 0$ it must be
$$
\bar \Gamma_\infty(t)=0,  \qquad \P-a.s.
$$
and
\begin{equation}\label{eq:zerostrategy}
c(t,\omega)=0, \quad B(t,\omega)=0,
\quad
\theta^\top(t) \sigma +  g_{\infty}   X_0(t)  \sigma_y^\top =0,
\qquad
dt\otimes \P-a.e. \; in \; [0,+\infty) \times \Omega
\end{equation}
Let now $\Gamma_\infty(w,x)>0$ and set $\tau$ be the first exit time of $\left(W_\pi(\cdot),X(\cdot)\right)$ from $\calh_{++}$.
Then for every $t\ge 0$
$$
\1_{\{\tau<t\}}\bar \Gamma_\infty(t)=0,  \qquad \P-a.s.
$$
and
$$
\1_{\{\tau<t\}}(\omega) c(t,\omega)=0, \quad \1_{\{\tau<t\}}(\omega)B(t,\omega)=0,
\quad
\1_{\{\tau<t\}}\left[\theta^\top(t) \sigma +  g_{\infty}   X_0(t)  \sigma_y^\top\right] =0,
$$
$dt\otimes \P-a.e. \; in \; [0,+\infty) \times \Omega$.
\end{itemize}
\end{lemma}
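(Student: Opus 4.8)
Part~(i) is obtained by Itô's formula, and part~(ii) follows, after deflating $\bar\Gamma_\infty$ by the state-price density $\xi$, from the fact that the state constraint forces the deflated process to be a nonnegative supermartingale. For~(i), write $\bar\Gamma_\infty(t)=W^{w,x}(t;c,B,\theta)+\langle X^{x}(t),(g_\infty,h_\infty)\rangle_{M_2}$. Since $(g_\infty,h_\infty)\in\mathcal D(A^*)$ by Lemma~\ref{LEMMA_DIFF_EQ_G_INFTY_H_INFTY}, the analytically weak formulation of the mild solution of \eqref{INFINITE_DIMENSIONAL_STATE_EQUATION} (see \cite{DAPRATO_ZABCZYK_RED_BOOK}) shows that $t\mapsto\langle X^{x}(t),(g_\infty,h_\infty)\rangle_{M_2}$ is a real Itô process with differential $\langle X^{x}(t),A^*(g_\infty,h_\infty)\rangle_{M_2}\,dt+g_\infty X_0(t)\sigma_y^\top\,dZ(t)$. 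Inserting $A^*(g_\infty,h_\infty)$ from Proposition~\ref{pr:adjoint} and using \eqref{EQ_g_h_infty} (which turns $-h_\infty'+g_\infty\phi$ into $(r+\delta)h_\infty$ and yields $h_\infty(0)=\beta g_\infty-1$) together with $\beta=r+\delta-\mu_y+\sigma_y^\top\kappa$, then adding the result to the equation for $dW$ in \eqref{DYN_W_X_INFINITE_RETIREMENT_II}, the two $X_0(t)$ drift contributions cancel, and after writing $\mu-r\mathbf{1}=\sigma\kappa$ one obtains exactly \eqref{eq:GammaProcessSDE}. (Alternatively one may avoid the infinite-dimensional Itô formula and differentiate $\langle h_\infty,X_1(t)\rangle=\int_{-d}^0 h_\infty(s)X_0(t+s)\,ds$ directly, using $h_\infty(-d)=0$ and \eqref{EQ_g_h_infty}.)

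For~(ii), apply Itô's product rule to $\xi(t)\bar\Gamma_\infty(t)$ using \eqref{eq:GammaProcessSDE} and \eqref{DYN_STATE_PRICE_DENSITY}; the $(r+\delta)$-drift and the $\kappa$-drift cancel, leaving
\[
d\big(\xi(t)\bar\Gamma_\infty(t)\big)=-\xi(t)\big(c(t)+\delta B(t)\big)\,dt+\xi(t)\big[\theta^\top(t)\sigma+g_\infty X_0(t)\sigma_y^\top-\bar\Gamma_\infty(t)\kappa^\top\big]\,dZ(t).
\]
Hence $Y(t):=\xi(t)\bar\Gamma_\infty(t)+\int_0^t\xi(s)\big(c(s)+\delta B(s)\big)\,ds=\Gamma_\infty(w,x)+N(t)$ with $N$ a local martingale and $N(0)=0$. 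By admissibility $\bar\Gamma_\infty\ge0$, while $\xi>0$ and $c,B\ge0$; thus $Y\ge0$, so $N=Y-\Gamma_\infty(w,x)$ is bounded below, hence a supermartingale, making $Y$ a nonnegative supermartingale with $\mathbb E[Y(t)]\le\Gamma_\infty(w,x)$ for every $t$.

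If $\Gamma_\infty(w,x)=0$ then $0\le\mathbb E[Y(t)]\le0$, so $Y(t)=0$ a.s.; since $\xi>0$ this gives $\bar\Gamma_\infty(t)=0$ a.s. and $\int_0^t\xi(s)(c(s)+\delta B(s))\,ds=0$ a.s., whence $c=B=0$ $dt\otimes\P$-a.e., and as $\bar\Gamma_\infty\equiv0$ (for every $t$, by path continuity) its quadratic variation vanishes, forcing $\theta^\top(t)\sigma+g_\infty X_0(t)\sigma_y^\top=0$ $dt\otimes\P$-a.e., which is \eqref{eq:zerostrategy}. If $\Gamma_\infty(w,x)>0$, note that $\tau$ is the first time the continuous process $\bar\Gamma_\infty$ hits $0$, so $\bar\Gamma_\infty(\tau)=0$ on $\{\tau<+\infty\}$; optional sampling of the nonnegative supermartingale $Y$ at $\tau\le\tau\vee t$ gives $\mathbb E\big[Y(\tau\vee t)-Y(\tau)\mid\mathcal F_\tau\big]\le0$, while $Y(\tau\vee t)-Y(\tau)=\1_{\{\tau\le t\}}\big(\xi(t)\bar\Gamma_\infty(t)+\int_\tau^t\xi(s)(c(s)+\delta B(s))\,ds\big)\ge0$, so this difference vanishes a.s.; hence $\1_{\{\tau\le t\}}\bar\Gamma_\infty(t)=0$ and $\1_{\{\tau\le t\}}\int_\tau^t\xi(c+\delta B)\,ds=0$ a.s. Running this over rational $t$ and invoking path continuity and monotonicity, $\bar\Gamma_\infty\equiv0$ and $c=B=0$ on $[\tau,+\infty)$, and the vanishing-quadratic-variation argument then forces $\theta^\top\sigma+g_\infty X_0\sigma_y^\top=0$ on $[\tau,+\infty)$; since $\{\tau<t\}\subseteq\{\tau\le t\}$ (and $\{\tau=+\infty\}$ is vacuous) this is the stated conclusion. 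The one genuine technicality is this martingale bookkeeping: as $\theta\in L^2$ and $c,B\in L^1$ only, $N$ need not be a true martingale, so one must pass through ``a local martingale bounded below is a supermartingale'' and use optional sampling for nonnegative supermartingales (valid up to $+\infty$); everything else is routine once \eqref{eq:GammaProcessSDE} is established.
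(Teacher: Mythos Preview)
Your argument for part~(i) is essentially identical to the paper's: both compute $d\langle X(t),(g_\infty,h_\infty)\rangle_{M_2}$ using $(g_\infty,h_\infty)\in\mathcal D(A^*)$ and the identities \eqref{EQ_g_h_infty}, then add $dW$ and simplify via $\mu_y+\beta=r+\delta+\sigma_y^\top\kappa$.

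For part~(ii) you take a genuinely different route. The paper applies Girsanov on a finite window $[0,T]$, introducing the equivalent measure $\tilde\P_T$ under which $\tilde Z(t)=Z(t)+\kappa t$ is Brownian, so that $\Gamma_\infty^0(t):=e^{-(r+\delta)t}\bar\Gamma_\infty(t)$ becomes a $\tilde\P_T$-supermartingale on $[0,T]$; it then runs optional sampling between $\tau\wedge T$ and $\tau_2\le T$, deduces \eqref{eq:GammaInd0}--\eqref{theta0PT} under $\tilde\P_T$, transfers back to $\P$ by equivalence, and concludes by arbitrariness of $T$. You instead stay under $\P$ and multiply by the state-price density $\xi$: Itô's product rule gives $d(\xi\bar\Gamma_\infty)=-\xi(c+\delta B)\,dt+(\text{local martingale})$, so $Y(t)=\xi(t)\bar\Gamma_\infty(t)+\int_0^t\xi(c+\delta B)\,ds$ is a nonnegative local martingale, hence a supermartingale by Fatou; optional sampling of $Y$ at $\tau$ and $\tau\vee t$ then yields the conclusions directly, with the vanishing-diffusion part read off from the zero quadratic variation of $\bar\Gamma_\infty$ after $\tau$. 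The two arguments are of course dual---your $\xi$ encodes both the Girsanov density and the discount factor $e^{-(r+\delta)t}$---but your version avoids the measure change and the finite-horizon bookkeeping, at the cost of invoking the ``local martingale bounded below $\Rightarrow$ supermartingale'' lemma and optional sampling for nonnegative supermartingales at possibly infinite stopping times. Both are standard, and your proof is correct.
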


\begin{proof}
Fix $\left(w,x\right) \in \calh_{+}$
and $\pi=\left(c,B,\theta\right) \in \Pi\left(w,x\right) $.
Recall that $W^{w,x}(s;c,B, \theta)$ is the solution at time $s$ of
the first equation in (\ref{DYN_W_X_INFINITE_RETIREMENT_II})
starting at time $0$ in $\left(w,x\right)$
and following the strategy $\left(c,B,\theta \right)$; recall also that
$X^{x} (s)$ is the solution at time $s$ of the second equation in (\ref{DYN_W_X_INFINITE_RETIREMENT_II}), starting at time $0$ in $x$.
For readability we will use the shorthand notation
		\begin{align*}
			 W_{\pi}(s) &:=  W^{w,x}(s;c,B,\theta),\\
			 X(s ) &:=  X^{x} (s),
		\end{align*}
omitting the dependence on the controls and on the initial conditions.

Now we prove each step separately.

\medskip

{\bf Proof of (i).}
We apply Ito's formula of \cite[Proposition~1.165]{FABBRI_GOZZI_SWIECH_BOOK},
to the process
$\langle (g_{\infty},h_{\infty}),(X_0(t),X_1(t))\rangle{_{M_2}}$
getting
\begin{align}\label{eq:hinftyX1}
\begin{split}
d\langle (g_{\infty},h_{\infty}),& (X_0(t),X_1(t))\rangle_{M_2}
=
\\
=
&\langle A^*(g_{\infty},h_{\infty}), (X_0(t),X_1(t)) \rangle_{M_2} dt
+\langle (g_{\infty},h_{\infty}), \big(C(X_0(t),X_1(t))\big)_0^{\top} dZ_t \rangle_{M_2} \\[1mm]
=&  \big[  \mu_y  g_{\infty} X_0(t)+ h_{\infty}(0) X_0(t) +
\<-h_{\infty}^{\prime}+  g_{\infty} \phi,X_1(t)\>\big]  dt
+ g_{\infty}  X_0(t)  \sigma_y^\top dZ_t
\\[1mm]
=& \big[\big(\mu_y + \beta\big) g_{\infty} X_0(t) -X_0(t)+ (r+\delta)\langle h_{\infty},   X_1(t) \rangle
\big]  dt + g_{\infty}  X_0(t)  \sigma_y^\top dZ_t,
\end{split}
\end{align}
where the last equality follows from (\ref{EQ_g_h_infty}).
Therefore, writing
$$
\bar\Gamma_{\infty}(t)=\Gamma_{\infty} \big( W_\pi(t),X(t)  \big)
= W_\pi(t)+ g_{\infty}X_0(t) + \< h_{\infty}, X_1(t)\>{,} $$
we have, from (\ref{DYN_W_X_INFINITE_RETIREMENT_II}) and \eqref{eq:hinftyX1},
	\begin{align}\label{eq:bargammaito}
	\begin{split}
	d\bar\Gamma_{\infty} (t)
	=
	&\left[W(t) (r+\delta) + \theta^\top(t) (\mu-r\mathbf 1)  +  X_0(t) - c(t) - \delta B(t) \right.\\
	&+ \left.\big(\mu_y + \beta\big) g_{\infty} X_0(t) -X_0(t)+ (r+\delta)\langle h_{\infty},   X_1(t) \rangle
	\right] dt\\
	&+ \left[ \theta^\top(t)  \sigma  +  g_{\infty}    X_0(t)  \sigma_y^\top \right]dZ_t\\
	=
	&\left[(r+\delta)\bar\Gamma_{\infty} (t)    - c(t) - \delta B(t)
	+ \Big(  \theta^\top (t) \sigma +  g_{\infty}  X_0(t)  \sigma_y^\top \Big)\kappa\right] dt\\
	&+ \left[\theta^\top(t) \sigma +  g_{\infty}   X_0(t)  \sigma_y^\top \right]dZ_t,
	\end{split}
	\end{align}
	where we have used $\mu_y + \beta = r+\delta +  \sigma_y^\top\kappa$ (see \eqref{DEF_BETA}). This is the claim.

\medskip

{\bf Proof of (ii).}
By Girsanov's Theorem (see e.g. \cite[Theorem  3.5.1]{KARATZAS_SHREVE_91}, for any $T>0$, under the probability (depending on $T$, defined on $\calf_T$ and, there, equivalent to $\P$)
\begin{equation}\label{eq:Ptdef}
\tilde \P_T= \exp\left(-\kappa^\top Z(T)- \frac12\kappa^\top \!\kappa\, T\right)\P,
\end{equation}
the process $t \mapsto \tilde Z(t)= \kappa t + Z(t)$
is a $d$-dimensional Brownian motion on $[0,T]$ and $\bar\Gamma_\infty$
satisfies, on $[0,T]$,
$$
d\bar\Gamma_{\infty} (t)=
\left[(r+\delta)\bar\Gamma_{\infty} (t)    - c(t) - \delta B(t)\right] dt
+ \left[\theta^\top(t) \sigma +  g_{\infty}   X_0(t)  \sigma_y^\top \right]
d\tilde Z_t.
$$
Hence we obtain, under $\tilde\P_T$, for every
$0\le t\le T$,
\begin{equation}\label{eq:Gammatau}
\bar\Gamma_\infty(t)=e^{(r+\delta)t}
\left[\bar\Gamma_\infty(0)-\int_{0}^{t} e^{-(r+\delta)s}(c(s) + \delta B(s))ds
+  \int_{0}^{t} e^{-(r+\delta)s}\left[\theta^\top(s) \sigma +  g_{\infty}   X_0(s)  \sigma_y^\top \right]d\tilde Z_s\right].
\end{equation}
Setting $\Gamma_\infty^0(t):=e^{-(r+\delta)t}\bar\Gamma_\infty(t)$
the above \eqref{eq:Gammatau} is rewritten as
\begin{equation}\label{eq:Gamma0tau}
\Gamma^0_\infty(t)=
\Gamma^0_\infty(0)-\int_{0}^{t} e^{-(r+\delta)s}(c(s) + \delta B(s))ds
+  \int_{0}^{t} e^{-(r+\delta)s}\left[\theta^\top(s) \sigma +  g_{\infty}   X_0(s)  \sigma_y^\top \right]d\tilde Z_s{,}
\end{equation}
which implies that the process $\Gamma^0_\infty(t)$ is a supermartingale under $\tilde \P_T$ on $[0,T]$.
By the optional sampling theorem we then have, for every couple of stopping times
$0\le \tau_1\le \tau_2\le T$, and calling $\tilde\E_T$
the expectation under $\tilde\P_T$,
\begin{equation}\label{eq:OST}
\tilde \E_T \left[\Gamma^0_\infty(\tau_2)| \calf_{\tau_1}\right]
\le
\Gamma^0_\infty(\tau_1), \qquad \tilde \P_T-a.s.
\end{equation}
The admissibility of the strategy $\pi$, and the fact that $\P$ and $\tilde\P_T$ are equivalent on $\calf_T$, implies that
$\Gamma^0_\infty(\tau_2)\ge 0$, $\tilde \P_T$-a.s., hence also
$$
\tilde \E_T \left[\Gamma^0_\infty(\tau_2)| \calf_{\tau_1}\right]
\ge 0,\qquad \tilde \P_T-a.s.
$$
Now let $\tau_1:=\tau \wedge T$ where $\tau$ is the first exit time
of $(W_\pi(t),X(t))$ from $\calh_{++}$ (which is taken to be identically $0$
when $\Gamma_\infty(w,x)=0$). Hence $\Gamma^0_\infty(\tau_1)=0$ on $\{\tau<T\}$ and, from \eqref{eq:OST}, we get
$$
\1_{\{\tau<T\}}
\tilde \E_T \left[\Gamma^0_\infty(\tau_2)| \calf_{\tau_1}\right]
=
\tilde \E_T \left[\Gamma^0_\infty(\tau_2)\1_{\{\tau<T\}}| \calf_{\tau_1}\right]
=0, \qquad \tilde\P_T-a.s.
$$
and, consequently,
\begin{equation}\label{eq:GammaInd0}
\Gamma^0_\infty(\tau_2)\1_{\{\tau<T\}}=0, \qquad \tilde\P_T-a.s.
\end{equation}
We now use \eqref{eq:Gamma0tau} to compute
$\Gamma^0_\infty(\tau_2)-\Gamma^0_\infty(\tau_1)$ getting
\begin{equation}\label{eq:Gamma0taubis}
\Gamma^0_\infty(\tau_2)-\Gamma^0_\infty(\tau_1)=
-\int_{\tau_1}^{\tau_2} e^{-(r+\delta)s}(c(s) + \delta B(s))ds
+  \int_{\tau_1}^{\tau_2} e^{-(r+\delta)s}
\left[\theta^\top(s)\sigma+g_{\infty}X_0(s)\sigma_y^\top\right]d\tilde Z_s.
\end{equation}
Again using the optional sampling theorem we get
$$
\tilde \E_T \left[\Gamma^0_\infty(\tau_2)| \calf_{\tau_1}\right]
-\Gamma^0_\infty(\tau_1)=
-\tilde \E_T \left[\int_{\tau_1}^{\tau_2} e^{-(r+\delta)s}(c(s) + \delta B(s))ds
| \calf_{\tau_1}\right], \qquad \tilde\P_T-a.s.
$$
Hence, taking $\tau_2\equiv T$
$$
0\le  \1_{\{\tau<T\}}
\tilde\E_T \left[\Gamma^0_\infty(T)| \calf_{\tau_1}\right]
=
-\tilde \E_T \left[\int_{0}^{T}  \1_{\{\tau<s\}} e^{-(r+\delta)s}(c(s) + \delta B(s))ds
| \calf_{\tau_1}\right], \qquad \tilde\P_T-a.s.
$$
which implies
\begin{equation}\label{eq:cB0PT}
\1_{\{\tau<s\}}(\omega) c(s,\omega)= \1_{\{\tau<s\}}(\omega)B(s,\omega)=0,
\qquad ds \otimes \tilde\P_T-a.e.\; in \;[0,T]\times \Omega.
\end{equation}
%
We now multiply \eqref{eq:Gamma0taubis} by $\1_{\{\tau<T\}}$
and we use \eqref{eq:GammaInd0} and \eqref{eq:cB0PT} to get
\begin{equation}\label{eq:Gamma0tauter}
0
=
\int_{0}^{\tau_2} e^{-(r+\delta)s}\1_{\{\tau<s\}}
\left[\theta^\top(s)\sigma+g_{\infty}X_0(s)\sigma_y^\top\right]d\tilde Z_s, \qquad \tilde\P_T-a.s.
\end{equation}
Since the integral of the right hand side is a martingale the above implies
that
\begin{equation}\label{theta0PT}
\1_{\{\tau<s\}} \theta^\top(s) \sigma +g_{\infty}X_0(s)\sigma_y^\top=0,
\qquad ds \otimes \tilde\P_T-a.e.\; in \;[0,T]\times \Omega.
\end{equation}
Using \eqref{eq:GammaInd0}-\eqref{eq:cB0PT}-\eqref{theta0PT}, that $\P$ and $\tilde \P_T$ are equivalent on $\calf_T$, and the arbitrariness of $T$, we get the claim.
%
\end{proof}

\subsection{The fundamental identity when $\gamma \in (0,1)$}
\label{SSE:FUNDIDGAMMA01}

We start with the following lemma.

\begin{lemma}\label{LEMMA_LIMIT_AT_INFTY_GUESS_VALUE_FUNCTION_INFINITE_RETIREMENT}
Let $\gamma \in (0,1)$.
For any initial condition $(w,x)\in \calh_{++}$ and for any strategy $\left(c,B, \theta\right) \in \Pi(w,x)$,
let $\tau$ be the first exit time of $\left(W_\pi(\cdot),X(\cdot)\right)$ from $\calh_{++}$. Then we have, for every $T>0$,
\begin{equation}\label{EQ_DECREASING_PROD_INF_RETnew}
\mathbb E \left[
e^{-(1-\gamma)\big(r+\delta +\frac{\kappa^\top \kappa}{2\gamma}\big)(T\wedge \tau)} 
\bar v\big(W_\pi((T\wedge \tau)), X((T\wedge \tau))\big)\right] \leq
\bar v(x,w).
\end{equation}
Moreover,\footnote{\color{black}This is a sort of transversality condition, along the lines of what is usually done in the context of the maximum principle in infinite horizon problems.}
\begin{equation}\label{EQ_DECREASING_PROD_INF_RETlim}
\lim_{T\to +\infty}
\mathbb E \left[
e^{-(\rho+\delta)(T\wedge \tau)}
\bar v\big(W_\pi((T\wedge \tau)), X((T\wedge \tau))\big)\right]
=0{.}
\end{equation}
\end{lemma}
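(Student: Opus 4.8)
The plan is to use the explicit form of $\bar v$ and the stochastic differential equation \eqref{eq:GammaProcessSDE} for $\bar\Gamma_\infty$, combined with It\^o's formula, to obtain a supermartingale-type estimate. Recall that $\bar v(w,x)=\frac{f_\infty^\gamma}{1-\gamma}\Gamma_\infty^{1-\gamma}(w,x)$, so the quantity of interest is, up to the constant $\frac{f_\infty^\gamma}{1-\gamma}>0$, of the form $e^{-\lambda(T\wedge\tau)}\bar\Gamma_\infty(T\wedge\tau)^{1-\gamma}$, where $\lambda:=(1-\gamma)(r+\delta+\tfrac{\kappa^\top\kappa}{2\gamma})$. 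First I would apply It\^o's formula to the process $t\mapsto e^{-\lambda t}\bar\Gamma_\infty(t)^{1-\gamma}$ on the set $\{t<\tau\}$ (where $\bar\Gamma_\infty>0$, so the power function is smooth), using the dynamics \eqref{eq:GammaProcessSDE}. Writing $v(t):=\theta^\top(t)\sigma+g_\infty X_0(t)\sigma_y^\top$ for the diffusion coefficient of $\bar\Gamma_\infty$, the drift term that emerges after collecting terms is, schematically,
\begin{equation*}
e^{-\lambda t}\bar\Gamma_\infty^{-1-\gamma}\Big[(1-\gamma)\bar\Gamma_\infty^{2}\big((r+\delta)+\tfrac{v\kappa}{\bar\Gamma_\infty}-\tfrac{\lambda}{1-\gamma}\big)-(1-\gamma)\bar\Gamma_\infty(c+\delta B)-\tfrac{\gamma(1-\gamma)}{2}|v|^2\Big].
\end{equation*}
The key algebraic observation is that, after inserting $\lambda/(1-\gamma)=r+\delta+\kappa^\top\kappa/(2\gamma)$, the terms not involving the controls combine into $-(1-\gamma)\big(\tfrac{\kappa^\top\kappa}{2\gamma}\bar\Gamma_\infty^2-v\kappa\,\bar\Gamma_\infty+\tfrac{\gamma}{2}|v|^2\big)$, and since $\gamma\in(0,1)$ this is a (negative of a) perfect-square-type quadratic form in $(v,\bar\Gamma_\infty)$, hence $\le 0$; adding the manifestly nonpositive term $-(1-\gamma)\bar\Gamma_\infty(c+\delta B)$ keeps the whole drift $\le 0$. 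Therefore $e^{-\lambda(t\wedge\tau)}\bar\Gamma_\infty(t\wedge\tau)^{1-\gamma}$ is a local supermartingale.

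Next I would promote this to a genuine supermartingale inequality. Since $\gamma\in(0,1)$, the integrand $\bar\Gamma_\infty^{1-\gamma}\ge 0$, so the stopped process is a nonnegative local supermartingale, hence a true supermartingale by Fatou's lemma (localize along a reducing sequence of stopping times $\tau_n\uparrow\tau$ or $\infty$, use the local-supermartingale property up to $\tau_n\wedge T\wedge\tau$, then let $n\to\infty$ invoking Fatou on the left and monotone/dominated arguments on the right). Taking expectations at time $T\wedge\tau$ and comparing with time $0$ yields
\begin{equation*}
\mathbb E\big[e^{-\lambda(T\wedge\tau)}\bar\Gamma_\infty(T\wedge\tau)^{1-\gamma}\big]\le \bar\Gamma_\infty(0)^{1-\gamma}=\Gamma_\infty(w,x)^{1-\gamma},
\end{equation*}
which, after multiplying by $f_\infty^\gamma/(1-\gamma)>0$, is precisely \eqref{EQ_DECREASING_PROD_INF_RETnew}. (One must also note $\bar v$ extended to $\partial\calh_+$ by $0$ is consistent, so the inequality holds even on the event $\{\tau\le T\}$ where $\bar\Gamma_\infty(\tau)=0$.)

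For the limit \eqref{EQ_DECREASING_PROD_INF_RETlim}, I would rewrite $e^{-(\rho+\delta)(T\wedge\tau)}\bar v(\cdots)=e^{-(\rho+\delta-\lambda)(T\wedge\tau)}\cdot e^{-\lambda(T\wedge\tau)}\bar v(\cdots)$. By Hypothesis~\ref{HYP_BETA-BETA_INFTY}-(ii), $\rho+\delta-\lambda=\rho+\delta-(1-\gamma)(r+\delta+\tfrac{\kappa^\top\kappa}{2\gamma})>0$; moreover $\bar v\ge 0$ since $\gamma\in(0,1)$, and from \eqref{EQ_DECREASING_PROD_INF_RETnew} the family $\{e^{-\lambda(T\wedge\tau)}\bar v(W_\pi(T\wedge\tau),X(T\wedge\tau))\}_{T>0}$ is bounded in $L^1$ by $\bar v(w,x)$. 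Hence
\begin{equation*}
0\le \mathbb E\big[e^{-(\rho+\delta)(T\wedge\tau)}\bar v(\cdots)\big]\le e^{-(\rho+\delta-\lambda)T}\,\mathbb E\big[e^{-\lambda(T\wedge\tau)}\bar v(\cdots)\big]\le e^{-(\rho+\delta-\lambda)T}\bar v(w,x)\xrightarrow[T\to\infty]{}0,
\end{equation*}
where the middle inequality uses $T\wedge\tau\le T$ and $\rho+\delta-\lambda>0$. This gives \eqref{EQ_DECREASING_PROD_INF_RETlim}.

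I expect the main obstacle to be the rigorous justification of the It\^o computation and the passage from local to true supermartingale: $\bar\Gamma_\infty$ may hit $0$ (at $\tau$), so the power $\bar\Gamma_\infty^{1-\gamma}$ is only H\"older-continuous there and one must be careful to apply It\^o's formula only on $[0,\tau)$ and handle the boundary via the extension $\bar v\equiv 0$ on $\partial\calh_+$ together with Lemma~\ref{lm:Gammabar}-(ii), which guarantees $\bar\Gamma_\infty$ stays at $0$ after $\tau$ with the controls vanishing — so no spurious contribution arises past $\tau$. A secondary technical point is checking the local-martingale part $\int e^{-\lambda s}(1-\gamma)\bar\Gamma_\infty^{-\gamma}v\,dZ_s$ is genuinely a local martingale (not just formally), which follows by the standard localization argument and does not require any integrability of $\theta$ beyond admissibility, precisely because we only need the supermartingale inequality, not an equality.
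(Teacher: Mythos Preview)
Your approach is essentially the same as the paper's, and the first part (the supermartingale estimate \eqref{EQ_DECREASING_PROD_INF_RETnew}) is correct: the It\^o computation, the perfect-square identification of the drift, and the passage from local to true supermartingale via Fatou (using nonnegativity when $\gamma\in(0,1)$) all match the paper's argument.

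However, there is a genuine gap in your proof of the limit \eqref{EQ_DECREASING_PROD_INF_RETlim}. You claim
\[
\mathbb E\big[e^{-(\rho+\delta)(T\wedge\tau)}\bar v(\cdots)\big]\le e^{-(\rho+\delta-\lambda)T}\,\mathbb E\big[e^{-\lambda(T\wedge\tau)}\bar v(\cdots)\big],
\]
justifying the middle inequality by ``$T\wedge\tau\le T$ and $\rho+\delta-\lambda>0$''. But this gives the \emph{wrong} direction: if $\rho+\delta-\lambda>0$ and $T\wedge\tau\le T$, then $e^{-(\rho+\delta-\lambda)(T\wedge\tau)}\ge e^{-(\rho+\delta-\lambda)T}$, not $\le$. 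So as written the bound fails on the event $\{\tau<T\}$.

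The fix is exactly what the paper does: split the expectation over $\{\tau\le T\}$ and $\{\tau>T\}$. On $\{\tau\le T\}$ one has $\bar\Gamma_\infty(\tau)=0$ by continuity, hence $\bar v(W_\pi(\tau),X(\tau))=0$ (the point you already noted for the first part but did not invoke here), so this piece vanishes identically. On $\{\tau>T\}$ one has $T\wedge\tau=T$ exactly, and then your chain of inequalities goes through with equality in the exponent, giving
\[
\mathbb E\big[\mathbf 1_{\{\tau>T\}}e^{-(\rho+\delta)T}\bar v(\cdots)\big]
= e^{-(\rho+\delta-\lambda)T}\,\mathbb E\big[\mathbf 1_{\{\tau>T\}}e^{-\lambda(T\wedge\tau)}\bar v(\cdots)\big]
\le e^{-(\rho+\delta-\lambda)T}\bar v(w,x)\to 0.
\]
Once you make this split explicit, your argument is complete and coincides with the paper's.
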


\begin{proof}
Fix $\left(w,x\right) \in \calh_{+}$
and $\pi=\left(c,B,\theta\right) \in \Pi\left(w,x\right) $.
As in the previous proof, for readability, we will use the shorthand notation
		\begin{align*}
			 W_{\pi}(s) &:=  W^{w,x}(s;c,B,\theta),\\
			 X(s ) &:=  X^{x} (s),
		\end{align*}
omitting the dependence on the controls and on the initial conditions.


Let then $\Gamma_\infty(w,x)>0$ and set $\tau$ be the first exit time of $\left(W_\pi(\cdot),X(\cdot)\right)$ from $\calh_{++}$.
We apply It\^o's formula to the process
$$
e^{-(1-\gamma)\big((r+\delta) + \frac{\kappa^\top \kappa}{ 2 \gamma }\big) t}
\bar v\big(W_\pi(t), X(t)  \big)
$$
up to time $\tau$.
Since by definition
\begin{equation}
	\bar v (w,x)=f_{\infty}^{\gamma} \frac{\Gamma_{\infty}^{1-\gamma}(w,x)}{1-\gamma},
	\end{equation}
then, using, as in the previous proof, the shorthand notation
$$
\bar\Gamma_{\infty}(t):=
\Gamma_{\infty} \big( W_\pi(t),X(t)  \big),
$$
we have
$$
e^{-(1-\gamma)\big((r+\delta) + \frac{\kappa^\top \kappa}{ 2 \gamma }\big) t}
\bar v\big(W_\pi(t), X(t)  \big)
=
e^{-(1-\gamma)\big((r+\delta) + \frac{\kappa^\top \kappa}{ 2 \gamma }\big) t} f_{\infty}^{\gamma}\frac{\bar\Gamma_{\infty}^{1-\gamma} (t)}{1-\gamma}.
$$
We first apply the finite-dimensional It\^{o}'s formula (up to time $\tau$)
to the process
$\frac{\bar\Gamma_{\infty}^{1-\gamma} (t)}{1-\gamma}$.
Since the function $h(z)=\frac{z^{1-\gamma}}{1-\gamma}$
belongs to $C^2((0,+\infty),\R)$ and not to $C^2(\R,\R)$, we use the Remark 2 to Theorem IV.3.3 of \cite{RevuzYor99} which takes account of this case.
We get, up to time $\tau$,
	\begin{align}
	\begin{split}
	d\frac{\bar\Gamma_{\infty}^{1-\gamma} (t)}{1-\gamma}	 =
	 &\bar\Gamma_{\infty}^{-\gamma}(t)\left[(r+\delta)\bar\Gamma_{\infty}(t)    - c(t) - \delta B(t)
	+\Big(\theta^\top (t) \sigma +  g_{\infty}  X_0(t)  \sigma_y^\top \Big)\kappa\right] dt\\[1mm]
	&-\frac{1}{2}\gamma \bar\Gamma_{\infty}^{-\gamma-1}
	\left[\theta^\top(t) \sigma +  g_{\infty}  X_0(t)  \sigma_y^\top  \right]^\top
	\left[\theta^\top(t) \sigma +  g_{\infty}  X_0(t)\sigma_y^\top  \right]dt\\[1mm]
	&+ \bar\Gamma_{\infty}^{-\gamma}(t)\left[\theta^\top(t) \sigma +  g_{\infty}  X_0(t)   \sigma_y^\top \right]dZ_t.
	\end{split}
	\end{align}
Hence, up to time $\tau$,
\begin{align}\label{EQ_DYN_EXP_GAMMA}
	\begin{split}
&d\left[
e^{-(1-\gamma)\left(r+\delta + \frac{\kappa^\top \kappa}{ 2 \gamma }\right)t}
f_{\infty}^{\gamma}\frac{\bar\Gamma^{1-\gamma}_\infty(t)}{1-\gamma}
\right]
\\[2mm]
=&e^{-(1-\gamma)\left(r+\delta + \frac{\kappa^\top \kappa}{ 2 \gamma }\right) t} f_{\infty}^{\gamma}
	\bigg\{-\left(r+\delta + \frac{\kappa^\top \kappa}{ 2 \gamma }\right)
	\bar\Gamma_{\infty}^{1-\gamma} (t)\\
	&+ \bar\Gamma_{\infty}^{-\gamma}(t)\left[(r+\delta)
\bar\Gamma_{\infty}(t)- \big(c(t)+ \delta B(t) \big)
	+ \Big(  \theta^\top (t) \sigma +  g_{\infty}  X_0(t)  \sigma_y^\top \Big)\kappa\right] \\
	& -\frac{1}{2}\gamma \bar\Gamma_{\infty}^{-\gamma-1}{(t)}
	\left[\theta^\top(t) \sigma +  g_{\infty}   X_0(t) \sigma_y^\top \right]^\top
	\left[\theta^\top(t) \sigma +  g_{\infty}   X_0(t) \sigma_y^\top \right] \bigg\}dt\\
&+ e^{-(1-\gamma)\left(r+\delta + \frac{\kappa^\top \kappa}{ 2 \gamma }\right) t} f_{\infty}^{\gamma} \bar\Gamma_{\infty}^{-\gamma}(t) \left[\theta^\top(t) \sigma +  g_{\infty}  X_0(t)  \sigma_y^\top  \right]dZ_t\\[2mm]
	\end{split}
	\end{align}
\begin{align}\label{EQ_DYN_EXP_GAMMAbis}
	\begin{split}
	=&
	e^{-(1-\gamma)\left(r+\delta + \frac{\kappa^\top \kappa}{ 2 \gamma }\right) t}f_{\infty}^{\gamma}
	\Big\{ - \bar\Gamma_{\infty}^{-\gamma}(t) \big(c(t) + \delta B(t))  \\
	&- \frac{1}{2 \gamma }\bar\Gamma_{\infty}^{-\gamma-1}(t)
	\left[ \bar\Gamma_{\infty}(t)  \kappa-  \gamma  \Big(\theta^\top(t) \sigma +  g_{\infty} X_0(t)   \sigma_y^\top  \Big)       \right]^\top
	\left[ \bar\Gamma_{\infty}(t)  \kappa-  \gamma  \Big(\theta^\top(t) \sigma +  g_{\infty} X_0(t)  \sigma_y^\top   \Big)       \right]
	\Big\} dt\\
	&+ e^{-(1-\gamma)\big((r+\delta) + \frac{\kappa^\top \kappa}{ 2 \gamma }\big) t} f_{\infty}^{\gamma}\bar\Gamma_{\infty}^{-\gamma}(t) \left[\theta^\top(t) \sigma +  g_{\infty}  X_0(t)  \sigma_y^\top  \right]dZ_t.
\end{split}
	\end{align}
The strategy $(c,B,\theta) \in \Pi(w,x)$,
thus the drift in (\ref{EQ_DYN_EXP_GAMMAbis}) is negative.
It follows that, up to time $\tau$, the process
\begin{equation}\label{eq:supermart}
e^{-(1-\gamma)\big((r+\delta) +
\frac{\kappa^\top \kappa}{ 2 \gamma }\big) t} f_{\infty}^{\gamma}
\frac{\bar\Gamma_{\infty}^{1-\gamma} (t)}{1-\gamma}
\end{equation}
is a local $\mathbb F$-supermartingale.
Moreover, calling
$$
\tau_N=\inf\left\{t\ge 0: \bar\Gamma_\infty(t)\le \frac1N\right\}{,}
$$		
for $N$ sufficiently big we have $\tau_N>0$ and
both the drift and the diffusion coefficient above are integrable.
Hence $\bar \Gamma_\infty(t)$ is an integrable supermartingale up to $\tau_N$.
We then have, for every $T>0$,
\begin{eqnarray}\label{eq:supermartprimadiFatou}
\mathbb E \left[
e^{-(1-\gamma)\big(r+\delta +\frac{\kappa^\top \kappa}{2\gamma}\big)(T\wedge \tau_N)} f_{\infty}^{\gamma}
\frac{\bar\Gamma_{\infty}^{1-\gamma} (T\wedge \tau_N)}{1-\gamma}\right] \leq
f_{\infty}^{\gamma}
		\mathbb E \left(\frac{\bar\Gamma_{\infty}^{1-\gamma} (0)}{1-\gamma}\right).
		\end{eqnarray}
Since $\tau_N \nearrow \tau$, sending $N$ to $+\infty$ we get, using the definition of $\bar v$, the a.s. convergence:
$$
e^{-(1-\gamma)\big(r+\delta +\frac{\kappa^\top \kappa}{2\gamma}\big)(T\wedge \tau_N)} f_{\infty}^{\gamma}
\frac{\bar\Gamma_{\infty}^{1-\gamma} (T\wedge \tau_N)}{1-\gamma}
\to
e^{-(1-\gamma)\big(r+\delta +\frac{\kappa^\top \kappa}{2\gamma}\big)(T\wedge \tau)} \bar v (W_\pi(T\wedge \tau),X(T\wedge \tau)){.}
$$
Since $\gamma \in (0,1)$, the integrand in the left hand side of \eqref{eq:supermartprimadiFatou} is always nonegative, hence we can apply
Fatou's Lemma to get the claim (\ref{EQ_DECREASING_PROD_INF_RETnew}).

\bigskip

Now we prove the claims \eqref{EQ_DECREASING_PROD_INF_RETlim}. First, observe that, for every $T>0$,
\begin{eqnarray}
&\mathbb E \left[
e^{-(\rho+\delta)(T\wedge \tau)}
\bar v\big(W_\pi(T\wedge \tau), X(T\wedge \tau)\big)\right]
=
\nonumber
\\
&\mathbb E \left[
\mathbf{1}_{\{\tau\le T\}}e^{-(\rho+\delta)\tau}
\bar v\big(W_\pi(\tau), X(\tau)\big)\right]
+
e^{-(\rho+\delta)T}
\mathbb E \left[\mathbf{1}_{\{\tau> T\}}
\bar v\big(W_\pi(T), X(T)\big)\right]=:I_1+I_2{.}
\label{eq:decomposeindicator}
\end{eqnarray}
Second, the term $I_1$ in \eqref{eq:decomposeindicator}
is zero since, by definition of $\tau$ and continuity of $\bar \Gamma$ we have $\bar\Gamma(\tau)=0$.

Third, observe that the term $I_2$ converges to $0$. Indeed
$$
I_2=
e^{-\left[\rho+\delta -(1-\gamma)\big(r+\delta +\frac{\kappa^\top \kappa}{2\gamma}\big)
\right]T}
\mathbb E \left[\mathbf{1}_{\{\tau> T\}}
e^{-(1-\gamma)\big(r+\delta +\frac{\kappa^\top \kappa}{2\gamma}\big)T}
\bar v\big(W_\pi(T), X(T)\big)\right]
$$
$$
\le
e^{-\left[\rho+\delta -(1-\gamma)\big(r+\delta +\frac{\kappa^\top \kappa}{2\gamma}\big)\right]T}
\bar v\big(w,x\big){,}
$$
where in the last {inequality} we used (\ref{EQ_DECREASING_PROD_INF_RETnew}).
By Hypothesis \ref{HYP_BETA-BETA_INFTY}-(ii) we immediately get that
$I_2 \to 0$ as $T\to +\infty$.
The claim \eqref{EQ_DECREASING_PROD_INF_RETlim} immediately follows.
%
%
%
\end{proof}

\begin{proposition}\label{pr:fundid}
Let $\gamma \in (0,1)$. Take any initial condition $(w,x)\in \calh_{++}$ and take any admissible strategy
$\pi=\left(c,B, \theta\right) \in \Pi(w,x)$.
Let $\tau$ be the first exit time of $\left(W_\pi(\cdot),X(\cdot)\right)$ from $\calh_{++}$. Then we have, for every $T>0$,
\begin{multline}\label{EXP_VER_INF_RET_III}
 \begin{split}
&\bar v\big(w,x\big)=J(w,x;\pi)
\\[2mm]
+&\mathbb E  \int_0^{\tau} e^{-(\rho + \delta) s }\left\{
\mathbb H_{max}\big(
W_{\pi}(s),X(s),D\bar v(W_{\pi}(s),X(s)),D^2 \bar v(W_{\pi}(s),X(s))
\big)\right.
\\[2mm]
&\qquad\qquad\left.- \mathbb H_{cv}\big(
W_{\pi}(s),X(s),D\bar v(W_{\pi}(s),X(s)),D^2\bar v(W_{\pi}(s),X(s));\pi(s)
\big)
\right\} ds {.}
\end{split}
\end{multline}
\end{proposition}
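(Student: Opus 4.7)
The plan is to apply the infinite-dimensional It\^o formula of \cite[Proposition~1.165]{FABBRI_GOZZI_SWIECH_BOOK} to the process $e^{-(\rho+\delta)t}\bar v(W_\pi(t),X(t))$ between time $0$ and the localizing times $T\wedge \tau_N$, where $\tau_N:=\inf\{t\ge 0:\; \bar\Gamma_\infty(t)\le 1/N\}\nearrow \tau$ as in the proof of Lemma~\ref{LEMMA_LIMIT_AT_INFTY_GUESS_VALUE_FUNCTION_INFINITE_RETIREMENT}. The stopping at $\tau_N$ keeps the trajectory strictly inside $\calh_{++}$, where $\bar v$ is $C^2$ in $(w,x_0)$ and $\bar v_x\in \cald(A^*)$ thanks to Lemma~\ref{LEMMA_DIFF_EQ_G_INFTY_H_INFTY}, so that the hypotheses of It\^o's formula are satisfied and the infinitesimal generator $\call^{\pi(s)}$ of $(W,X)$ under the control $\pi(s)$ may be written in the form $\langle \calb,D\bar v\rangle_{\calh}+\tfrac12\mathrm{Tr}(\Sigma\Sigma^* D^2_{wx_0}\bar v)$, with $\langle A X, \bar v_x\rangle_{M_2}=\langle X, A^* \bar v_x\rangle_{M_2}$ (cf.\ Remark~\ref{REMDEF_SUPERSOLUTION_INF_RET}).

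Comparing with the definitions of $\mathbb H_1$, $\mathbb H_{cv}$ and $U(c,B):=\frac{c^{1-\gamma}}{1-\gamma}+\delta\frac{(kB)^{1-\gamma}}{1-\gamma}$, a direct bookkeeping yields
\[
\call^{\pi(s)}\bar v \;=\; \mathbb H_1(\cdot,D\bar v, \bar v_{x_0x_0}) + \mathbb H_{cv}(\cdot,D\bar v,D^2_{wx_0}\bar v;\pi(s)) - U(c(s),B(s)).
\]
Since $\bar v$ is a classical solution of the HJB equation (Proposition~\ref{PROP_COMPARISON_FINITENESS_VAL_FUN_INF_RET}), $(\rho+\delta)\bar v=\mathbb H_1+\mathbb H_{max}$, so the drift of $e^{-(\rho+\delta)t}\bar v$ reduces to $e^{-(\rho+\delta)t}[\mathbb H_{cv}(\cdot;\pi(s))-U(c(s),B(s))-\mathbb H_{max}]$. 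On $[0,T\wedge\tau_N]$ the state $(W,X)$ stays in a set on which $\bar v_w=f_\infty^\gamma\bar\Gamma_\infty^{-\gamma}$ is bounded (by $f_\infty^\gamma N^\gamma$), hence the stochastic integral is a genuine martingale; taking expectations and rearranging gives
\begin{align*}
\bar v(w,x)=\;&\mathbb E\!\left[e^{-(\rho+\delta)(T\wedge\tau_N)}\bar v(W_\pi(T\wedge\tau_N),X(T\wedge\tau_N))\right]\\
&+\mathbb E\int_0^{T\wedge\tau_N}\!\!\!e^{-(\rho+\delta)s}U(c(s),B(s))\,ds +\mathbb E\int_0^{T\wedge\tau_N}\!\!\!e^{-(\rho+\delta)s}[\mathbb H_{max}-\mathbb H_{cv}(\cdot;\pi(s))]\,ds.
\end{align*}

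I would then pass to the limit $N\to\infty$ and $T\to\infty$ in this order. Since $\gamma\in(0,1)$, both $U(c,B)\ge 0$ and $\mathbb H_{max}-\mathbb H_{cv}(\cdot;\pi)\ge 0$ (the latter by definition of the supremum), so monotone convergence drives the two integrals to the corresponding ones on $[0,\tau]$. The boundary term goes to $0$: after replacing $\tau_N$ by $\tau$ using Fatou plus continuity of $\bar v$ on $\calh_+$ (recall that $\bar v=0$ on $\partial\calh_+$ when $\gamma\in(0,1)$, cf.\ Remark~\ref{rm:vonboundary}), the transversality relation \eqref{EQ_DECREASING_PROD_INF_RETlim} of Lemma~\ref{LEMMA_LIMIT_AT_INFTY_GUESS_VALUE_FUNCTION_INFINITE_RETIREMENT} finishes the job. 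To match $J(w,x;\pi)=\mathbb E\int_0^\infty e^{-(\rho+\delta)s}U(c(s),B(s))\,ds$ with the integral $\int_0^\tau$ obtained in the limit, I would invoke Lemma~\ref{lm:Gammabar}-(ii): after $\tau$ the admissibility constraint forces $c\equiv B\equiv 0$, so that $U\equiv 0$ on $[\tau,\infty)$ and the two integrals coincide.

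The principal technical obstacle is the joint limit $N\to\infty$, $T\to\infty$: the It\^o identity is only valid on sets where $\bar v$ is smooth, i.e.\ away from the boundary where $\bar\Gamma_\infty$ vanishes and $\bar v_w$, $\bar v_{ww}$ blow up. One must simultaneously control the boundary term at $T\wedge\tau_N$ (handled by combining Fatou with \eqref{EQ_DECREASING_PROD_INF_RETlim}) and justify the interchange of limit and expectation inside the two running integrals (handled by non-negativity of the integrands and monotone convergence, using that $\tau_N\nearrow\tau$ and that, on $[\tau,\infty)$, Lemma~\ref{lm:Gammabar}-(ii) ensures the Hamiltonian-gap integrand effectively disappears from the accounting).
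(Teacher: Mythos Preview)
Your proposal is correct and follows essentially the same route as the paper: localize via $\tau_N$, apply the infinite-dimensional It\^o formula to $e^{-(\rho+\delta)s}\bar v(W_\pi(s),X(s))$, use the HJB equation to rewrite the drift as $-(\mathbb H_{max}-\mathbb H_{cv})-U$, take expectations, then send $N\to\infty$ and $T\to\infty$ by monotone convergence on the two nonnegative running integrals and \eqref{EQ_DECREASING_PROD_INF_RETlim} for the boundary term, finally invoking Lemma~\ref{lm:Gammabar}-(ii) to identify the $\int_0^\tau U$ term with $J(w,x;\pi)$. The only nuance the paper flags that you glide over is that the cited It\^o formula is stated for functions defined on the whole space, whereas $\bar v$ is only smooth on $\calh_{++}$; the paper remarks that Proposition~1.164 of \cite{FABBRI_GOZZI_SWIECH_BOOK} extends to this setting because on $[0,\tau_N]$ the process stays in $\{\bar\Gamma_\infty\ge 1/N\}$ where all required regularity holds.
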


\begin{proof}
Calling
$$
\tau_N=\inf\left\{t\ge 0: \bar\Gamma_\infty(t)\le \frac1N\right\}
$$		
for $N$ sufficiently big we have $\tau_N>0$. Let us take such $N$ from now on.
We want to apply It\^{o}'s formula
to the process
$$
e^{-(\rho + \delta) s}\bar v\big(W_{\pi}(s), X(s)\big)
$$
varying $s$ between $0$ and $T\wedge \tau_N$.
We were not able to find, in the current literature, an It\^o's formula which can be applied to this case. Indeed all results on It\^o's formula
in infinite dimension concern the case of functions defined over the whole space, which is not the case here.
However it is not difficult to check that the proof of Proposition 1.164 in \cite{FABBRI_GOZZI_SWIECH_BOOK} can be easily generalized to this case as our process, on $[0,\tau_N]$, lives in the set
$\left\{t\ge 0: \bar\Gamma_\infty(t)\ge \frac1N\right\}$
and, on the same set the function $\bar v$
satisfies all required assumptions.\footnote{To avoid using such a generalization of It\^o's formula it is possible to argue differently: one can use first the equation \eqref{eq:GammaProcessSDE} given in Lemma \ref{lm:Gammabar}, for the process $\bar\Gamma$ and then apply the one-dimensional It\^o's formula (see e.g. Remark 2 to Theorem IV.3.3 of \cite{RevuzYor99}) to the process
$e^{-(\rho + \delta) s}f_\infty\bar\Gamma(s)^{1-\gamma}$. Using then the HJB equation one would get the same result. We prefer not to use this path as the procedure would be less clear and intuitive for the reader.}
Hence we apply Proposition 1.164 in \cite{FABBRI_GOZZI_SWIECH_BOOK} obtaining
\begin{multline}\label{EXP_VER_INF_RETnew}
\begin{split}
&e^{-(\rho + \delta) (T\wedge\tau_N) }
\bar v\big(W_{\pi}(T\wedge\tau_N), X(T\wedge\tau_N)\big)
- \bar v\big(w,x\big)=
\\
=& \int_0^{T\wedge\tau_N} e^{-(\rho + \delta) s }\left\{   -(\rho + \delta)   \bar v\big(W_{\pi}(s), X(s)\big)  \right.
\\
&+\bar v_w \big(W_{\pi}(s), X(s)\big)
\big[ (r+ \delta) W_{\pi}(s)+ \theta^\top (s) (\mu-r\mathbf 1)
+  X_0(s) - c(s) - \delta B(s) \big]
\\
&+ \langle AX(s),\bar v_x \big(W_{\pi}(s), X(s)\big)  \rangle_{M_2}
+\frac{1}{2}\bar v_{ww} \big(W_{\pi}(s), X(s)\big)  \theta^\top \sigma \sigma^\top \theta
\\
&+\frac{1}{2} \bar v_{x_0 x_0} \big(W_{\pi}(s), X(s)\big)
\sigma_y^\top \sigma_y X_0^2 (s)
+ \bar v_{w x_0} \big(W_{\pi}(s), X(s)\big)
\theta^\top(s) \sigma\sigma_y X_0(s)\left.\right\} ds
\\
&+ \int_0^{T\wedge\tau_N} e^{-(\rho + \delta) s }\left\{
\bar v_{w} \big(W_{\pi}(s), X(s)\big)  \theta^\top(s)
\sigma + \bar v_{x_0} \big(W_{\pi}(s), X(s)\big) X_0 (s)  \sigma_y^\top
\right\}dZ_s.
\end{split}\end{multline}
By Proposition \ref{PROP_COMPARISON_FINITENESS_VAL_FUN_INF_RET},
the function $\bar v $ solves the HJB equation \eqref{eq:HJB1}
thus, for any $s\in [0,T\wedge \tau_N]$ we get
$$   (\rho+\delta )  \bar v(W_{\pi}(s), X(s))
= \mathbb H\big(W_{\pi}(s),X(s),
 D\bar v(W_{\pi}(s), X(s)),D^2 \bar v(W_{\pi}(s), X(s))\big).$$
By this last observation, and recalling (\ref{DEF_H_1_INF_RET}) and (\ref{DEF_H_MAX_INF_RET}),
the equality in (\ref{EXP_VER_INF_RETnew}) can be rewritten as
 \begin{eqnarray}\label{EXP_VER_INF_RET_II}
 \begin{split}
&e^{-(\rho + \delta) (T\wedge\tau_N) }
\bar v\big(W_{\pi}(T\wedge\tau_N), X(T\wedge\tau_N)\big)
- \bar v\big(w,x\big)=
\\[2mm]
=& - \int_0^{T\wedge\tau_N} e^{-(\rho + \delta) s }\left\{
\mathbb H_{max}\big(
W_{\pi}(s),X(s),D\bar v(W_{\pi}(s),X(s)),D^2 \bar v(W_{\pi}(s),X(s))
\big)\right.
\\[2mm]
&\qquad\qquad\left.- \mathbb H_{cv}\big(
W_{\pi}(s),X(s),D\bar v(W_{\pi}(s),X(s)),D^2\bar v(W_{\pi}(s),X(s));\pi(s)
\big)
\right\} ds
\\[2mm]
& -  \int_0^{T\wedge\tau_N}  e^{-(\rho + \delta) s }   \left( \frac{ c(s)^{1-\gamma}}{1-\gamma}
+ \delta \frac{\big(k B(s)\big)^{1-\gamma}}{1-\gamma}\right)    ds
\\[2mm]
&+ \int_0^{T\wedge\tau_N} e^{-(\rho + \delta) s }\left\{\bar v_{w} \big(W_{\pi}(s), X(s)\big) \theta^\top(s) \sigma
 + \bar v_{x_0} \big(W_{\pi}(s), X(s)\big)X_0 (s)  \sigma_y^\top    \right\}                dZ_s.
\end{split}\end{eqnarray}
By definition of $\mathbb H_{max}$ and $\mathbb H_{cv}$, we have
\begin{eqnarray}\label{POSITIVINESSfundid}
\mathbb H_{max}- \mathbb H_{cv} \geq 0.
\end{eqnarray}
Moreover the integral with respect to the Brownian motion in (\ref{EXP_VER_INF_RET_II}) is a martingale.
Taking the expectation we then get
   \begin{multline}\label{EXP_VER_INF_RET_IIImore}
 \begin{split}
&\mathbb E\left[e^{-(\rho + \delta) (T \wedge \tau_N) }
 \bar v\big(W_{\pi}(T \wedge \tau_N), X(T \wedge \tau_N)\big) \right]
- \bar v\big(w,x\big)=
\\[2mm]
=&-\mathbb E  \int_0^{T\wedge\tau_N} e^{-(\rho + \delta) s }\left\{
\mathbb H_{max}\big(
W_{\pi}(s),X(s),D\bar v(W_{\pi}(s),X(s)),D^2 \bar v(W_{\pi}(s),X(s))
\big)\right.
\\[2mm]
&\qquad\qquad\left.- \mathbb H_{cv}\big(
W_{\pi}(s),X(s),D\bar v(W_{\pi}(s),X(s)),D^2\bar v(W_{\pi}(s),X(s));\pi(s)
\big)
\right\} ds
\\[2mm]
& - \mathbb E \int_0^{T\wedge\tau_N}  e^{-(\rho + \delta) s }   \left( \frac{ c(s)^{1-\gamma}}{1-\gamma}
+ \delta \frac{\big(k B(s)\big)^{1-\gamma}}{1-\gamma}\right)    ds {.}
\end{split}\end{multline}
Now we let $N \rightarrow +\infty$.
The first expectation converges thanks to the dominated convergence theorem. Moreover the integrands on the right hand side are both nonnegative.
Thus we can apply to them the monotone convergence theorem. Their limits must be finite since also the left hand side is finite. Hence
   \begin{multline}\label{EXP_VER_INF_RET_IIInew}
 \begin{split}
&\bar v\big(w,x\big)=
\mathbb E \int_0^{T\wedge \tau}  e^{-(\rho + \delta) s }   \left( \frac{ c(s)^{1-\gamma}}{1-\gamma}
+ \delta \frac{\big(k B(s)\big)^{1-\gamma}}{1-\gamma}\right)ds
\\[2mm]
+&\mathbb E  \int_0^{T\wedge \tau} e^{-(\rho + \delta) s }\left\{
\mathbb H_{max}\big(
W_{\pi}(s),X(s),D\bar v(W_{\pi}(s),X(s)),D^2 \bar v(W_{\pi}(s),X(s))
\big)\right.
\\[2mm]
&\qquad\qquad\left.- \mathbb H_{cv}\big(
W_{\pi}(s),X(s),D\bar v(W_{\pi}(s),X(s)),D^2\bar v(W_{\pi}(s),X(s));\pi(s)
\big)
\right\} ds
\\[2mm]
&+
\E\left[e^{-(\rho + \delta) (T \wedge \tau) }
 \bar v\big(W_{\pi}(T \wedge \tau), X(T \wedge \tau)\big) \right].
\end{split}\end{multline}
Let now $T\to +\infty$.
The last term above converges to $0$ thanks to
\eqref{EQ_DECREASING_PROD_INF_RETlim}.
The two integrals converge again thanks to monotone convergence and their limits are finite since the left hand side is finite. Hence we get
   \begin{multline}\label{EXP_VER_INF_RET_IIInewmore}
 \begin{split}
&\bar v\big(w,x\big)=
\mathbb E \int_0^{\tau}  e^{-(\rho + \delta) s }   \left( \frac{ c(s)^{1-\gamma}}{1-\gamma}
+ \delta \frac{\big(k B(s)\big)^{1-\gamma}}{1-\gamma}\right)ds
\\[2mm]
+&\mathbb E  \int_0^{\tau} e^{-(\rho + \delta) s }\left\{
\mathbb H_{max}\big(
W_{\pi}(s),X(s),D\bar v(W_{\pi}(s),X(s)),D^2 \bar v(W_{\pi}(s),X(s))
\big)\right.
\\[2mm]
&\qquad\qquad\left.- \mathbb H_{cv}\big(
W_{\pi}(s),X(s),D\bar v(W_{\pi}(s),X(s)),D^2\bar v(W_{\pi}(s),X(s));\pi(s)
\big)
\right\} ds
\end{split}\end{multline}
By the definition of $\tau$ and Lemma \ref{lm:Gammabar}-(ii)
it follows that
$$
J(w,x;\pi)=\E \int_0^{\tau}  e^{-(\rho + \delta) s }   \left( \frac{ c(s)^{1-\gamma}}{1-\gamma}
+ \delta \frac{\big(k B(s)\big)^{1-\gamma}}{1-\gamma}\right)ds <+\infty
$$
and the claim follows.
\end{proof}

\bigskip

By the statement of Proposition \ref{pr:fundid} we get the following.
\begin{corollary}\label{cr:FINITENESS_VALUE_FUNCTION}
Let $\gamma \in (0,1)$. Then the value function $V$ is finite on $\calh_+$
and $V(w,x)\le \bar v(w,x)$ for every $(w,x)\in \calh_+$.
\end{corollary}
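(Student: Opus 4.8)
The plan is to read the statement off the fundamental identity of Proposition~\ref{pr:fundid}, splitting according to whether the initial point is interior to $\calh_+$ or on its boundary. First I would treat $(w,x)\in\calh_{++}$. Fix any $\pi=(c,B,\theta)\in\Pi(w,x)$. By Proposition~\ref{pr:fundid} together with the pointwise inequality $\mathbb H_{max}-\mathbb H_{cv}\ge 0$ recorded in \eqref{POSITIVINESSfundid}, the expectation term on the right-hand side of \eqref{EXP_VER_INF_RET_III} is nonnegative, whence $J(w,x;\pi)\le\bar v(w,x)$. On the other hand, since $\gamma\in(0,1)$ the running utility $\frac{c^{1-\gamma}}{1-\gamma}+\delta\frac{(kB)^{1-\gamma}}{1-\gamma}$ is nonnegative, so $J(w,x;\pi)\ge 0$; hence $J(w,x;\pi)$ is a well-defined real number in $[0,\bar v(w,x)]$ for \emph{every} admissible $\pi$. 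Taking the supremum over $\pi\in\Pi(w,x)$ (which is nonempty: e.g. $c=B=0$ together with $\theta^\top(t)\sigma=-g_\infty X_0(t)\sigma_y^\top$ gives, via Lemma~\ref{lm:Gammabar}-(i), $\bar\Gamma_\infty(t)=e^{(r+\delta)t}\Gamma_\infty(w,x)\ge 0$) yields $0\le V(w,x)\le\bar v(w,x)<+\infty$.

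Next I would handle the boundary $(w,x)\in\partial\calh_+=\{\Gamma_\infty(w,x)=0\}$. Here the case $\bar\Gamma_\infty(0)=0$ of Lemma~\ref{lm:Gammabar}-(ii) forces every admissible $\pi$ to satisfy $c\equiv 0$ and $B\equiv 0$ $dt\otimes\P$-a.e., so that $J(w,x;\pi)=0$ for all $\pi\in\Pi(w,x)$ and therefore $V(w,x)=0$. Since $\bar v(w,x)=0$ on $\partial\calh_+$ by Remark~\ref{rm:vonboundary}, we again obtain $V(w,x)=\bar v(w,x)$, which is finite and satisfies the claimed inequality. Combining the two cases and using $\calh_+=\calh_{++}\cup\partial\calh_+$ gives finiteness of $V$ on all of $\calh_+$ and $V\le\bar v$ everywhere on $\calh_+$.

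I do not expect a genuine obstacle: once Proposition~\ref{pr:fundid} is in hand the argument is essentially bookkeeping, the only points needing (minor) care being the non-emptiness of $\Pi(w,x)$ and the boundary behaviour, both supplied by Lemma~\ref{lm:Gammabar}. It is worth stressing that the substantive assumption ruling out an infinite value function, namely Hypothesis~\ref{HYP_BETA-BETA_INFTY}-(ii), has already been consumed inside Lemma~\ref{LEMMA_LIMIT_AT_INFTY_GUESS_VALUE_FUNCTION_INFINITE_RETIREMENT} (the transversality estimate \eqref{EQ_DECREASING_PROD_INF_RETlim}) and hence inside Proposition~\ref{pr:fundid}, so it need not be invoked again here.
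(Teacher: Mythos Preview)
Your proof is correct and follows the same line as the paper's: use the fundamental identity \eqref{EXP_VER_INF_RET_III} together with the nonnegativity \eqref{POSITIVINESSfundid} to obtain $J(w,x;\pi)\le\bar v(w,x)$ and then pass to the supremum. The paper's argument is considerably more laconic (it does not separately discuss $\partial\calh_+$ or non-emptiness of $\Pi(w,x)$), so your explicit treatment of the boundary via Lemma~\ref{lm:Gammabar}-(ii) and your admissibility check are welcome additions rather than a different method.
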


\begin{proof}
It is enough to observe that the integrand in \eqref{EXP_VER_INF_RET_III}
is positive, hence, we have, for every $(w,x)\in \calh_+$ and $\pi=(c,B,\theta)\in \Pi(w,x)$,
$$
\bar v(w,x) \ge J(w,x;\pi).
$$
Since ${V}(w,x) = \sup_{\pi\in \Pi(w,x)}J(w,x;\pi)$, the claim immediately follows.
\end{proof}


{\color{black}
\begin{remark}\label{rm:newuniqueness}
Observe that, from the proof the above Proposition \ref{pr:fundid}, we easily obtain that the fundamental identity \eqref{EXP_VER_INF_RET_III} holds when, in place of $\bar v$, we put any classical solution $v$ of the HJB equation \eqref{eq:HJB1} which satisfies ($\tau$ being the first exit time from $\calh_{++}$),
\begin{equation}\label{eq:trasvnew}
\lim_{T\to + \infty}\E\left[e^{-(\rho + \delta) (T \wedge \tau) }
v\big(W_{\pi}(T \wedge \tau), X(T \wedge \tau)\big) \right]=0.
\end{equation}
A sufficient condition for such limit to hold is the following:
$v=0$ on $\partial \calh_{++}$ and
$|v(w,x)| \le C(1+|w|^{1-\gamma}+|x|^{1-\gamma})$ on $\calh_{++}$. The latter indeed allows us to prove that \eqref{EQ_DECREASING_PROD_INF_RETlim} holds for $v$ with
essentially the same proof provided in Lemma \ref{LEMMA_LIMIT_AT_INFTY_GUESS_VALUE_FUNCTION_INFINITE_RETIREMENT}
for $\bar v$.
\\
The above implies that Corollary \ref{cr:FINITENESS_VALUE_FUNCTION}
holds for $v$, too.
This proves a one-side comparison result: every classical solution of the HJB equation
\eqref{eq:HJB1}, satisfying \eqref{eq:trasvnew}, is larger than the value function $V$.
\\
One may then ask if classical solutions (satisfying the boundary condition
$v=0$ on $\partial \calh_{++}$) are unique. In state constraints problem like this one, the answer is, in general, negative (see e.g. \cite[Remark 6.2]{BarucciGozziSwiech00} for a simple example in one dimension).
The best one can expect in such cases is one-side comparison results of the type just mentioned (see e.g. \cite{KocanSoravia98,Soravia99}).
\\
The reason of this fact lies exactly in the presence of state constraints.
Indeed, if no constraints are present, we can take any classical solution $v$
of the HJB equation and use identity \eqref{EXP_VER_INF_RET_III} for such $v$.
It is clear that, if we find an admissible control such that the
integrand in \eqref{EXP_VER_INF_RET_III} is zero, then this control is optimal and we have $v=V$. Without state constraints this would be true possibly under further regularity assumptions (in this case, for example, by requiring $v_w>0$ and $v_{ww}<0$) to give sense to the feedback map and obtain a solution of the closed loop equation. In the presence of state constraints, however, it may happen that for any regular solution $v$ we find a control that makes the integrand in \eqref{EXP_VER_INF_RET_III} vanish but is not admissible.
\\
Clearly this leaves the possibility that, in this particular case, uniqueness hold in some form. However, this is not needed to solve our problem as we will see in the next subsections.
\end{remark}
}

\subsection{Verification Theorem and optimal feedbacks when $\gamma \in(0,1)$}
\label{SSE:VT01}

Now we proceed to show that
$\bar v=V$ and to find the optimal strategies in feedback form{.}

First, we provide the following definitions.

\begin{definition}\label{DEF_ADMISSIBLE_FEEDBACK STRATEGY_INF_RET}
Fix $(w,x) \in \calh_+$. A strategy $\bar \pi:=\left(\bar c,\bar B, \bar \theta \right)$
is called an \textit{optimal strategy} if
$\left(\bar c,\bar B, \bar \theta \right) \in \Pi\left(w,x\right)$,
and if it achieves the supremum in (\ref{DEF_VALUE_FUNCTION_INF_RET}),
i.e.
\begin{align}
V\left(w,x\right)=  \mathbb E \left(\int_{0}^{+\infty} e^{-(\rho+ \delta) s }
\left( \frac{\bar c(s)^{1-\gamma}}{1-\gamma}
+ \delta \frac{\big(k \bar B(s)\big)^{1-\gamma}}{1-\gamma}\right) ds
\right) .
\end{align}
\end{definition}

\begin{definition}
We say that a function
$\left( \textbf{C}, \textbf{B}, \Theta\right): \calh_+ \longrightarrow \mathbb R_{+}\times \mathbb R_{+}\times \mathbb R^{n}$
is an \textit{optimal feedback map} if, for every $(w,x)\in \calh_+$
the closed loop equation
\begin{eqnarray}
\begin{split}
\left\{\begin{array}{ll}
dW(t) =&  \left[ (r+\delta) W(t)+\Theta^\top\left( W(t), X(t)\right) (\mu-r)  +  X_0(t) - \textbf{C}\left( W(t), X(t)\right) - \delta\textbf{B}\left( W(t), X(t)\right)\right] dt  \\
  &+  \Theta^\top \left( W(t), X(t)\right) \sigma dZ(t),\\
  W(0)=& w,
 \end{array}\right. \end{split}
\end{eqnarray}
coupled with the second of (\ref{DYN_W_X_INFINITE_RETIREMENT_II}), i.e.
\begin{equation}\label{eq:delaySEnew}
dX(t)=AX(t)+(CX(t))^\top dZ_t,\qquad X(0)=x,
\end{equation}
has a unique solution $(W^*,X)$, and
 the associated control strategy
$\left(\bar c, \bar B, \bar \theta \right)$
\begin{align}
\begin{split}
\left\{\begin{array}{ll}
\bar c(t)&:=  \textbf{C}\left(W^*(t),X(t)\right) {,} \\
\bar B(t)&:=\textbf{B}\left( W^*(t),X(t)\right){,}\\
\bar \theta(t)&:=\Theta    \left( W^*(t),X(t)\right)
\end{array}\right. \end{split}
\end{align}
is an optimal strategy.
\end{definition}

%
%
Recalling that
\begin{equation}\label{Gamma-infty}
\Gamma_{\infty} (w,x): =w+ g_{\infty}x_0+ \langle h_{\infty}, x_1 \rangle,
\end{equation}
we define
the map
\begin{align}\label{EQ_DEF_FEEDBACK_MAP}
\begin{split}
\left\{\begin{array}{ll}
\textbf{C}_{f}( w,x)&:=   f_{\infty}^{-1}\Gamma_{\infty}( w,x){,}    \\
\textbf{B}_{f}( w,x)&:=k^{ -b} f_{\infty}^{-1}\Gamma_{\infty}( w,x){,}  \\
\Theta_f( w,x)&:=(\sigma\sigma^\top)^{-1} (\mu-r\mathbf 1) \frac{\Gamma_{\infty} ( w,x)}{\gamma  }-   (\sigma\sigma^\top)^{-1} \sigma   \sigma_y  g_{\infty} x_0
\\[1.5mm]
&=
\frac{1}{\gamma  }\Gamma_{\infty} ( w,x)(\sigma^\top)^{-1} \kappa
- g_{\infty} x_0(\sigma^\top)^{-1}  \sigma_y {.}
\end{array}\right.\end{split}
\end{align}
Observe that this map is obtained taking the maximum points of the Hamiltonian given in \eqref{MAX_POINTS_HAMILTONIAN__INF_RET} and substituting, in place of $p_1, P_{11}, P_{12}$
the derivatives $\bar v_w, \bar v_{ww}, \bar v_{w,x_0}$, respectively.

We aim to prove that this is an optimal feedback map.
For given $(w,x)\in \calh_+$, denote with $W_f^*(t)$ the unique solution of the associated closed loop equation
(coupled with \eqref{eq:delaySEnew}),
\begin{eqnarray}\label{CLOSED_LOOP_EQUATION_W}
\begin{split}
\left\{\begin{array}{ll}
dW(t) =&  \left[ (r+\delta) W(t)+ \Theta_f^\top\left(W(t),X(t)\right) (\mu-r\mathbf 1)  +  X_0(t) - \textbf{C}_f\left(W(t),X(t)\right) - \delta \textbf{B}_f\left(W(t),X(t)\right) \right] dt  \\
  &+  \Theta_f^\top (t) \sigma dZ(t),\\
  W(0)=& w,
 \end{array}\right. \end{split}
\end{eqnarray}
and set
\begin{equation}\label{DEF_GAMMA_INFTY_STAR}
\Gamma_{\infty}^*(t)= \Gamma_{\infty} \big(W_f^*(t), X(t)\big)  =W_f^*(t) + g_{\infty}X_0(t)+ \langle h_{\infty}, X_1(t)\rangle.
\end{equation}
The control strategy associated with (\ref{EQ_DEF_FEEDBACK_MAP}) is
\begin{eqnarray}\label{EQ_FEEDBACK_STRATEGIES_INF_RET}
\left\{\begin{split}
\bar c_f(t)&:= \textbf{C}_f\left( W^*_f(t),X(t)\right) =  f_{\infty}^{-1}
\Gamma_{\infty}^*(t) ,  \\
\bar B_f(t)&:=\textbf{B}_f \left( W^*_f(t),X(t)\right) =k^{ -b} f_{\infty}^{-1}
\Gamma_{\infty}^*(t)  ,  \\
 \bar \theta_f(t)&:= \Theta_f\left( W^*_f(t),X(t)\right) =
\frac{\Gamma^*_{\infty} (t)}{\gamma  }(\sigma^\top)^{-1} \kappa
- g_{\infty} X_0(t)(\sigma^\top)^{-1}  \sigma_y{.}
\end{split}
\right.
\end{eqnarray}
Note that $\Gamma_{\infty}^*(t)$ is the total wealth associated to the strategy
$(\bar c_f, \bar B_f, \bar \theta_f)$.
We need the following lemma which is needed to show the admissibility of such strategy.

\begin{lemma}\label{PROP_DYNAMIC_H_INF_RET}
Let $(w,x)\in \calh_+$. The process $\Gamma_{\infty}^*$ defined in (\ref{DEF_GAMMA_INFTY_STAR}) is a stochastic exponential,
and it has dynamic
\begin{align}\label{DYN_H^*_INFTY}
\begin{split} d  \Gamma_{\infty}^* (t) =& \Gamma_{\infty}^* (t) \Big(  r + \delta +\frac{1}{\gamma} \kappa^\top \kappa- f_{\infty}^{-1}\big(1 
+\delta k^{-b}\big) \Big)dt
+ \frac{ \Gamma_{\infty}^* (t)}{\gamma } \kappa^\top dZ(t).
\end{split}
\end{align}
\end{lemma}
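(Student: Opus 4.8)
The plan is to feed the feedback controls \eqref{EQ_FEEDBACK_STRATEGIES_INF_RET} into the dynamics \eqref{eq:GammaProcessSDE} of the total wealth process already established in Lemma~\ref{lm:Gammabar}(i), and then to recognise that the resulting equation is linear in $\Gamma_\infty^*$. Before doing so I would record that the closed loop equation \eqref{CLOSED_LOOP_EQUATION_W}, coupled with \eqref{eq:delaySEnew}, is a linear SDE for $W$ whose coefficients are affine in $W$ and depend measurably on the (already constructed) process $X_0$; hence it admits a unique strong solution $W_f^*$ by \cite[Chapter~5.6]{KARATZAS_SHREVE_91}, so that $\Gamma_\infty^*$ in \eqref{DEF_GAMMA_INFTY_STAR} is well defined. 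I would also stress that the identity \eqref{eq:GammaProcessSDE} is obtained purely by It\^o's formula applied to $W$ and to $\langle (g_\infty,h_\infty),X\rangle_{M_2}$ together with the state equation, and does \emph{not} use admissibility of the strategy; it therefore applies verbatim with $(c,B,\theta)$ replaced by $(\bar c_f,\bar B_f,\bar\theta_f)$ and $W_\pi$ by $W_f^*$, yielding
\begin{align*}
d\Gamma_\infty^*(t) = {} & \Big[(r+\delta)\Gamma_\infty^*(t) - \bar c_f(t) - \delta \bar B_f(t) + \big(\bar\theta_f^\top(t)\sigma + g_\infty X_0(t)\sigma_y^\top\big)\kappa\Big]dt \\
& + \big[\bar\theta_f^\top(t)\sigma + g_\infty X_0(t)\sigma_y^\top\big]dZ(t).
\end{align*}

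The computation then reduces to two substitutions. From \eqref{EQ_FEEDBACK_STRATEGIES_INF_RET} one has $\bar\theta_f(t) = \tfrac{1}{\gamma}\Gamma_\infty^*(t)(\sigma^\top)^{-1}\kappa - g_\infty X_0(t)(\sigma^\top)^{-1}\sigma_y$, so that, using $\big((\sigma^\top)^{-1}\big)^\top\sigma = \sigma^{-1}\sigma = I$,
\begin{equation*}
\bar\theta_f^\top(t)\sigma + g_\infty X_0(t)\sigma_y^\top = \frac{1}{\gamma}\Gamma_\infty^*(t)\kappa^\top - g_\infty X_0(t)\sigma_y^\top + g_\infty X_0(t)\sigma_y^\top = \frac{1}{\gamma}\Gamma_\infty^*(t)\kappa^\top .
\end{equation*}
Hence the diffusion coefficient is $\tfrac{1}{\gamma}\Gamma_\infty^*(t)\kappa^\top$ and the It\^o contribution to the drift is $\tfrac{1}{\gamma}\Gamma_\infty^*(t)\kappa^\top\kappa$. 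Substituting also $\bar c_f(t) = f_\infty^{-1}\Gamma_\infty^*(t)$ and $\bar B_f(t) = k^{-b}f_\infty^{-1}\Gamma_\infty^*(t)$ and factoring out $\Gamma_\infty^*(t)$ gives
\begin{equation*}
d\Gamma_\infty^*(t) = \Gamma_\infty^*(t)\Big(r+\delta + \tfrac{1}{\gamma}\kappa^\top\kappa - f_\infty^{-1}\big(1+\delta k^{-b}\big)\Big)dt + \frac{\Gamma_\infty^*(t)}{\gamma}\kappa^\top dZ(t),
\end{equation*}
which is precisely \eqref{DYN_H^*_INFTY}.

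Finally, \eqref{DYN_H^*_INFTY} has the form $d\Gamma_\infty^*(t) = \Gamma_\infty^*(t)\big(a\,dt + b^\top dZ(t)\big)$ with constant $a,b$ and initial value $\Gamma_\infty^*(0) = \Gamma_\infty(w,x) \ge 0$; its unique solution is the stochastic exponential $\Gamma_\infty^*(t) = \Gamma_\infty(w,x)\exp\!\big((a-\tfrac12 b^\top b)\,t + b^\top Z(t)\big)$, which proves the claim. I do not expect a genuine obstacle here: the argument is a direct substitution followed by the explicit solution of a geometric SDE. The only point deserving care is the one flagged above, namely that \eqref{eq:GammaProcessSDE} may legitimately be invoked for the feedback strategy even though its admissibility is not yet known (indeed, the present lemma is one of the ingredients later used to establish that admissibility); this is fine because \eqref{eq:GammaProcessSDE} is a pure It\^o computation that never invokes the state constraint.
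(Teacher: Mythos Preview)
Your proof is correct and follows essentially the same approach as the paper: both substitute the feedback controls into the It\^o dynamics of $\Gamma_\infty$ and simplify. The only difference is presentational---you invoke the already-computed SDE \eqref{eq:GammaProcessSDE} from Lemma~\ref{lm:Gammabar}(i) directly (correctly noting that its derivation uses only It\^o's formula, not admissibility), whereas the paper reassembles $d\Gamma_\infty^* = dW_f^* + d\langle(g_\infty,h_\infty),X\rangle_{M_2}$ from scratch before simplifying; your shortcut is a mild economy but not a different argument.
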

\begin{proof}
Substituting \eqref{EQ_FEEDBACK_STRATEGIES_INF_RET} into the equation
\eqref{CLOSED_LOOP_EQUATION_W} for $W_f^*(t)$, we get
\begin{align}\label{CLOSED_LOOP_W_INF_RET}
\begin{split}
d W_f^*(t) =&\Big\{ W_f^*(t)(r+\delta) +
 \Gamma_{\infty}^*(t)\big[ \frac{\kappa^\top \kappa}{\gamma }   - f_{\infty}^{-1}\big(1 
 + \delta k^{-b}\big)  \big]
 + X_0(t)
-   g_{\infty} X_0(t) \sigma_y^\top\kappa
\Big\}dt\\
&+ \left\{ \frac{\Gamma_{\infty}^*(t)}{ \gamma} \kappa^\top - g_{\infty} X_0(t)\sigma_y^\top  \right\}dZ(t).
\end{split}
\end{align}
\noindent
Then, recalling \eqref{eq:hinftyX1} we have
\begin{align}\label{DIFFERENTIAL_G_X_0_H_X_1_INF_RET}
\begin{split}
d\langle (g_{\infty},h_{\infty}),& (X_0(t),X_1(t)) \rangle_{M_2}=
\\
&
= \big[\big(\mu_y + \beta\big) g_{\infty} X_0(t) -X_0(t)+ (r+\delta)\langle h_{\infty},   X_1(t) \rangle
\big]  dt+   g_{\infty}  X_0(t)  \sigma_y^\top dZ_t,
\end{split}
\end{align}
We thus obtain, similarly to \eqref{eq:bargammaito},
\begin{align}\label{CLOSED_LOOP_W_FINITE_HOR}
\begin{split}
d \Gamma_{\infty}^*(t) & =  d W_f^{*}(t) + d \langle (g_{\infty}, h_{\infty}), \big(X_0(t),X_1(t)\big)\rangle_{M_2} \\
=&\bigg\{W_f^{*}(t)(r+\delta) +
 \Gamma_{\infty}^*(t)\left[  \frac{\kappa^\top \kappa}{\gamma}   - f_{\infty}^{-1}\big(1 
 + \delta k^{-b}\big)  \right]
 + X_0(t)
- g_{\infty}X_0(t)\sigma_y^\top\kappa   \\
&+ ( \mu_y + \beta) g_{\infty} X_0(t)-X_0(t) + (r+\delta)  \langle   h_{\infty},X_1(t)\rangle
\bigg\}dt\\
&+ \left\{\frac{\Gamma_{\infty}^*(t) }{ \gamma }\kappa^\top - g_{\infty}X_0(t) \sigma_y^\top
+      g_{\infty} X_0 (t)\sigma_y ^\top \right\}dZ(t)\\
=& \Gamma_{\infty}^*(t)\left[ (r+\delta)+ \frac{\kappa^\top \kappa}{\gamma}   - f_{\infty}^{-1}\big(1 
+ \delta k^{-b}\big)  \right] dt
+ \frac{\Gamma_{\infty}^*(t)}{ \gamma} \kappa^\top dZ(t) ,
\end{split}
\end{align}
where the last equality follows by noting
that $\mu_y + \beta - \sigma_y^\top \kappa = r+\delta$. The claim is proved.
\end{proof}
\begin{theorem}[Verification Theorem and Optimal feedback Map, $\gamma\in (0,1)$] \label{th:VERIFICATION_THEOREM_INF_RET}
Let $\gamma \in (0,1)$. We have $V=\bar v$ in $\calh_+$.
Moreover the function $\left(\textbf{C}_f, \textbf{B}_f, \Theta_f\right)$ defined in (\ref{EQ_DEF_FEEDBACK_MAP}) is an optimal feedback map.
Finally, for every $(w,x)\in \calh_{+}$ the strategy
$\bar\pi_f:=(\bar c_f,\bar B_f,\bar\theta_f)$ is the unique optimal strategy.
\end{theorem}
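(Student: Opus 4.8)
The plan is to establish the three assertions of Theorem~\ref{th:VERIFICATION_THEOREM_INF_RET} in the natural order: (1) that the feedback strategy $\bar\pi_f=(\bar c_f,\bar B_f,\bar\theta_f)$ is admissible, i.e. $\bar\pi_f\in\Pi(w,x)$; (2) that $\bar v=V$ on $\calh_+$, together with optimality of $\bar\pi_f$; (3) uniqueness of the optimal strategy. The whole argument rests on the fundamental identity \eqref{EXP_VER_INF_RET_III} of Proposition~\ref{pr:fundid} and on Lemma~\ref{PROP_DYNAMIC_H_INF_RET}, which already tells us the closed-loop total wealth $\Gamma_\infty^*(t)$ is a geometric Brownian motion, hence strictly positive for all $t$ whenever $\Gamma_\infty(w,x)>0$.

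\medskip

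\noindent\textbf{Admissibility.} First I would treat the case $(w,x)\in\calh_{++}$. By Lemma~\ref{PROP_DYNAMIC_H_INF_RET} the process $\Gamma_\infty^*$ solves a linear SDE with deterministic coefficients, so it is a strictly positive stochastic exponential; since $\Gamma_\infty^*(t)>0$ for all $t\ge 0$ $\mathbb P$-a.s., the state constraint $(W_f^*(t),X(t))\in\calh_+$ holds for all $t$, in fact with strict inequality. It remains to check the integrability conditions in $\Pi^0$: from the explicit form \eqref{EQ_FEEDBACK_STRATEGIES_INF_RET}, $\bar c_f,\bar B_f$ are proportional to $\Gamma_\infty^*$ and $\bar\theta_f$ is an affine function of $\Gamma_\infty^*(t)$ and $X_0(t)$, so all the required $L^1$ / $L^2$ bounds on $\Omega\times[0,+\infty)$ follow from the moment estimates for the geometric Brownian motion $\Gamma_\infty^*$ (using Hypothesis~\ref{HYP_BETA-BETA_INFTY}-(ii), which makes the effective exponential decay rate positive after discounting) and the analogous moment bounds for the mild solution $X$ of \eqref{INFINITE_DIMENSIONAL_STATE_EQUATION}. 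When $(w,x)\in\partial\calh_+$, i.e. $\Gamma_\infty(w,x)=0$, the feedback gives $\bar c_f\equiv\bar B_f\equiv 0$ and $\bar\theta_f^\top\sigma+g_\infty X_0\sigma_y^\top\equiv 0$ (by Lemma~\ref{PROP_DYNAMIC_H_INF_RET} with zero initial datum, $\Gamma_\infty^*\equiv 0$), which is exactly the degenerate admissible strategy of Lemma~\ref{lm:Gammabar}-(ii); it is trivially admissible and yields $J(w,x;\bar\pi_f)=0=\bar v(w,x)$.

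\medskip

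\noindent\textbf{Identification $\bar v=V$ and optimality.} Here I would plug $\bar\pi_f$ into the fundamental identity \eqref{EXP_VER_INF_RET_III}. By construction of the feedback map — which was read off from the maximizers \eqref{MAX_POINTS_HAMILTONIAN__INF_RET} of the Hamiltonian with $p_1,P_{11},P_{12}$ replaced by $\bar v_w,\bar v_{ww},\bar v_{wx_0}$ evaluated along $(W_f^*,X)$ — the integrand $\mathbb H_{max}-\mathbb H_{cv}$ in \eqref{EXP_VER_INF_RET_III} vanishes identically along the closed-loop trajectory, provided one checks that $\bar v_w>0$ and $\bar v_{ww}<0$ hold at every point $(W_f^*(t),X(t))$ — which is true since these points lie in $\calh_{++}$, where \eqref{DERIVATIVES_BAR_V_INF_RET} gives the required signs. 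Hence \eqref{EXP_VER_INF_RET_III} collapses to $\bar v(w,x)=J(w,x;\bar\pi_f)$. Combined with Corollary~\ref{cr:FINITENESS_VALUE_FUNCTION}, which gives $V\le\bar v$, and with $J(w,x;\bar\pi_f)\le V(w,x)$ by definition of the sup, we conclude $V(w,x)=\bar v(w,x)=J(w,x;\bar\pi_f)$ for every $(w,x)\in\calh_{++}$; the boundary case was handled above. In particular $\bar\pi_f$ is optimal, so $(\textbf{C}_f,\textbf{B}_f,\Theta_f)$ is an optimal feedback map (existence and uniqueness of the closed-loop solution follow from linearity of \eqref{CLOSED_LOOP_EQUATION_W}--\eqref{eq:delaySEnew}).

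\medskip

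\noindent\textbf{Uniqueness.} Let $\pi=(c,B,\theta)\in\Pi(w,x)$ be any optimal strategy. Applying \eqref{EXP_VER_INF_RET_III} to $\pi$ and using $J(w,x;\pi)=V(w,x)=\bar v(w,x)$, we get
\[
\mathbb E\int_0^{\tau}e^{-(\rho+\delta)s}\Big\{\mathbb H_{max}\big(\cdots\big)-\mathbb H_{cv}\big(\cdots;\pi(s)\big)\Big\}\,ds=0,
\]
and since the integrand is nonnegative it must vanish $dt\otimes\mathbb P$-a.e.\ on $\{s<\tau\}$. Because the map $(c,B,\theta)\mapsto\mathbb H_{cv}(x_0,p_1,P_{11},P_{12};c,B,\theta)$ is, for $p_1=\bar v_w>0$ and $P_{11}=\bar v_{ww}<0$, strictly concave and attains its maximum at the \emph{unique} point \eqref{MAX_POINTS_HAMILTONIAN__INF_RET}, we deduce that on $\{s<\tau\}$ the controls $(c(s),B(s),\theta(s))$ coincide a.e.\ with the feedback values, i.e.\ $\pi$ satisfies the closed-loop equation on $[0,\tau)$; by pathwise uniqueness for \eqref{CLOSED_LOOP_EQUATION_W} this forces $\pi=\bar\pi_f$ on $[0,\tau)$ and $W_\pi=W_f^*$ there, whence $\tau=+\infty$ a.s.\ (since $\Gamma_\infty^*>0$ always), so $\pi=\bar\pi_f$ on all of $[0,+\infty)$. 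If instead $(w,x)\in\partial\calh_+$, Lemma~\ref{lm:Gammabar}-(ii) directly forces any admissible $\pi$ to equal the degenerate strategy, which is $\bar\pi_f$.

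\medskip

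\noindent\textbf{Main obstacle.} The routine but genuinely delicate point is the admissibility step — specifically verifying the $L^1(\Omega\times[0,+\infty))$ integrability of $\bar c_f,\bar B_f$ and the $L^2$ integrability of $\bar\theta_f$. This is where Hypothesis~\ref{HYP_BETA-BETA_INFTY}-(ii) is essential: one must show that the drift rate $r+\delta+\tfrac1\gamma\kappa^\top\kappa-f_\infty^{-1}(1+\delta k^{-b})$ of $\Gamma_\infty^*$ is strictly dominated by $\rho+\delta$, so that the discounted process has integrable time-integral; a short computation using $f_\infty=(1+\delta k^{-b})\nu$ and the definition \eqref{DEF_nu} of $\nu$ shows this surplus rate equals $(\rho+\delta)/\gamma - (1-\gamma)(\cdots)/\gamma>0$ precisely under \eqref{HYP_POSITIVITY_DEN_NU}. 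The infinite-horizon transversality, already packaged in \eqref{EQ_DECREASING_PROD_INF_RETlim}, then does the rest.
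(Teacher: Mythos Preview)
Your proposal is correct and follows essentially the same route as the paper's proof: boundary case via Lemma~\ref{lm:Gammabar}-(ii); admissibility in $\calh_{++}$ from the stochastic-exponential structure of $\Gamma_\infty^*$ (Lemma~\ref{PROP_DYNAMIC_H_INF_RET}); optimality by plugging $\bar\pi_f$ into the fundamental identity \eqref{EXP_VER_INF_RET_III} so that $\mathbb H_{max}-\mathbb H_{cv}$ vanishes, then sandwiching with Corollary~\ref{cr:FINITENESS_VALUE_FUNCTION}; uniqueness again from \eqref{EXP_VER_INF_RET_III} and strict concavity of $\mathbb H_{cv}$.

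Two small differences worth flagging. First, for uniqueness after $\tau$ the paper simply invokes Lemma~\ref{lm:Gammabar}-(ii) on $\{t>\tau\}$, whereas you argue that identifying $\pi$ with $\bar\pi_f$ on $[0,\tau)$ forces $\tau=+\infty$ a.s.; both work. Second, you are more scrupulous than the paper about the $L^1/L^2$ integrability required by $\Pi^0$: the paper's proof simply says positivity of $\Gamma_\infty^*$ ``is enough to prove admissibility'' and does not verify those global bounds. Your instinct to check them is sound, but be careful: the drift of $\mathbb E[\Gamma_\infty^*(t)]$ is $r+\delta+\tfrac{\kappa^\top\kappa}{\gamma}-\nu^{-1}$, and Hypothesis~\ref{HYP_BETA-BETA_INFTY}-(ii) alone does not force this to be negative, so the global $L^1(\Omega\times[0,\infty))$ bound for $\bar c_f,\bar B_f$ need not hold as stated. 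This is a looseness in the paper's definition of $\Pi^0$ (local integrability is what is actually needed for the state equation and for all the arguments used), not a defect in your strategy.
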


\begin{proof}
First take $(w,x) \in \partial\calh_+=\{\Gamma_\infty=0\}$.
In this case we have, by equation \eqref{DYN_H^*_INFTY},
for every $t\ge 0$, $\Gamma^*_{\infty}(t)=0$, $\P$-a.s..
This implies, by \eqref{EQ_FEEDBACK_STRATEGIES_INF_RET}, that it must be
$$
\bar c_f \equiv 0,\qquad
\bar B_f \equiv 0,\qquad
\bar \theta_f \equiv - g_{\infty} X_0(t)(\sigma^\top)^{-1}  \sigma_y {.}
$$
Hence, by Lemma \ref{lm:Gammabar}-(ii), we get that this is the
only admissible strategy (see \eqref{eq:zerostrategy}),
hence it is optimal.

Now take $(w,x) \in \calh_{++}=\{\Gamma_\infty>0\}$.
First we observe that $(\bar c_f, \bar B_f,\bar \theta_f)$ is an admissible
strategy.
Indeed
by Lemma \ref{PROP_DYNAMIC_H_INF_RET} $\Gamma_{\infty}^*(\cdot)$ is a stochastic exponential, it is therefore $\P$-a.s. strictly positive for any strictly positive initial condition $\Gamma_{\infty}^*(0)=\Gamma_\infty(w,x)$.
This implies, in particular, that the constraint in (\ref{NO_BORROWING_WITHOUT_REPAYMENT_EXPLICIT_INF_RET})
is always satisfied and, consequently, that the couple $(\bar c_f,\bar B_f)$ is non negative, which is enough to prove admissibility.

Concerning optimality we observe that, as recalled above,
the feedback map is obtained taking the maximum points of the Hamiltonian given in \eqref{EQ_DEF_FEEDBACK_MAP} and substituting, in place of $p_1, P_{11}, P_{12}$
the derivatives $\bar v_w, \bar v_{ww}, \bar v_{w,x_0}$, respectively.

This implies that, substituting the strategy
$\bar\pi_f:=(\bar c_f,\bar B_f,\theta_f)$
in the fundamental identity \eqref{EXP_VER_INF_RET_III} we obtain
$$
\bar v(w,x)=J\left(w,x;\bar\pi_f\right){.}
$$
Hence, using Corollary \ref{cr:FINITENESS_VALUE_FUNCTION} and the definition of the value function{,} we get
$$
V(w,x)\le \bar v(w,x)=J\left(w,x;\bar\pi_f\right)\le V(w,x){,}
$$
which immediately gives $ V(w,x)=J\left(w,x;\bar\pi_f\right)$, hence the required optimality.

We now prove uniqueness. When $(w,x)\in \partial\calh_+$ the claim follows from Lemma \ref{lm:Gammabar}-(ii). When $(w,x)\in \calh_{++}$,
the claim follows from the fundamental identity \eqref{EXP_VER_INF_RET_III}.
Indeed, since $\bar v=V$ if a given strategy $\pi$ is optimal at
$(w,x)\in \calh_{++}$ it must satisfy
$\bar v(w,x)=J(w,x;\pi)$, which implies, substituting in \eqref{EXP_VER_INF_RET_III},
that the integral in \eqref{EXP_VER_INF_RET_III} is zero. This implies that, on $[0,\tau]$ we have $\pi=\bar\pi_f$, $dt \otimes \P$-a.e. This gives uniqueness, as, for $t> \tau$, we still must have $\pi=\bar\pi_f$, $dt \otimes \P$-a.e., due to Lemma
\ref{lm:Gammabar}-(ii).
\end{proof}

\subsection{The case $\gamma >1$}
\label{SSE:GAMMA>1}

We now treat the case when $\gamma>1$.
We cannot follow the same path as done in the case $\gamma \in (0,1)$.
Indeed the proof of the crucial limiting condition \eqref{EQ_DECREASING_PROD_INF_RETlim} in Lemma \ref{LEMMA_LIMIT_AT_INFTY_GUESS_VALUE_FUNCTION_INFINITE_RETIREMENT} does not work as it is, since Fatou's Lemma cannot be applied here. Indeed in such proof, we use Fatou's Lemma with the liminf inequality: this requires a uniform bound from below
which we are not able to prove here.\footnote{In this respect it seems that the proof of Proposition 4.26 in \cite{DFGFS} (case ($\gamma<0$)
is not completely correct as it uses Fatou's lemma in the wrong direction.}

We have to follow a different path.
We start by looking at the value function $V$.
We already know that $-\infty \le V(w,x)\le 0$ for every $(w,x)\in \calh_+$
and that, by Lemma \ref{lm:Gammabar}-(ii), $V(w,x)=-\infty$ for every
$(w,x)\in \partial\calh_+$.

To prove that $V(w,x)>-\infty$ on $\calh_{++}$ it is enough to find an admissible strategy $\pi$ such that $J(w,x;\pi) > - \infty$. This is given in the following proposition.

\begin{proposition}\label{pr:Vfinitegamma>1}
Let $\gamma>1$ and let $(w,x)\in \calh_{++}$.
The strategy $\bar\pi_f:=(\bar c_f,\bar B_f,\bar\theta_f)$ defined in \eqref{EQ_FEEDBACK_STRATEGIES_INF_RET} is admissible at $(w,x)$. Moreover
\begin{equation}\label{eq:J=barv}
V(w,x)\ge J(w,x;\bar\pi_f)=\bar v(w,x) > - \infty.
\end{equation}
\end{proposition}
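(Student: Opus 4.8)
The plan is to exploit the fact that, under the feedback $\bar\pi_f$, the associated total wealth $\Gamma_\infty^*$ is an \emph{explicit} geometric Brownian motion (Lemma \ref{PROP_DYNAMIC_H_INF_RET}), so that admissibility and the value of the objective can both be read off directly; the only delicate point will be the convergence of the resulting time-integral, where Hypothesis \ref{HYP_BETA-BETA_INFTY}-(ii) enters.

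First I would check admissibility of $\bar\pi_f$ at $(w,x)$. Since $(w,x)\in\calh_{++}$, the initial total wealth $\Gamma_\infty^*(0)=\Gamma_\infty(w,x)$ is strictly positive; by Lemma \ref{PROP_DYNAMIC_H_INF_RET}, $\Gamma_\infty^*$ solves the linear SDE \eqref{DYN_H^*_INFTY} and is therefore a stochastic exponential, hence $\Gamma_\infty^*(t)>0$ for all $t\ge 0$, $\P$-a.s. Consequently the state constraint $\Gamma_\infty^*(t)=W_f^*(t)+g_\infty X_0(t)+\langle h_\infty,X_1(t)\rangle\ge 0$ holds for all $t$, while $\bar c_f=f_\infty^{-1}\Gamma_\infty^*\ge 0$ and $\bar B_f=k^{-b}f_\infty^{-1}\Gamma_\infty^*\ge 0$; the required integrability is immediate since $\Gamma_\infty^*$ and $X_0$ are exponential-type processes with finite moments of every order. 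This is exactly the argument already used in the proof of Theorem \ref{th:VERIFICATION_THEOREM_INF_RET} for the case $\gamma\in(0,1)$.

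Next I would compute $J(w,x;\bar\pi_f)$ by substituting \eqref{EQ_FEEDBACK_STRATEGIES_INF_RET} into \eqref{DEF_J_INF_RET}. Using $b=1-1/\gamma$ (so that $(1-b)(1-\gamma)=-b$ and hence $(k\bar B_f)^{1-\gamma}=k^{-b}\bar c_f^{\,1-\gamma}$) together with $f_\infty=(1+\delta k^{-b})\nu$, the running utility collapses to
\[
\frac{\bar c_f(t)^{1-\gamma}}{1-\gamma}+\delta\,\frac{(k\bar B_f(t))^{1-\gamma}}{1-\gamma}
=\frac{f_\infty^{\gamma}\,\nu^{-1}}{1-\gamma}\,\bigl(\Gamma_\infty^*(t)\bigr)^{1-\gamma}.
\]
Since $\gamma>1$ this quantity is nonpositive pointwise, so Tonelli's theorem applies unconditionally and $J(w,x;\bar\pi_f)=\frac{f_\infty^{\gamma}\nu^{-1}}{1-\gamma}\int_0^{+\infty}e^{-(\rho+\delta)t}\,\E\bigl[(\Gamma_\infty^*(t))^{1-\gamma}\bigr]\,dt$. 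From the explicit coefficients in \eqref{DYN_H^*_INFTY} and the log-normal moment formula, $\E[(\Gamma_\infty^*(t))^{1-\gamma}]=\Gamma_\infty(w,x)^{1-\gamma}e^{\lambda t}$ with $\lambda=(1-\gamma)\bigl(r+\delta+\frac{\kappa^\top\kappa}{2\gamma}-\nu^{-1}\bigr)$. One then verifies the algebraic identity $\rho+\delta-\lambda=\nu^{-1}$, which is a one-line rearrangement of the definition \eqref{DEF_nu} of $\nu$ (equivalently $\gamma\nu^{-1}=\rho+\delta-(1-\gamma)(r+\delta+\frac{\kappa^\top\kappa}{2\gamma})$). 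Since $\nu>0$ by Hypothesis \ref{HYP_BETA-BETA_INFTY}-(ii), the time-integral converges, $\int_0^{+\infty}e^{-(\rho+\delta-\lambda)t}dt=\nu$, whence $J(w,x;\bar\pi_f)=\frac{f_\infty^{\gamma}\nu^{-1}}{1-\gamma}\,\nu\,\Gamma_\infty(w,x)^{1-\gamma}=\frac{f_\infty^{\gamma}\Gamma_\infty(w,x)^{1-\gamma}}{1-\gamma}=\bar v(w,x)$. As $(w,x)\in\calh_{++}$ forces $\Gamma_\infty(w,x)>0$ and $\gamma>1$, $\bar v(w,x)$ is a finite negative number, so $J(w,x;\bar\pi_f)>-\infty$; admissibility of $\bar\pi_f$ then yields $V(w,x)\ge J(w,x;\bar\pi_f)$, which is \eqref{eq:J=barv}.

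The only genuinely delicate step is the convergence of the time-integral, and it is precisely there that Hypothesis \ref{HYP_BETA-BETA_INFTY}-(ii) is indispensable. The essential simplification relative to the case $\gamma\in(0,1)$ is that here the running utility has constant sign, which lets us bypass the Fatou/transversality argument of Lemma \ref{LEMMA_LIMIT_AT_INFTY_GUESS_VALUE_FUNCTION_INFINITE_RETIREMENT} (which genuinely fails for $\gamma>1$) and instead read off $J(w,x;\bar\pi_f)$ from a direct moment computation for the explicit log-normal process $\Gamma_\infty^*$. An alternative route, which I would mention but not carry out, is to apply It\^o's formula to $e^{-(\rho+\delta)t}\bar v(W_f^*(t),X(t))=\frac{f_\infty^{\gamma}}{1-\gamma}e^{-(\rho+\delta)t}(\Gamma_\infty^*(t))^{1-\gamma}$ together with the HJB equation exactly as in Proposition \ref{pr:fundid}; there the needed transversality $\E[e^{-(\rho+\delta)T}\bar v(W_f^*(T),X(T))]\to 0$ is just the identity $\rho+\delta-\lambda=\nu^{-1}$ in disguise, but the direct computation is shorter and self-contained.
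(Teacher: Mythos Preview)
Your proof is correct and follows exactly the approach indicated in the paper: admissibility is deduced from Lemma~\ref{PROP_DYNAMIC_H_INF_RET} (the same argument as in Theorem~\ref{th:VERIFICATION_THEOREM_INF_RET}), and the identity $J(w,x;\bar\pi_f)=\bar v(w,x)$ is obtained by direct computation exploiting that $\Gamma_\infty^*$ is a stochastic exponential. The paper's own proof merely states that this follows ``by direct computation, using \eqref{EQ_FEEDBACK_STRATEGIES_INF_RET} and the fact that $\Gamma_\infty^*(\cdot)$ is a stochastic exponential,'' so you have faithfully supplied the details (the log-normal moment formula and the algebraic identity $\rho+\delta-\lambda=\nu^{-1}$) that the paper leaves to the reader.
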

\begin{proof}
The admissibility follows from the proof of Theorem \ref{th:VERIFICATION_THEOREM_INF_RET} since the constraint to be satisfied is the same as in the case $\gamma \in (0,1)$.
The validity of \eqref{eq:J=barv} is achieved by direct computation, using \eqref{EQ_FEEDBACK_STRATEGIES_INF_RET} and the fact that $\Gamma_\infty^*(\cdot)$
is a stochastic exponential as from Lemma \ref{PROP_DYNAMIC_H_INF_RET}.
\end{proof}

The above Proposition  \ref{pr:Vfinitegamma>1} implies that
$$
\bar v\big(w,x\big)= J(w,x;\bar\pi_f)\le V(w,x)\le 0 {.}
$$
We now want to prove that
$$
\bar v\big(w,x\big)\ge V(w,x).
$$
To do this we look closely at the value function.
%

\begin{proposition}[Homogeneity of $V$]
\label{pr:Vhom}
The value function in $\calh_{++}$ satisfies the following
\begin{equation}\label{eq:Vpower}
V(w,x)= \eta \frac{\Gamma_\infty(w,x)^{1-\gamma}}{1-\gamma}, \qquad
\hbox{for some $\eta \ge 0$.}
\end{equation}
\end{proposition}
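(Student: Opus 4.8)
The plan is to derive the power form from a scaling argument, the key preliminary observation being that, on $\calh_{++}$, the problem's data depend on the initial datum $(w,x)$ only through the scalar total wealth $\Gamma_\infty(w,x)$; this rests on Lemma~\ref{lm:Gammabar}.

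\emph{Step 1 (reduction to total wealth).} Fix $(w,x)\in\calh_{++}$ and an admissible $\pi=(c,B,\theta)\in\Pi(w,x)$. Introduce the process $\psi(t):=\theta^\top(t)\sigma+g_{\infty}X^x_0(t)\sigma_y^\top$; since $\sigma$ is invertible and $X^x$ is control–free, $\theta$ is recovered from $\psi$ by $\theta^\top=(\psi-g_{\infty}X^x_0\sigma_y^\top)\sigma^{-1}$, a bijection preserving $\mathbb F$–predictability. By Lemma~\ref{lm:Gammabar}-(i) the process $\bar\Gamma_\infty(t)=\Gamma_\infty\big(W^{w,x}(t;c,B,\theta),X^x(t)\big)$ solves the one–dimensional linear SDE
\[
d\bar\Gamma_\infty(t)=\bigl[(r+\delta)\bar\Gamma_\infty(t)-c(t)-\delta B(t)+\psi(t)\kappa\bigr]dt+\psi(t)\,dZ_t,\qquad \bar\Gamma_\infty(0)=\Gamma_\infty(w,x),
\]
whose coefficients no longer involve $x$ except through the free parameter $\psi$; moreover $J(w,x;\pi)$ depends only on $(c,B)$, and admissibility is exactly $\bar\Gamma_\infty(\cdot)\ge0$. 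Thus the controlled SDE, the running reward, and the constraint defining the optimisation at $(w,x)$ depend on $x$ only through $\Gamma_\infty(w,x)$.

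\emph{Step 2 (level-set invariance and homogeneity).} Let $(w,x),(w',x')\in\calh_{++}$ with $\Gamma_\infty(w,x)=\lambda\,\Gamma_\infty(w',x')$ for some $\lambda>0$. Given an admissible $(c,B,\theta)$ at $(w',x')$, with $\psi(t)=\theta^\top(t)\sigma+g_\infty X^{x'}_0(t)\sigma_y^\top$, define at $(w,x)$ the controls $c_\lambda:=\lambda c$, $B_\lambda:=\lambda B$, $\theta_\lambda^\top:=(\lambda\psi-g_\infty X^{x}_0\sigma_y^\top)\sigma^{-1}$; then $\theta_\lambda^\top\sigma+g_\infty X^x_0\sigma_y^\top=\lambda\psi$, so by Lemma~\ref{lm:Gammabar}-(i) and pathwise uniqueness for the linear SDE above, $\Gamma_\infty\big(W^{w,x}(\cdot;c_\lambda,B_\lambda,\theta_\lambda),X^x(\cdot)\big)=\lambda\bar\Gamma_\infty(\cdot)\ge0$, so (granting the $L^2$ membership of $\theta_\lambda$) $(c_\lambda,B_\lambda,\theta_\lambda)\in\Pi(w,x)$, and $J(w,x;c_\lambda,B_\lambda,\theta_\lambda)=\lambda^{1-\gamma}J(w',x';c,B,\theta)$. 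Taking suprema, $V(w,x)\ge\lambda^{1-\gamma}V(w',x')$; exchanging the two points and replacing $\lambda$ by $1/\lambda$ yields the reverse inequality, hence $V(w,x)=\lambda^{1-\gamma}V(w',x')$.

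\emph{Step 3 (conclusion and sign).} Fix $(\hat w,\hat x)\in\calh_{++}$ with $\Gamma_\infty(\hat w,\hat x)=1$. Applying Step~2 with $(w',x')=(\hat w,\hat x)$ and $\lambda=\Gamma_\infty(w,x)$ gives $V(w,x)=\Gamma_\infty(w,x)^{1-\gamma}V(\hat w,\hat x)$ for every $(w,x)\in\calh_{++}$; setting $\eta:=(1-\gamma)V(\hat w,\hat x)$ yields \eqref{eq:Vpower}. Since $\gamma>1$ the running utility is nonpositive, so $V\le0$ on $\calh_{++}$, while $\Gamma_\infty^{1-\gamma}/(1-\gamma)<0$ there; hence $\eta\ge0$ (and $\eta<+\infty$ by Proposition~\ref{pr:Vfinitegamma>1}, since $V\ge\bar v>-\infty$). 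The only genuinely technical point — the main obstacle — is the admissibility bookkeeping: one must check that the reparametrised volatility control $\theta_\lambda$ stays $\mathbb F$–predictable and in the prescribed $L^2$ class. The correction $\theta_\lambda-\lambda\theta$ is a multiple of $\lambda X^{x'}_0-X^{x}_0$, which solves the (linear, control–free) equation for $X_0$; in the purely radial case $(w',x')=(w/\lambda,x/\lambda)$ the identity $X^{x/\lambda}_0=\lambda^{-1}X^{x}_0$ makes $\theta_\lambda=\lambda\theta$ and removes the difficulty, and the general case reduces to the integrability of solutions of the state equation for $X_0$.
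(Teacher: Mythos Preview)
Your argument is correct and follows essentially the same route as the paper: the paper also reduces to total wealth via the $\psi$-reparametrisation of Lemma~\ref{lm:Gammabar}, first proving level-set constancy ($\lambda=1$) and then homogeneity via the radial scaling $(aw,ax)$, whereas you merge these into a single step with general $\lambda$. You are in fact slightly more careful than the paper in flagging the $L^2$ admissibility of the transformed $\theta$, which the paper leaves implicit.
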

\begin{proof}
We divide the proof in two steps.

{\em Step I. If, for $(w_1,x_1),(w_2,x_2)\in\calh_{++}$, we have $\Gamma_\infty(w_1,x_1)=\Gamma_\infty(w_2,x_2)$, then it must be
$$
V(w_1,x_1)=V(w_2,x_2).
$$}

Indeed let $\pi_1:=(c_1,B_1,\theta_1) \in \Pi(w_1,x_1)$.
Consider the strategy $\pi_2:=(c_2,B_2,\theta_2)$
where $c_2=c_1$, $B_2=B_1$, and
$$
\theta_2(t)^\top \sigma+g_\infty X_0^{x_2}(t)\sigma_y^\top
=
\theta_1(t)^\top \sigma+g_\infty X_0^{x_1}(t)\sigma_y^\top {.}
$$
With this choice of $\pi_1,\pi_2$ it is clear that
$$
\Gamma_\infty(W^{w_1,x_1}(t;\pi_1),X^{x_1}(t))
=
\Gamma_\infty(W^{w_2,x_2}(t;\pi_2),X^{x_2}(t)), \quad \hbox{$t$ a.e. and $\P$-a.s.}
$$
since both processes satisfy equation \eqref{eq:GammaProcessSDE}
with the same initial condition.
Hence we also have $\pi_2\in \Pi(w_2,x_2)$.
Moreover we clearly have
$$
J(w_1,x_1;\pi_1)=
J(w_2,x_2;\pi_2){.}
$$
Since this construction can be done for every
$\pi_1\in \Pi(w_1,x_1)$ we immediately get
$V(w_1,x_1)\le V(w_2,x_2)$. The same argument also applies to show the opposite inequality, hence the claim follows.

{\em Step II. Homogeneity.}
Since, by the previous Proposition \ref{pr:Vfinitegamma>1},
$0\ge V>-\infty$ then we must have, for some $f:\R_+\to \R_-$,
\begin{equation}\label{eq:fnew}
V(w,x)=f(\Gamma_\infty(w,x)), \qquad \forall (w,x)\in \calh_{++}{.}
\end{equation}
Now, from the homogeneity of the
objective functional $J$ and from the linearity of the state equation
and of the constraints we get that $f$ must be $(1-\gamma)$-homogeneous.
Indeed, let $(w,x)\in \calh_{++}$ and $\pi\in \Pi(w,x)$. For $a>0$ we have, by linearity,
$$
W^{aw,ax}(t;a\pi)=aW^{w,x}(t;\pi), \quad and \quad
X^{ax}(t)=aX^x(t).
$$
Hence, by linearity of $\Gamma_\infty$, it must be
$a \pi \in \Pi(aw,ax)$, so $a\Pi(w,x)\subseteq \Pi(aw,ax)$. With the same argument
we can prove that also $a\Pi(w,x)\supseteq \Pi(aw,ax)$, hence the two sets are equal. We then have
$$
V(aw,ax)=\sup_{\pi\in \Pi(w,x)} J((aw,ax);a\pi)
=
a^{1-\gamma}\sup_{\pi\in \Pi(w,x)} J((w,x);\pi)
=
a^{1-\gamma}V(w,x){.}
$$
From the above we immediately get that the function $f$ in \eqref{eq:fnew}
is $(1-\gamma)$-homogeneous, hence a power. Since $V\le 0$
we must have $\eta \ge 0$ in \eqref{eq:Vpower}. The claim is proved.
\end{proof}


\begin{proposition}[Dynamic Programming Principle]
\label{PROP_DYN_PROG_INF_RET}
For any stopping time $\tau $ with respect to $\mathbb F$,
the value function $V$ satisfies the dynamic programming principle
\begin{equation}\label{DYNAMIC_PROGRAMMING_EQUATION_INF_RET}
\begin{split}
 V\left(w,x\right) =  \sup_{\left(c,B,\theta\right) \in   \Pi\left(w,x\right) }  \mathbb E \Bigg\{\int_{0}^{\tau} & e^{-(\rho+ \delta) s }
\left( \frac{c(s)^{1-\gamma}}{1-\gamma}
+ \delta \frac{\big(k B(s)\big)^{1-\gamma}}{1-\gamma}\right) ds\\
&+  e^{-(\rho+ \delta) \tau }V\left(W^{w,x}(\tau;c,B, \theta),X^{x}(\tau)\right)
\Bigg\}.
\end{split}
\end{equation}
\end{proposition}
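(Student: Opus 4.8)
The plan is to prove the two inequalities ``$\le$'' and ``$\ge$'' in \eqref{DYNAMIC_PROGRAMMING_EQUATION_INF_RET}, first for bounded stopping times and then for general $\tau$ by taking $\tau\wedge n\uparrow\tau$ and passing to the limit with the transversality estimate of Lemma~\ref{LEMMA_LIMIT_AT_INFTY_GUESS_VALUE_FUNCTION_INFINITE_RETIREMENT} (using $V\le\bar v$ if $\gamma\in(0,1)$ and $\bar v\le V\le 0$ if $\gamma>1$). The structural fact on which the DPP ultimately rests is the stability of the admissible family $\Pi(\cdot)$ under conditioning and concatenation, which here is transparent thanks to Lemma~\ref{lm:Gammabar}: along every admissible path the relevant dynamics are governed by the scalar process $\bar\Gamma_\infty$, which satisfies the one-dimensional SDE \eqref{eq:GammaProcessSDE} whose coefficients involve the control only through $(c,B)$ and the combined volatility $\vartheta(s):=\theta^\top(s)\sigma+g_\infty X_0(s)\sigma_y^\top$ (which may be prescribed freely since $\sigma$ is invertible), while the objective depends only on $(c,B)$ and the constraint only on $\{\bar\Gamma_\infty\ge0\}$.

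For ``$\le$'', fix $\pi=(c,B,\theta)\in\Pi(w,x)$ and split $J(w,x;\pi)=\mathbb E[\int_0^\tau e^{-(\rho+\delta)s}U(c(s),B(s))\,ds]+\mathbb E[\int_\tau^{+\infty} e^{-(\rho+\delta)s}U(c(s),B(s))\,ds]$, where $U(c,B):=\tfrac{c^{1-\gamma}}{1-\gamma}+\delta\tfrac{(kB)^{1-\gamma}}{1-\gamma}$ is the running utility of \eqref{DEF_J_INF_RET}. Using the flow property of the mild solution $X$ in $M_2$ and of the SDE for $W$ — so that $(W_\pi(\tau+u),X^x(\tau+u))=\big(W^{\,W_\pi(\tau),X^x(\tau)}(u;\pi^\tau),X^{X^x(\tau)}(u)\big)$ with $\pi^\tau(u):=\pi(\tau+u)$ driven by the shifted Brownian motion $\tilde Z(u):=Z(\tau+u)-Z(\tau)$ — and a regular-conditional-probability argument, the $\mathcal F_\tau$-conditional expectation of the second integral equals, a.s., $e^{-(\rho+\delta)\tau}$ times the value at $(W_\pi(\tau),X^x(\tau))$ of $\pi^\tau$ instantiated on $\tilde Z$; since the same flow property turns the constraint on $[\tau,+\infty)$ into $\bar\Gamma_\infty\ge0$ for the shifted path, $\pi^\tau$ is admissible there, so this value is $\le V(W_\pi(\tau),X^x(\tau))$. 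Substituting back and taking the supremum over $\pi$ gives ``$\le$''. (When $J(w,x;\pi)=-\infty$, which can occur only for $\gamma>1$, the inequality is vacuous; otherwise every term above is well defined since $U\le0$ in that case.)

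For ``$\ge$'', fix $\pi=(c,B,\theta)\in\Pi(w,x)$, $\varepsilon>0$, and the reference point $\hat e:=(1,0)\in\calh_{++}$ (so $\Gamma_\infty(\hat e)=1$), and choose $\hat\pi^\varepsilon\in\Pi(\hat e)$ with $J(\hat e;\hat\pi^\varepsilon)\ge V(\hat e)-\varepsilon$ (possible since $V(\hat e)>-\infty$ on $\calh_{++}$). Set $\Gamma_\tau:=\Gamma_\infty(W_\pi(\tau),X^x(\tau))\ge0$. On $\{\Gamma_\tau=0\}$ the state lies on $\partial\calh_+$ and, by Lemma~\ref{lm:Gammabar}-(ii), the only admissible continuation is the null one, in agreement with $V=0$ if $\gamma\in(0,1)$ and $V=-\infty$ if $\gamma>1$ there. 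On $\{\Gamma_\tau>0\}$ we run after $\tau$, driven by $\tilde Z$, the control obtained by scaling $\hat\pi^\varepsilon$ by the $\mathcal F_\tau$-measurable scalar $\Gamma_\tau$ and then transferring it from the datum $\Gamma_\tau\hat e$ to the actual datum $(W_\pi(\tau),X^x(\tau))$ exactly as in Step~I of the proof of Proposition~\ref{pr:Vhom} (keep the $c$- and $B$-components, adjust the $\theta$-component so that $\vartheta$ is unchanged); concatenating with $\pi$ on $[0,\tau)$ produces a strategy $\tilde\pi\in\Pi(w,x)$, since its $\bar\Gamma_\infty$-process coincides with that of $\pi$ on $[0,\tau]$ and with the (nonnegative) scaled-and-transferred one afterwards, while its $c,B$ components remain nonnegative. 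By the $(1-\gamma)$-homogeneity of $U$ and the identity $V(\cdot)=\Gamma_\infty(\cdot)^{1-\gamma}V(\hat e)$ (Step~I together with Proposition~\ref{pr:Vhom}), the $\mathcal F_\tau$-conditional continuation value equals $\Gamma_\tau^{1-\gamma}J(\hat e;\hat\pi^\varepsilon)\ge\Gamma_\tau^{1-\gamma}\big(V(\hat e)-\varepsilon\big)=V(W_\pi(\tau),X^x(\tau))-\varepsilon\,\Gamma_\tau^{1-\gamma}$, so that
\begin{align*}
V(w,x)\ \ge\ J(w,x;\tilde\pi)
&\ \ge\ \mathbb E\Big[\int_0^\tau e^{-(\rho+\delta)s}U\big(c(s),B(s)\big)\,ds+e^{-(\rho+\delta)\tau}V\big(W_\pi(\tau),X^x(\tau)\big)\Big]\\
&\qquad -\varepsilon\,\mathbb E\big[e^{-(\rho+\delta)\tau}\Gamma_\tau^{1-\gamma}\big].
\end{align*}
Taking the supremum over $\pi$ and then $\varepsilon\to0$ yields ``$\ge$'' (when the last expectation is $+\infty$, which for $\gamma>1$ forces the bracket to be $-\infty$ since $\eta>0$ by Proposition~\ref{pr:Vfinitegamma>1}, the inequality is again vacuous).

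The main obstacle is the measurable-dependence bookkeeping: making rigorous the shifted flow/Markov identities and, especially, the passage from $\mathcal F_\tau$-conditional expectations to the value of the continuation problem (a regular-conditional-probability argument, routine but delicate in the infinite-dimensional state space $M_2$), together with the integrability of the error term $\mathbb E[e^{-(\rho+\delta)\tau}\Gamma_\tau^{1-\gamma}]$ near $\partial\calh_+$ (harmless in all cases, as noted). It is precisely the homogeneity of $V$ (Proposition~\ref{pr:Vhom}) combined with the scalar reduction of Lemma~\ref{lm:Gammabar} that makes the ``$\ge$'' direction work: no measurable selection over $\calh_{++}$ is ever needed, only a single template control rescaled by a scalar $\mathcal F_\tau$-measurable random variable. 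Alternatively, one may invoke a general dynamic-programming theorem in the spirit of \cite{FABBRI_GOZZI_SWIECH_BOOK} once the stability of $\Pi(\cdot)$ under restriction and concatenation has been checked via \eqref{eq:GammaProcessSDE}.
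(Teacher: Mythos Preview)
Your approach is correct but genuinely different from the paper's. The paper does not prove the DPP from scratch: it simply cites Theorem~3.70 of \cite{FABBRI_GOZZI_SWIECH_BOOK} and explains in a few lines how to bridge the two discrepancies with the present setting (finite versus infinite horizon, and the lack of uniform boundedness of the running cost in the control), the latter being handled by replacing dominated convergence with monotone convergence thanks to the sign of $U$. Your argument, by contrast, is a direct, problem-specific two-inequality proof that exploits the scalar reduction of Lemma~\ref{lm:Gammabar} and the homogeneity of Proposition~\ref{pr:Vhom}. This buys you something the paper's citation does not: it makes transparent \emph{why} the DPP holds here without any measurable-selection machinery, since a single template control at $\hat e=(1,0)$ can be rescaled and transferred (via Step~I of Proposition~\ref{pr:Vhom}) to cover every post-$\tau$ state. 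The cost is that your proof is longer and still leaves the regular-conditional-probability bookkeeping as an acknowledged obstacle, whereas the paper offloads all of that to \cite{FABBRI_GOZZI_SWIECH_BOOK}.

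One small slip: in the last parenthetical of your ``$\ge$'' argument you justify $\eta>0$ by Proposition~\ref{pr:Vfinitegamma>1}, but that proposition only gives $V\ge\bar v$, which for $\gamma>1$ translates into $\eta\le f_\infty^\gamma$; strict positivity of $\eta$ is precisely the content of Proposition~\ref{th:Vnot0}, which in the paper is proved \emph{using} the DPP. This is not fatal---if $\eta=0$ then $V\equiv 0$ and both sides of \eqref{DYNAMIC_PROGRAMMING_EQUATION_INF_RET} are easily seen to vanish (the running utility is nonpositive and can be made arbitrarily close to zero)---but you should handle the $\eta=0$ case separately rather than invoke $\eta>0$ at this stage.
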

\begin{proof}
See Theorem 3.70 in \cite{FABBRI_GOZZI_SWIECH_BOOK}.
The only differences are that, in such theorem, one has:
\begin{itemize}
  \item the horizon is finite;
  \item the current cost is assumed to be state dependent and uniformly bounded in the control.
\end{itemize}
The first difference is easily overcome by standard shift arguments as is done, e.g. in Section 2.4 of \cite{FABBRI_GOZZI_SWIECH_BOOK}.
The second difference can be resolved observing that, in the proof of
Theorem 3.70 in \cite{FABBRI_GOZZI_SWIECH_BOOK}, such boundedness is used to apply dominated convergence inside the integral (see equation (3.166), p.242 of \cite{FABBRI_GOZZI_SWIECH_BOOK}. In our case, due to the specific form of the functional, we can apply monotone convergence to get the same result.
\end{proof}

\begin{proposition}
\label{th:VsolHJB}
The value function $V$ is a classical solution of the HJB equation \eqref{eq:HJB1}
in $\calh_{++}$.
\end{proposition}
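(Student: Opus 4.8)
The plan is to combine the closed form of $V$ with the dynamic programming principle: we are in the case $\gamma>1$, and Proposition~\ref{pr:Vhom} already gives $V$ explicitly, hence smooth, so the standard argument by which an optimal value function is a viscosity solution of its HJB equation produces here, directly, the \emph{classical} equation~\eqref{EQ_SOLUTION_INF_RET}.

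\emph{Regularity and the degenerate case.} By Propositions~\ref{pr:Vhom} and~\ref{pr:Vfinitegamma>1}, $V(w,x)=\eta\,\Gamma_\infty(w,x)^{1-\gamma}/(1-\gamma)$ on $\calh_{++}$ for some $\eta\in[0,+\infty)$, and $V$ is finite there. If $\eta=0$ then $V\equiv 0$, so $DV\equiv 0$, $D^2_{wx_0}V\equiv 0$ and $0\in\cald(A^*)$; since for $\gamma>1$ the supremum over $c,B\ge 0$ of $c^{1-\gamma}/(1-\gamma)+\delta(kB)^{1-\gamma}/(1-\gamma)$ equals $0$, inspection of~\eqref{DEF_HAMILTONIAN_INF_RET}--\eqref{DEF_H_cv_INF_RET} gives $\mathbb H(w,x,0,0)=0=(\rho+\delta)\cdot 0$, so $V$ is a classical solution (cf.\ Remark~\ref{REMzerosol}) and we are done. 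So assume $\eta>0$. Then $V$ has exactly the functional form of $\bar v$ with $f_\infty^\gamma$ replaced by $\eta$, hence — repeating the computations in the proof of Proposition~\ref{PROP_COMPARISON_FINITENESS_VAL_FUN_INF_RET} and using Lemma~\ref{LEMMA_DIFF_EQ_G_INFTY_H_INFTY} — $V$ is continuously Fr\'echet differentiable with continuous $(w,x_0)$-second derivatives, $V_x=\eta\,\Gamma_\infty^{-\gamma}(g_\infty,h_\infty)\in\cald(A^*)$ with $A^*V_x$ continuous, and $V_w>0$, $V_{ww}<0$ on $\calh_{++}$; thus (i)--(ii) of Definition~\ref{DEF_SUPERSOLUTION_INF_RET} hold and $\mathbb H$ evaluated along $V$ is the smooth expression~\eqref{EQ_EXPL_HAMILTONIAN_INF_RET}. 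It remains to prove~\eqref{EQ_SOLUTION_INF_RET}, which I would split into the two one-sided inequalities.

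\emph{The inequality $(\rho+\delta)V\ge\mathbb H(\cdot,DV,D^2_{wx_0}V)$.} Fix $(w,x)\in\calh_{++}$ and a control value $u_0=(c_0,B_0,\theta_0)\in U$. Since $(w,x)$ is interior, the constant control $u_0$ is admissible on a positive random interval and extends to a strategy in $\Pi(w,x)$ by switching, before the state can leave a small ball $B\subset\subset\calh_{++}$, to $\bar\pi_f$ (admissible from every point of $\calh_{++}$ by Proposition~\ref{pr:Vfinitegamma>1}). Writing $\sigma$ for the exit time of $(W^{u_0}(\cdot),X(\cdot))$ from $B$, the ``$\ge$'' part of Proposition~\ref{PROP_DYN_PROG_INF_RET} with stopping time $h\wedge\sigma$ gives $V(w,x)\ge\mathbb E\big[\int_0^{h\wedge\sigma}e^{-(\rho+\delta)s}U(c_0,B_0)\,ds+e^{-(\rho+\delta)(h\wedge\sigma)}V(W^{u_0}(h\wedge\sigma),X(h\wedge\sigma))\big]$. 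Applying It\^o's formula to $e^{-(\rho+\delta)s}V(W^{u_0}(s),X(s))$ on $[0,h\wedge\sigma]$ — in the version on the open set $\calh_{++}$ already used in the proof of Proposition~\ref{pr:fundid}, i.e.\ \cite[Proposition~1.164]{FABBRI_GOZZI_SWIECH_BOOK}, and with $\langle AX,V_x\rangle_{M_2}=\langle X,A^*V_x\rangle_{M_2}$ as in Remark~\ref{REMDEF_SUPERSOLUTION_INF_RET} — one rewrites the right-hand side and obtains $\mathbb E\int_0^{h\wedge\sigma}e^{-(\rho+\delta)s}\big[-(\rho+\delta)V+\mathcal L^{u_0}V+U(c_0,B_0)\big](W^{u_0}(s),X(s))\,ds\le 0$, where $\mathcal L^{u_0}$ is the generator of~\eqref{DYN_W_X_INFINITE_RETIREMENT_II}. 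Dividing by $h$ and letting $h\to 0$ (the integrand is bounded and continuous on $\bar B$ and $\sigma>0$ a.s.) gives $-(\rho+\delta)V(w,x)+\mathcal L^{u_0}V(w,x)+U(c_0,B_0)\le 0$; taking the supremum over $u_0\in U$ yields the inequality, since $\sup_{u_0}[\mathcal L^{u_0}V+U(c_0,B_0)]=\mathbb H(\cdot,DV,D^2_{wx_0}V)$.

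\emph{The reverse inequality and the main obstacle.} Suppose $(\rho+\delta)V(w_0,x_0)-\mathbb H(w_0,x_0,DV,D^2_{wx_0}V)=2\delta_0>0$ for some $(w_0,x_0)\in\calh_{++}$. By continuity there is $r>0$ with $\bar B_r(w_0,x_0)\subset\calh_{++}$ on which $\mathcal L^{u}V+U(c,B)\le(\rho+\delta)V-\delta_0$ for every control value $u=(c,B,\theta)$. Take an $h^2$-optimal strategy $\pi_h\in\Pi(w_0,x_0)$ over horizon $h$ from the ``$\le$'' part of Proposition~\ref{PROP_DYN_PROG_INF_RET}; a near-optimal strategy keeps the state in $\calh_{++}$, for otherwise Lemma~\ref{lm:Gammabar}(ii) would force $c=B=0$ afterwards and give $J=-\infty$. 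Replacing the terminal time $h$ by the exit time $\tau_r$ of $(W^{\pi_h}(\cdot),X(\cdot))$ from $B_r$ via the tower property, and then applying It\^o on $[0,\tau_r\wedge h]$ exactly as above, one gets $h^2\ge\delta_0\,e^{-(\rho+\delta)h}\,\mathbb E[\tau_r^{\pi_h}\wedge h]$, hence $h^{-1}\mathbb E[\tau_r^{\pi_h}\wedge h]\to 0$. The hard part is to turn this into a contradiction, i.e.\ to bound $\mathbb P(\tau_r^{\pi_h}\ge h)$ away from $0$ uniformly in $h$, despite the fact that $\pi_h$ — and in particular its $\theta$-component, which does not even enter the functional $J$ — is controlled only through the state constraint; this is handled by the standard moment and path-continuity estimates for the state equation, which by the change of variables $z=\Gamma_\infty$ (as in Step~I of the proof of Proposition~\ref{pr:Vhom}) and equation~\eqref{eq:GammaProcessSDE} reduce to uniform estimates for a controlled one-dimensional diffusion started at $\Gamma_\infty(w_0,x_0)>0$. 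Together with the first inequality this establishes~\eqref{EQ_SOLUTION_INF_RET} throughout $\calh_{++}$, so that $V$ is a classical solution of~\eqref{eq:HJB1} there.
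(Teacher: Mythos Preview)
Your approach is exactly the paper's: the paper's proof is the single sentence ``the proofs use exactly the same arguments of the proof of Theorem~2.41 in \cite{FABBRI_GOZZI_SWIECH_BOOK}'', and what you have written is a detailed unpacking of that argument --- smoothness of $V$ from Proposition~\ref{pr:Vhom}, then the two DPP/It\^o inequalities. The degenerate case $\eta=0$ and the supersolution half are handled correctly.

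The weak point is the subsolution half. The step you flag as ``the hard part'' is not resolved by what you write: the reduction to the one--dimensional SDE~\eqref{eq:GammaProcessSDE} via $z=\Gamma_\infty$ does \emph{not} bound the diffusion coefficient, since $\psi=\theta^\top\sigma+g_\infty X_0\sigma_y^\top$ is as unbounded as $\theta$, and ``standard moment and path-continuity estimates'' need bounded (or at least a~priori controlled) coefficients. So the assertion that $\mathbb P(\tau_r^{\pi_h}\ge h)$ stays bounded away from $0$ is unsupported as stated. What is actually needed is the strict concavity of $\mathbb H_{cv}$ in the controls: on the closure of your ball one has $V_{ww}\le -m<0$ and hence
\[
(\rho+\delta)V-\mathcal L^{u}V-U(c,B)\;\ge\;\delta_0+\tfrac{m}{2}\,|\sigma^\top(\theta-\theta^*)|^2+\big[\text{nonnegative terms in }c,B\big],
\]
so the same It\^o computation that gives $h^{-1}\mathbb E[\tau_r^{\pi_h}\wedge h]\to 0$ simultaneously forces $\mathbb E\!\int_0^{\tau_r^{\pi_h}\wedge h}|\theta_h|^2ds\to 0$ (and, via the $c,B$ terms, $\mathbb E\!\int_0^{\tau_r^{\pi_h}\wedge h}(c_h+\delta B_h)\,ds\to 0$). \emph{These} bounds, fed into a BDG estimate for $\bar\Gamma_\infty(\cdot)-\Gamma_\infty(w_0,x_0)$, are what yield $\mathbb P(\tau_r^{\pi_h}<h)\to 0$ and hence the contradiction. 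The one--dimensional change of variable is convenient but not the crux; the coercivity in $\theta$ is. This is precisely the mechanism behind the result cited from \cite{FABBRI_GOZZI_SWIECH_BOOK}, and it is the missing idea in your sketch.
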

\begin{proof}
The proofs uses exactly the same arguments
of the proof of {Theorem 2.41} in \cite{FABBRI_GOZZI_SWIECH_BOOK}.
\end{proof}

Now we substitute the explicit expression \eqref{eq:Vpower} into the HJB equation
\eqref{eq:HJB1}. It is immediate to get that we have equality only in two cases:
either when $V\equiv\bar v$ or when $V\equiv 0$.
We now show that we can exclude the second possibility.
First we give a simple lemma.

\begin{lemma}\label{lm:tauinf}
Let $\gamma>1$, let $(w,x)\in \calh_{++}$ and let
$\pi:=(c, B,\theta)\in \Pi(w,x)$ be such that $J(w,x;\pi) > - \infty$.
Let $\bar\Gamma_\infty(t):=\Gamma_\infty(W_\pi(t),X(t))$.
Let $\tau$ be the first exit time of the process $\bar\Gamma_\infty(\cdot)$
from the open set $\calh_{++}$.
Then it must be $\P(\tau= +\infty)=1$.
\end{lemma}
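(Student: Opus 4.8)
The plan is to argue by contradiction: I would assume that $\P(\tau<+\infty)>0$ and show that this forces $J(w,x;\pi)=-\infty$, contradicting the standing hypothesis $J(w,x;\pi)>-\infty$. The whole argument rests on the combination of two facts: when $\gamma>1$ the running utility is nonpositive and equals $-\infty$ as soon as either $c$ or $B$ vanishes, and Lemma~\ref{lm:Gammabar}-(ii) says that exactly this vanishing must occur once the total wealth process hits the boundary.

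First I would record the elementary observation that, since $\gamma>1$ (so $1-\gamma<0$), the integrand
$$
U(c,B)=\frac{c^{1-\gamma}}{1-\gamma}+\delta\,\frac{(kB)^{1-\gamma}}{1-\gamma}
$$
satisfies $U(c,B)\le 0$ for all $(c,B)\in\R_+\times\R_+$, with $U(c,B)=-\infty$ whenever $c=0$ or $B=0$ (because $0^{1-\gamma}=+\infty$ and $1-\gamma<0$); in particular $U(0,0)=-\infty$, and $J(w,x;\pi)$ is a well-defined element of $[-\infty,0]$. Next I would invoke Lemma~\ref{lm:Gammabar}-(ii): since $(w,x)\in\calh_{++}$ we have $\Gamma_\infty(w,x)>0$, and $\tau$ — the first hitting time of $0$ for the continuous process $\bar\Gamma_\infty$ — coincides with the first exit time of $\bigl(W_\pi(\cdot),X(\cdot)\bigr)$ from $\calh_{++}=\{\Gamma_\infty>0\}$. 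Hence $\1_{\{\tau<s\}}(\omega)\,c(s,\omega)=0$ and $\1_{\{\tau<s\}}(\omega)\,B(s,\omega)=0$ for $ds\otimes\P$-a.e.\ $(s,\omega)$, and by Fubini, for $\P$-a.e.\ $\omega\in\{\tau<+\infty\}$ one has $c(s,\omega)=B(s,\omega)=0$ for a.e.\ $s\in(\tau(\omega),+\infty)$, an interval of infinite length.

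Then, for $\P$-a.e.\ $\omega\in\{\tau<+\infty\}$,
$$
\int_0^{+\infty} e^{-(\rho+\delta)s}\,U\bigl(c(s,\omega),B(s,\omega)\bigr)\,ds
\le \int_{\tau(\omega)}^{+\infty} e^{-(\rho+\delta)s}\,U(0,0)\,ds=-\infty,
$$
using $U\le 0$ on $[0,\tau(\omega)]$ and $U=-\infty$ a.e.\ on $(\tau(\omega),+\infty)$. Taking expectations and using $\P(\tau<+\infty)>0$ together with the global bound $U\le 0$ then yields $J(w,x;\pi)=-\infty$, the desired contradiction; therefore $\P(\tau=+\infty)=1$. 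The only point requiring care — and thus the main (minor) obstacle — is the correct identification of $\tau$ as the exit time to which Lemma~\ref{lm:Gammabar}-(ii) applies, so that its ``zero-strategy'' conclusion is available after $\tau$; the rest is bookkeeping with the $\pm\infty$ conventions, and this is precisely where the argument diverges from the $\gamma\in(0,1)$ case, in which the zero strategy carries utility $0$ rather than $-\infty$.
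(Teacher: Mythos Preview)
Your proof is correct and follows essentially the same contradiction argument as the paper: assume $\P(\tau<+\infty)>0$, invoke Lemma~\ref{lm:Gammabar}-(ii) to force $c=B=0$ after $\tau$, and conclude that the $\gamma>1$ utility integral is $-\infty$ on a set of positive probability. The paper's version is slightly terser (it passes through an auxiliary deterministic time $T_1$ with $\P(\tau\le T_1)>0$ rather than working directly on $\{\tau<+\infty\}$), but your more careful handling of the $ds\otimes\P$-a.e.\ statement via Fubini is a welcome clarification of the same idea.
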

\begin{proof}
Assume by contradiction that $\P(\tau= +\infty)<1$.
Then, for some $T_1>0$
we would have $\P(\tau \le T_1)>0$.
By Lemma \ref{lm:Gammabar}-(ii),
on the set $\{\tau\le T_1\}$ we have
$$
\int_{T_1}^{+\infty}
 e^{-(\rho + \delta) s }   \left( \frac{ c(s)^{1-\gamma}}{1-\gamma}
+ \delta \frac{\big(k B(s)\big)^{1-\gamma}}{1-\gamma}\right)ds =-\infty {.}
$$
Hence also $J(w,x;\pi) = - \infty$, a contradiction.
\end{proof}

\begin{proposition}
\label{th:Vnot0}
The value function $V$ is not always $0$ in $\calh_{++}$.
\end{proposition}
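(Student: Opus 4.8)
The plan is to exclude the possibility $V\equiv 0$ by establishing the strict inequality $V(w,x)<0$ at every $(w,x)\in\calh_{++}$. This is exactly what is needed: as noted after Proposition~\ref{th:VsolHJB}, substituting the homogeneous form of $V$ into the HJB equation forces $V$ to coincide on $\calh_{++}$ either with $\bar v$ or with the function identically $0$; and $\bar v(w,x)=\frac{f_\infty^{\gamma}\Gamma_\infty(w,x)^{1-\gamma}}{1-\gamma}<0$ on $\calh_{++}$, since $\gamma>1$ and $f_\infty>0$. By Proposition~\ref{pr:Vhom} it would even suffice to check $V<0$ at one point, but the bound below is uniform.

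The main tool is the static budget inequality
\begin{equation*}
\E\int_0^{+\infty}\xi(s)\big(c(s)+\delta B(s)\big)\,ds\;\le\;\Gamma_\infty(w,x),
\qquad\text{for every }(w,x)\in\calh_{++},\ \pi=(c,B,\theta)\in\Pi(w,x).
\end{equation*}
I would deduce this from Lemma~\ref{lm:Gammabar}: its proof shows that, for each $T>0$, the discounted total wealth $\Gamma_\infty^0(t)=e^{-(r+\delta)t}\bar\Gamma_\infty(t)$ is a nonnegative supermartingale under $\tilde\P_T$ on $[0,T]$ with $\Gamma_\infty^0(0)=\Gamma_\infty(w,x)$, whence, by \eqref{eq:Gamma0tau}, $\tilde\E_T\!\int_0^T e^{-(r+\delta)s}(c+\delta B)\,ds\le\Gamma_\infty(w,x)$; changing measure back to $\P$ and using $e^{-(r+\delta)s}\,\tfrac{d\tilde\P_s}{d\P}=\xi(s)$ (cf.\ \eqref{DYN_STATE_PRICE_DENSITY}) together with the tower property turns the left side into $\E\!\int_0^T \xi(s)(c+\delta B)\,ds$, and monotone convergence in $T$ gives the claim. (Equivalently, this is the classical deflator argument applied to $\xi W_\pi+\int_0^\cdot\xi(c+\delta B-X_0)\,ds$ using \eqref{DYN_W_X_INFINITE_RETIREMENT_II}, the state constraint, and Proposition~\ref{PROP_CONSTRAINT_INTERPRETATION_INF_RET}.)

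Next I would combine this with convex duality applied to the running utility. Writing $\psi(z)=\frac{z^{1-\gamma}}{1-\gamma}$ (concave, increasing, strictly negative on $(0,+\infty)$ because $\gamma>1$), its concave conjugate is $\psi^*(y)=\inf_{z>0}\{yz-\psi(z)\}=\frac{\gamma}{\gamma-1}y^{(\gamma-1)/\gamma}>0$ for $y>0$, so $\psi(z)\le yz-\psi^*(y)$ for all $z\ge 0$, $y>0$ (trivially so when $z=0$). Fixing $\lambda>0$ and applying this with $y=\lambda e^{(\rho+\delta)s}\xi(s)$ to $z=c(s)$ and with $y=\lambda e^{(\rho+\delta)s}\xi(s)/k$ to $z=kB(s)$, then multiplying by $e^{-(\rho+\delta)s}$ (resp.\ $\delta e^{-(\rho+\delta)s}$), summing, integrating over $[0,+\infty)$ and taking expectations (Tonelli on the sign-definite pieces, with $1-\tfrac{\gamma-1}{\gamma}=\tfrac1\gamma$ and $b=1-\tfrac1\gamma$), one obtains
\begin{equation*}
J(w,x;\pi)\;\le\;\lambda\,\E\!\int_0^{+\infty}\!\xi(s)\big(c(s)+\delta B(s)\big)\,ds\;-\;\frac{\gamma}{\gamma-1}\,\big(1+\delta k^{-b}\big)\,\lambda^{b}\,\E\!\int_0^{+\infty}\! e^{-(\rho+\delta)s/\gamma}\,\xi(s)^{b}\,ds.
\end{equation*}
An elementary Gaussian computation from \eqref{DYN_STATE_PRICE_DENSITY} gives $\E[\xi(s)^{b}]=\exp\!\big(-b\,(r+\delta+\tfrac{\kappa^\top\kappa}{2\gamma})\,s\big)$, so $\E\!\int_0^{+\infty}e^{-(\rho+\delta)s/\gamma}\xi(s)^{b}\,ds=\big(\tfrac{\rho+\delta}{\gamma}+b(r+\delta+\tfrac{\kappa^\top\kappa}{2\gamma})\big)^{-1}=\nu$, the denominator being exactly $\nu^{-1}>0$, finite by Hypothesis~\ref{HYP_BETA-BETA_INFTY}(ii) (automatic when $\gamma>1$, cf.\ Remark~\ref{rm:hp2}). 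Using the budget inequality and $f_\infty=(1+\delta k^{-b})\nu$, this becomes
\begin{equation*}
J(w,x;\pi)\;\le\;\lambda\,\Gamma_\infty(w,x)-\frac{\gamma}{\gamma-1}\,f_\infty\,\lambda^{b},
\qquad\text{for every }\pi\in\Pi(w,x)\text{ and every }\lambda>0.
\end{equation*}

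Finally, since $b=\frac{\gamma-1}{\gamma}\in(0,1)$ the right-hand side is strictly negative for $\lambda>0$ small enough (the term $-\tfrac{\gamma}{\gamma-1}f_\infty\lambda^{b}$ dominates $\lambda\Gamma_\infty(w,x)$ as $\lambda\to0^+$); taking the supremum over $\pi$ and minimizing over $\lambda$ yields $V(w,x)\le\bar v(w,x)<0$ on $\calh_{++}$. Hence $V$ is not identically $0$ there, which is the claim — and, combined with Proposition~\ref{pr:Vfinitegamma>1}, it in fact pins down $V\equiv\bar v$. The step I expect to be the main obstacle is the rigorous justification of the budget inequality (the supermartingale/change-of-measure argument and the control of the boundary term as time goes to $+\infty$); once it is in hand, the remainder is routine convex-analytic bookkeeping and the elementary log-normal integral.
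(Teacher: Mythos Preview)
Your argument is correct but follows a genuinely different route from the paper. The paper argues by contradiction: assuming $V\equiv 0$ on $\calh_{++}$, it invokes the dynamic programming principle (Proposition~\ref{PROP_DYN_PROG_INF_RET}) with a fixed horizon $T$ and initial data $(w_n,x_n)$ with $\Gamma_\infty(w_n,x_n)<1/n$, extracts near-optimal controls $(c_n,B_n,\theta_n)$ whose finite-horizon utility exceeds $-1/n$, and then uses the very same budget estimate $\tilde\E_T\int_0^T e^{-(r+\delta)s}(c_n+\delta B_n)\,ds\le\Gamma_\infty(w_n,x_n)<1/n$ that you derive, but only to force $c_n,B_n\to 0$ in $L^1$; along an a.e.\ convergent subsequence the integrand $c_{n_k}^{1-\gamma}/(1-\gamma)$ tends to $-\infty$, contradicting the $-1/n_k$ lower bound by Fatou. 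In contrast, you bypass the DPP and the fact that $V$ solves the HJB equation entirely: you establish the static budget inequality directly, couple it with the Fenchel inequality for $z\mapsto z^{1-\gamma}/(1-\gamma)$, compute the log-normal integral $\E\int_0^\infty e^{-(\rho+\delta)s/\gamma}\xi(s)^b\,ds=\nu$, and obtain the sharp bound $J(w,x;\pi)\le\lambda\Gamma_\infty(w,x)-\frac{\gamma}{\gamma-1}f_\infty\lambda^b$, which is strictly negative for small $\lambda$. Your route is more direct and in fact yields more than the proposition asks for: optimizing over $\lambda$ gives $V\le\bar v$ on $\calh_{++}$, which together with Proposition~\ref{pr:Vfinitegamma>1} already pins down $V=\bar v$ without appealing to Propositions~\ref{PROP_DYN_PROG_INF_RET} or~\ref{th:VsolHJB}. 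The paper's route, on the other hand, stays within the tools it has already developed (DPP, classical solution of HJB, the homogeneity lemma) and avoids introducing convex-duality machinery.
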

\begin{proof}
Assume by contradiction that
$V\equiv 0$ over $\calh_{++}$.
Then by dynamic programming principle \eqref{DYNAMIC_PROGRAMMING_EQUATION_INF_RET}
we get that, for every $(w,x) \in \calh_{++}$ and $t\ge 0$
\begin{equation}\label{DYNAMIC_PROGRAMMING_EQUATION_INF_RET0}
\begin{split}
0 =  \sup_{\left(c,B,\theta\right) \in   \Pi\left(w,x\right) }
\mathbb E \Bigg\{\int_{0}^{t} & e^{-(\rho+ \delta) s }
\left( \frac{c(s)^{1-\gamma}}{1-\gamma}
+ \delta \frac{\big(k B(s)\big)^{1-\gamma}}{1-\gamma}\right) ds
\Bigg\}.
\end{split}
\end{equation}
In particular we fix $T>0$ and we choose, for every $n \in \N$,
$(w_n,x_n)$ such that $\Gamma_\infty(w_n,x_n)< 1/n$,
$t_n=T$, and $\left(c_n,B_n,\theta_n\right)\in \Pi\left(w_n,x_n\right)$
such that
\begin{equation}\label{DYNAMIC_PROGRAMMING_EQUATION_INF_RET0new}
\begin{split}
-\frac 1n <
\mathbb E \Bigg\{\int_{0}^{T} & e^{-(\rho+ \delta) s }
\left( \frac{c_n(s)^{1-\gamma}}{1-\gamma}
+ \delta \frac{\big(k B_n(s)\big)^{1-\gamma}}{1-\gamma}\right) ds
\Bigg\}<0.
\end{split}
\end{equation}
Now we use equation \eqref{eq:Gamma0tau} for $t=T$,
$\left(c,B,\theta\right)=\left(c_n,B_n,\theta_n\right)$,
and take the expectation $\tilde\E_T$ under $\tilde\P_T$
(defined in \eqref{eq:Ptdef}).
In this way, using the previous lemma, the stochastic integral in \eqref{eq:Gamma0tau} disappears and we get
\begin{equation}\label{DYNAMIC_PROGRAMMING_EQUATION_INF_RET0newbis}
\begin{split}
 \tilde{\mathbb E}_T \Bigg\{\int_{0}^{T} & e^{-(r+ \delta) s }
\left( c_n(s)+ \delta B_n(s)\right) ds
\Bigg\}\le \Gamma_\infty(w_n,x_n)< 1/n.
\end{split}
\end{equation}
This implies that the sequences $c_n$ and $B_n$, since they are positive,
converge to $0$ in $L^1(\Omega \times [0,T],d\P\otimes dt)$.
Hence there exist subsequences $c_{n_k}$ and $B_{n_k}$
which converge a.e. to $0$ in $d\P\otimes dt$.
Since $\gamma>1$, this implies that the subsequences
$c_{n_k}^{1-\gamma}/(1-\gamma)$
and $B_{n_k}^{1-\gamma}/(1-\gamma)$
converge a.e. to $-\infty$ in $d\P\otimes dt$.
This contradicts \eqref{DYNAMIC_PROGRAMMING_EQUATION_INF_RET0}, so the claim follows.
\end{proof}

The next, final, theorem, is then a straightforward consequence of the results of the present subsection.

\begin{theorem}[Verification Theorem and Optimal feedback Map, $\gamma>1$] \label{th:VERIFICATIONgamma>1}
Let $\gamma>1$. We have $V=\bar v$ in $\calh_+$.
Moreover the function $\left(\textbf{C}_f, \textbf{B}_f, \Theta_f\right)$ defined in (\ref{EQ_DEF_FEEDBACK_MAP}) is an optimal feedback map.
Finally, for every $(w,x)\in \calh_{+}$ the strategy
$\bar\pi_f:=(\bar c_f,\bar B_f,\bar\theta_f)$ is the unique optimal strategy.
\end{theorem}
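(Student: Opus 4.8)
The plan is to split $\calh_+$ into its boundary $\partial\calh_+=\{\Gamma_\infty=0\}$ and the open part $\calh_{++}$, the latter carrying all the content. On $\partial\calh_+$ everything needed is already available: $V\equiv-\infty$ there by Lemma~\ref{lm:Gammabar}-(ii) and $\bar v\equiv-\infty$ there by Remark~\ref{rm:vonboundary}, so $V=\bar v$; moreover Lemma~\ref{lm:Gammabar}-(ii) shows that the only admissible strategy from such a point has $c\equiv 0$, $B\equiv 0$ and $\theta^{\top}\sigma+g_{\infty}X_0\sigma_y^{\top}\equiv 0$, which is exactly $\bar\pi_f$ (indeed, by Lemma~\ref{PROP_DYNAMIC_H_INF_RET}, $\Gamma_\infty^*$ is a stochastic exponential started at $0$, hence identically $0$, so $\bar c_f\equiv\bar B_f\equiv 0$ and $\bar\theta_f=-g_\infty X_0(\sigma^{\top})^{-1}\sigma_y$). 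Thus $\bar\pi_f$ is the unique admissible, a fortiori the unique optimal, strategy, it attains the value $-\infty$, and its closed-loop equation is linear, hence well posed.

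On $\calh_{++}$ I would first identify $V$. Proposition~\ref{pr:Vfinitegamma>1} gives $\bar v(w,x)=J(w,x;\bar\pi_f)\le V(w,x)\le 0$; Proposition~\ref{pr:Vhom} gives $V(w,x)=\eta\,\Gamma_\infty(w,x)^{1-\gamma}/(1-\gamma)$ for some $\eta\ge 0$; and Proposition~\ref{th:VsolHJB} says $V$ is a classical solution of~\eqref{eq:HJB1}. Substituting the power ansatz into~\eqref{eq:HJB1}: when $\eta>0$ one has $V_w>0$, $V_{ww}<0$, so the clean form~\eqref{HJB_CLEAN_INF_RET} applies and the computation of the proof of Proposition~\ref{PROP_COMPARISON_FINITENESS_VAL_FUN_INF_RET} carries over verbatim with $\eta$ in place of $f_\infty^{\gamma}$, forcing $\eta^{-1/\gamma}=f_\infty^{-1}$, i.e. $\eta=f_\infty^{\gamma}$ and $V=\bar v$; the remaining case $\eta=0$, i.e. $V\equiv 0$, also satisfies~\eqref{eq:HJB1} (for $\gamma>1$ the Hamiltonian at $p=0$, $P=0$ equals $0$), and is excluded by Proposition~\ref{th:Vnot0}. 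Hence $V=\bar v$ on $\calh_{++}$, and $\bar\pi_f$ is optimal because, by Proposition~\ref{pr:Vfinitegamma>1}, $J(w,x;\bar\pi_f)=\bar v(w,x)=V(w,x)$; its closed-loop equation is again linear (Lemma~\ref{PROP_DYNAMIC_H_INF_RET}), so it is an optimal feedback map.

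For uniqueness on $\calh_{++}$, let $\pi^{*}=(c,B,\theta)\in\Pi(w,x)$ be optimal. Then $J(w,x;\pi^{*})=V(w,x)=\bar v(w,x)>-\infty$, so by Lemma~\ref{lm:tauinf} the state never leaves $\calh_{++}$, i.e. $\tau=+\infty$ $\P$-a.s. One shows, as is standard in dynamic programming, that optimality of $\pi^{*}$ together with the dynamic programming principle (Proposition~\ref{PROP_DYN_PROG_INF_RET}) makes the value process $Y_t:=\int_0^t e^{-(\rho+\delta)s}\big(c(s)^{1-\gamma}/(1-\gamma)+\delta(kB(s))^{1-\gamma}/(1-\gamma)\big)\,ds+e^{-(\rho+\delta)t}\bar v(W_{\pi^{*}}(t),X(t))$ a martingale; in particular $\E[Y_{T\wedge\tau_N}]=\bar v(w,x)$ for every $T>0$, where $\tau_N=\inf\{t\ge 0:\bar\Gamma_\infty(t)\le 1/N\}\nearrow\tau=+\infty$. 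On the other hand, since $\bar v$ is smooth on $\{\bar\Gamma_\infty\ge 1/N\}$, applying the generalized It\^o formula exactly as in the proof of Proposition~\ref{pr:fundid} and using that $\bar v$ solves~\eqref{eq:HJB1} gives $\E[Y_{T\wedge\tau_N}]=\bar v(w,x)+\E\int_0^{T\wedge\tau_N}e^{-(\rho+\delta)s}\big(\mathbb H_{cv}-\mathbb H_{max}\big)(s)\,ds$. Comparing the two expressions, $\E\int_0^{T\wedge\tau_N}e^{-(\rho+\delta)s}(\mathbb H_{cv}-\mathbb H_{max})(s)\,ds=0$; since $\mathbb H_{cv}\le\mathbb H_{max}$ pointwise, the integrand vanishes $dt\otimes\P$-a.e. on $[0,T\wedge\tau_N]$, and letting $T,N\to+\infty$ yields $\mathbb H_{cv}=\mathbb H_{max}$ $dt\otimes\P$-a.e. on $[0,+\infty)$. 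Because $\bar v_w>0$ and $\bar v_{ww}<0$ on $\calh_{++}$, the maximiser in~\eqref{DEF_H_MAX_INF_RET} is unique and equals the feedback~\eqref{EQ_FEEDBACK_STRATEGIES_INF_RET}, so $\pi^{*}=\bar\pi_f$, $dt\otimes\P$-a.e. (there is no region $t>\tau$ to handle, since $\tau=+\infty$).

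The main obstacle is precisely this last step. For $\gamma\in(0,1)$ the transversality condition~\eqref{EQ_DECREASING_PROD_INF_RETlim} came for free from a Fatou argument on a nonnegative integrand, and that route is unavailable for $\gamma>1$, where the relevant process is bounded above rather than below and a non-positive local supermartingale need not be a true one. The detour above sidesteps this by extracting, from optimality and the dynamic programming principle, the equality $\E[Y_{T\wedge\tau_N}]=\bar v(w,x)$ at the (bounded) stopping times $T\wedge\tau_N$ — which is why Propositions~\ref{pr:Vhom}, \ref{PROP_DYN_PROG_INF_RET}, \ref{th:VsolHJB}, \ref{th:Vnot0} and Lemma~\ref{lm:tauinf} are set up first — so that only monotone convergence on the sign-definite integrand $\mathbb H_{cv}-\mathbb H_{max}$ is needed to pass to the limit. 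Verifying the martingale/dynamic-programming equality at these stopping times, and checking that the generalized It\^o formula of \cite{FABBRI_GOZZI_SWIECH_BOOK} applies on the stopped paths (as in Proposition~\ref{pr:fundid}), are the remaining technical points.
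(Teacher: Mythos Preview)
Your proposal is correct and follows exactly the route the paper has in mind; the paper's own proof is a single sentence (``a straightforward consequence of the results of the present subsection''), and you have correctly assembled Propositions~\ref{pr:Vfinitegamma>1}, \ref{pr:Vhom}, \ref{th:VsolHJB}, \ref{th:Vnot0}, Lemma~\ref{lm:tauinf}, and Lemma~\ref{lm:Gammabar}-(ii) to obtain $V=\bar v$ on $\calh_+$ and the optimality of $\bar\pi_f$. Your uniqueness argument---using that optimality of $\pi^*$ together with the DPP makes the value process $Y_t$ a martingale, then comparing with the localized It\^o identity from Proposition~\ref{pr:fundid} to force $\mathbb H_{cv}=\mathbb H_{max}$---is more detailed than anything the paper supplies and is the natural way to close the gap the paper leaves open; the one point you flag (that $Y_t$ is a true martingale, not merely a supermartingale) is indeed standard once the DPP of Proposition~\ref{PROP_DYN_PROG_INF_RET} is available.
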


{
\begin{remark}\label{rm:newuniquenessbis}
Observe that, similarly to the case $\gamma\in (0,1)$ (see Remark \ref{rm:newuniqueness}), also in this case from the proof the above Proposition \ref{pr:fundid}, we easily get that the fundamental identity \eqref{EXP_VER_INF_RET_III} holds when, in place of $\bar v$, we put any classical solution $v$ of the HJB equation \eqref{eq:HJB1} which satisfies ($\tau$ being the first exit time from $\calh_{++}$),
$$
\lim_{T\to + \infty}\E\left[e^{-(\rho + \delta) (T \wedge \tau) }
v\big(W_{\pi}(T \wedge \tau), X(T \wedge \tau)\big) \right]=0.
$$
This would provide a one-side comparison result also in this case.
However, due to the asymmetry explained at the beginning of this subsection, this condition may not be easy to check, so such comparison result may be less interesting here.
Moreover, as we said in Remark \ref{rm:newuniqueness},
we do not expect here uniqueness to hold, in general. This doe not affect the solution of our problem.
\end{remark}
}

\section{Discussion and extensions}\label{SE:DISCUSSION}
Going back to the portfolio choice problem presented in Section~\ref{Problem formulation}, we recall that the pair $(X_0(t),X_1(t))$ can be identified with $(y(t),y(t+s)_{s \in [-d,0]})$, where $y(t)$
denotes labor income at time $t$, and
$y(t+s)_{s \in [-d,0]}$
is the path of labor income between time $t$ and $d$ units of time in the past.
We can then summarize the results of the previous sections in the following theorem,
{\color{black} which
also states the results for the original problem \eqref{DEF_OBJECTIVE FUNCTION_DEATH TIMEbar}
by considering the original reference filtration $\mathbb G$ and the admissible control space $\overline \Pi$ which is defined as the set of all controls in $\overline \Pi_0$ whose pre-death counterpart satisfies the state constraint
\eqref{NO_BORROWING_WITHOUT_REPAYMENT_CONDITIONLA_MEAN}.}
\begin{theorem}\label{TEO_MAIN_INF_RET}
	The value function $V$ of Problem \ref{Problem1} is given by
	\begin{equation}
	V(w,x_0,x_1) =  \frac{ f_{\infty}^{\gamma} \left(
		w + g_{\infty} x_0 + \int_{-d}^0 h_{\infty}(s) x_1(s) \,ds
		\right)^{1-\gamma} }
	{1-\gamma},
	\end{equation}
	where $f_{\infty}$ is defined in (\ref{EXPRESSIONS_f_INF_RET})
and $\left(g_{\infty},h_{\infty}\right)$ in (\ref{DEF_g_infty_h_infty}).
Moreover for every $(w,x) \in \R\times M_2$
there exists a unique optimal strategy $\pi^*=(c^*,B^*,\theta^*)\in \Pi_0$ starting at $(w,x)$.
Such strategy can be represented as follows.
Denote total wealth by
\vspace{-0.3truecm}
\begin{equation}\label{Gamma-infty2}
  \Gamma_{\infty}^* (t): =W^*(t) + g_{\infty}y(t)+ \int_{-d}^0 h_{\infty}(s) y(t+s) \,ds,
\vspace{-0.3truecm}
  	\end{equation}
where $W^*(\cdot)$ is the solution of equation (\ref{DYN_W_X_INFINITE_RETIREMENT_II}) with initial datum $w$ and control $\pi^*$, whereas $y(\cdot)$ is the solution of the second equation in (\ref{DYNAMICS_WEALTH_LABOR_INCOME}) with datum $x=(x_0,x_1)\in M_2$.
Then, $\Gamma^*_\infty$ has  dynamics
	\begin{align}\label{DYN_GAMMA*_PB1}
	\begin{split}
	d  \Gamma_{\infty}^* (t) =& \Gamma_{\infty}^* (t) \Big(  r + \delta +\frac{\kappa^\top \kappa}{\gamma}
	- f_{\infty}^{-1}\big( 1 
	+\delta k^{-b}\big) \Big)dt
	+  \frac{\Gamma_{\infty}^* (t)}{\gamma } \kappa^\top dZ(t),
	\end{split}
	\end{align}
and the optimal strategy triplet $\pi^*=(c^*,B^*,\theta^*)$
for Problem \ref{Problem1} is given by
	\begin{align}\label{OPTIMAL_STRATEGIES_RET_INF}
	\begin{split}
	c^{*}(t)&:=  
	f_{\infty}^{-1}  \Gamma_{\infty}^*(t) {,}  \\
	B^{*}(t)&:=k^{ -b } f_{\infty}^{-1}  \Gamma_{\infty}^*(t)      {,} \\
	\theta^{*}(t)&:=
\frac{\Gamma_{\infty}^*(t)}{ \gamma}
(\sigma^\top)^{-1}\kappa
- g_{\infty}y(t)(\sigma^\top)^{-1} \sigma_y.
	\end{split}
	\end{align}
{\color{black} Finally, there exists a unique $\mathbb G$-adapted optimal strategy $\overline \pi^*=(\overline c^*,\overline B^*,\overline \theta^*)\in \overline\Pi$ coinciding with $\pi^*=(c^*,B^*,\theta^*)\in \Pi$ on $\{\tau_{\delta} \ge t\}$. The optimal controls in the original filtration $\mathbb G$ are given by
$$
\overline c^{*}(t)= 1_{\{\tau_{\delta}\ge t\}} c^{*}(t),
\qquad \overline B^{*}(t)= 1_{\{\tau_{\delta}\ge t\}} B^{*}(t),
\qquad
\overline \theta^{*}(t)= 1_{\{\tau_{\delta} \ge t\}} \theta^{*}(t),
$$
with associated total and financial wealth given by
$$
\overline\Gamma^*(t)= 1_{\{\tau_{\delta}\ge t\}} \Gamma^{*}(t),
\qquad
\overline W^*(t)= 1_{\{\tau_{\delta}\ge t\}} W^{*}(t),
$$
respectively.}
\end{theorem}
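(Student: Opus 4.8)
The plan is to assemble the statement from results already established: essentially everything is contained in Section~\ref{SECTION_INFINITE_tau_R}, and the only genuinely new ingredient is the passage from the pre-death formulation (Problem~\ref{Problem1}, with $\mathbb F$-predictable controls and the state constraint that $(W(t),X(t))\in\calh_+$) back to the original problem posed over the enlarged filtration $\mathbb G$. First I would invoke Theorem~\ref{th:VERIFICATION_THEOREM_INF_RET} (for $\gamma\in(0,1)$) and Theorem~\ref{th:VERIFICATIONgamma>1} (for $\gamma>1$); since by \eqref{eq:HPkgammarho} these two ranges exhaust the admissible values of $\gamma$, in both cases $V=\bar v$ on $\calh_+$, the feedback map $\left(\textbf{C}_f,\textbf{B}_f,\Theta_f\right)$ of \eqref{EQ_DEF_FEEDBACK_MAP} is an optimal feedback map, and $\bar\pi_f=(\bar c_f,\bar B_f,\bar\theta_f)$ is the unique optimal strategy for Problem~\ref{Problem1}. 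Reading off the explicit form of $\bar v$ from \eqref{EQ_GUESS_BAR V_INF_RET} and of $\bar\pi_f$ from \eqref{EQ_FEEDBACK_STRATEGIES_INF_RET}, and recalling from Subsection~\ref{SSE:MARKOV} the identification of $X(t)=(X_0(t),X_1(t))$ with $\big(y(t),y(t+s)_{s\in[-d,0]}\big)$, yields at once the displayed formula for $V(w,x_0,x_1)$, the expression \eqref{OPTIMAL_STRATEGIES_RET_INF} for $\pi^*=(c^*,B^*,\theta^*)$, and the definition \eqref{Gamma-infty2} of total wealth $\Gamma_\infty^*$. The dynamics \eqref{DYN_GAMMA*_PB1} is then merely a rewriting of \eqref{DYN_H^*_INFTY} in Lemma~\ref{PROP_DYNAMIC_H_INF_RET} (using $\tfrac1\gamma\kappa^\top\kappa=\tfrac{\kappa^\top\kappa}{\gamma}$), which also exhibits $\Gamma_\infty^*$ as a geometric Brownian motion, hence $\P$-a.s.\ strictly positive when started in $\calh_{++}$ --- the positivity already used to prove admissibility of $\bar\pi_f$ in Proposition~\ref{pr:Vfinitegamma>1} and Theorem~\ref{th:VERIFICATION_THEOREM_INF_RET}.

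The substantive part is the translation to $\mathbb G$. Here I would argue that the map $\pi=(c,B,\theta)\mapsto\overline\pi=(\overline c,\overline B,\overline\theta)$ defined by $\overline c(t)=1_{\{\tau_\delta\ge t\}}c(t)$, $\overline B(t)=1_{\{\tau_\delta\ge t\}}B(t)$, $\overline\theta(t)=1_{\{\tau_\delta\ge t\}}\theta(t)$ (as in \eqref{eq:barproc}) sends $\Pi^0$ into the set of $\mathbb G$-predictable controls of $\overline\Pi^0$, and that, conversely, by \eqref{eq:GFpred} every $\mathbb G$-predictable control in $\overline\Pi^0$ agrees on $[0,\tau_\delta]$ with one coming from an $\mathbb F$-predictable $\pi\in\Pi^0$, unique up to $dt\otimes\P$-null sets on $[0,\tau_\delta]$. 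Because the no-borrowing constraint \eqref{NO_BORROWING_WITHOUT_REPAYMENT_CONDITIONLA_MEAN} is phrased in terms of the pre-death processes, this correspondence matches $\overline\Pi$ with $\Pi(w,x_0,x_1)$; and because $\tau_\delta$ is independent of $Z$ and exponentially distributed, the objective functional of the original problem in \eqref{DEF_OBJECTIVE FUNCTION_DEATH TIMEbar} equals $J$ of \eqref{DEF_J_INF_RET} evaluated at the corresponding pre-death control, which is exactly the reduction recalled between \eqref{DEF_OBJECTIVE FUNCTION_DEATH TIMEbar} and \eqref{OBJECTIVE_FUNCTION}. Hence the original problem over $\overline\Pi$ and Problem~\ref{Problem1} over $\Pi(w,x_0,x_1)$ have the same value and their optimizers correspond bijectively under $\pi\leftrightarrow\overline\pi$; applying this to the unique $\pi^*$ found above produces a $\mathbb G$-adapted optimal $\overline\pi^*=(\overline c^*,\overline B^*,\overline\theta^*)\in\overline\Pi$, given componentwise by $1_{\{\tau_\delta\ge t\}}\pi^*(t)$, coinciding with $\pi^*$ on $\{\tau_\delta\ge t\}$ and unique among $\mathbb G$-adapted optimizers. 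The displayed formulas for $\overline\Gamma^*$ and $\overline W^*$ then follow by multiplying the state equation \eqref{DYN_W_X_INFINITE_RETIREMENT_II} by $1_{\{\tau_\delta\ge t\}}$, once more via \eqref{eq:barproc}.

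I expect the only delicate point to be making the correspondence between control sets fully precise: checking that admissibility (i.e.\ the state constraint) and the value of the objective are both preserved in the two directions, and pinning down the exact sense --- ``up to $dt\otimes\P$-null sets on $[0,\tau_\delta]$'' --- in which $\overline\pi^*$ is the unique $\mathbb G$-adapted optimal strategy. Everything else is bookkeeping on top of Theorems~\ref{th:VERIFICATION_THEOREM_INF_RET}--\ref{th:VERIFICATIONgamma>1} and Lemma~\ref{PROP_DYNAMIC_H_INF_RET}.
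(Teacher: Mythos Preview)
Your proposal is correct and mirrors the paper's own treatment: Theorem~\ref{TEO_MAIN_INF_RET} is presented there as a summary of the results of Section~\ref{SECTION_INFINITE_tau_R} (Theorems~\ref{th:VERIFICATION_THEOREM_INF_RET} and~\ref{th:VERIFICATIONgamma>1}, Lemma~\ref{PROP_DYNAMIC_H_INF_RET}, the identification of $X$ with $(y,y(\cdot+s))$) together with the $\mathbb F\leftrightarrow\mathbb G$ correspondence set up in Section~\ref{Problem formulation} via \eqref{eq:GFpred}--\eqref{eq:barproc}, and no separate proof is given. Your identification of the one substantive step---the bijection between admissible control sets and the invariance of the objective under $\pi\leftrightarrow\overline\pi$---is exactly right.
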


To understand the optimal solution, we note that quantity
\eqref{Gamma-infty2} represents the agent's total wealth, given by the sum of financial wealth and human capital. In line with \cite{bodie1992labor,DYBVIG_LIU_JET_2010}, the agent considers the capitalized value of future wages as if they were a traded asset.
The solution follows the logic of Merton \cite{MERTON}, in that the agent chooses constant fractions of total wealth to consume and leave as bequest. The same would apply to the risky assets allocation if the agent's labor income were uncorrelated with the financial market. As it is instead instantaneously perfectly correlated with the risky assets, a negative income hedging demand arises
(the term $- g_{\infty}y(t)(\sigma^\top)^{-1} \sigma_y$) reducing the allocation to the risky assets accordingly  (\cite{CV}).
The riskier the human capital, the less aggressive the agent's asset allocation.

We look now at the relation
with the benchmark model with no delay, i.e. when $\phi=0$.
First, we  observe that the dynamics of $\Gamma^*_\infty$ is not influenced by the path dependent component of the model in the sense that, {\em ceteris paribus}, changing $\phi$ (and hence $y$) leaves the dynamics of the total wealth unchanged at each time.
Then, calling $\widetilde \Gamma_\infty$ the total wealth
when $\phi=0$, we have, for any initial point $(w,x)$
$$
\Gamma^*_\infty(0)-\widetilde\Gamma_\infty(0)
=
x_0\left(\frac{1}{\beta-\beta_\infty}-\frac{1}{\beta}\right)
+ \<h_\infty, x_1\>
$$
so that we have
$$
\Gamma^*_\infty(t)-\widetilde\Gamma_\infty(t)
=
\left[
x_0\left(\frac{1}{\beta-\beta_\infty}-\frac{1}{\beta}\right)
+ \<h_\infty, x_1\>
\right]
e^{\left(r+\delta +\frac{\kappa^\top \kappa}{\gamma}(1-(2\gamma)^{-1})
- f_{\infty}^{-1}\big(1+\delta k^{-b}\big)\right)t
+\frac{\kappa^\top}{\gamma}Z(t)}.
$$
This means that, when $\beta_\infty\ge0$
and $\<h_\infty, x_1\>\ge 0$
(which are both true when $\phi (s) \ge 0$
for every $s \in [-d,0]$),
the total capital and hence the optimal consumption level $c^*$ and bequest target $B^*$
are larger than what they would be in the non path-dependent case (i.e. when $\phi\equiv 0$). This is a consequence of the predictable, past component of labor income shaping human capital and hence total wealth.
The situation is less clear cut for the risky asset allocation $\theta^*$, as
there is a complex interplay between risk preferences and financial market parameters. Indeed,
denoting by
$\Theta_\phi$ the feedback map associated with  $\theta^*$ (see \eqref{EQ_DEF_FEEDBACK_MAP}), we have
\begin{align}\label{OPTIMAL_STRATEGIES_RET_INF2newbis}
\Theta_\phi(w,x)-\Theta_0(w,x)
=(\sigma^\top)^{-1}\left[
\left(\frac{1}{\beta-\beta_\infty}-\frac1\beta\right)
x_0 \left(\frac\kappa\gamma - \sigma_y\right)
+ \<h_\infty, x_1\>\frac\kappa\gamma
\right].
\end{align}
The result suggests a very rich set of empirical predictions on risky asset allocations depending on risk preferences, financial market parameters, and the relative contribution of the past vs. future component of human wealth. In the special case of $\beta_{\infty}=0$, for example, we have that  the wedge between $\Theta_\phi$ and $\Theta_0$ is entirely driven by the past component of human capital, as the negative hedging demand appearing in both  $\Theta_\phi$ and $\Theta_0$ only depends on the present component of human capital, and not on the capitalized market value of the labor income's past trajectory. The latter can tilt the asset allocation above or below the baseline optimum resulting in the case of no path dependency.

It is clear that our results could provide even richer empirical predictions in more realistic settings. The introduction of a fixed retirement date,\footnote{See \cite{DYBVIG_LIU_JET_2010} for a model with endogenous retirement date but without path-dependency.} for example, would allow the relative importance of the
past vs. future component of labor income to change as the retirement date approaches. This could generate a hump shaped pattern in the risky asset allocation, which would be consistent with empirical evidence often treated as a puzzle or more recently explained by assuming stock prices to be cointegrated with labor income
(\cite{BENZONI_ET_AL_2007}). The model discussed here could then offer an interesting way to reconcile theory and empirical observation within a tractable setting. The solution of the finite horizon version of the model is the object of current further research.



\section*{Acknowledgments}
The authors are grateful to Sara Biagini, Salvatore Federico, Beniamin Goldys, Margherita Zanella for useful comments and suggestions.
{The authors are also grateful to two anonymous referees whose careful scrutiny helped to improve the paper.}




\end{document}